\newcommand{\ee}{\varepsilon}
\newcommand{\G}{\mathbb{G}}
\newcommand{\M}{\mathbb{M}}
\newcommand{\p}{\mathbb S}
\newcommand{\SFL}{(-\Delta)^s_{\mathrm{SFL}}}
\newcommand{\RFL}{(-\Delta)^s_{\mathrm{RFL}}}
\newcommand{\CFL}{\mathrm{CFL}}
\newcommand{\Ls}{\ensuremath{\mathrm{L}}}
\newcommand{\E}{\ensuremath{\mathrm{E}}}
\newcommand{\D}{\ensuremath{\mathrm{D}}}
\newcommand{\J}{\ensuremath{\mathrm{J}}}
\newcommand{\asympT}{\overset  t \asymp}
\newcommand{\Green}{\mathcal{G}}
\newcommand{\Heat}{\mathcal{H}}
\newcommand{\Martin}{\mathcal{M}}
\newcommand{\Semi}{\mathcal{S}}
\DeclareMathOperator{\esssup}{ess\,sup}
\DeclareMathOperator{\essinf}{ess\,inf}
\newcommand{\re}{\mathbb{R}}
\newcommand{\ren}{\mathbb{R}^n}
\DeclareMathOperator{\sign}{sign}
\newcommand{\angles}[1]{\left\langle#1\right\rangle}
\newcommand{\set}[1]{\left\{#1\right\}}
\newcommand{\abs}[1]{\left\lvert#1\right\rvert}
\newcommand{\norm}[2][]{\left\|#2\right\|_{#1}}
\newtheorem{proposition}{Proposition}[section]
\newtheorem{theorem}[proposition]{Theorem}
\newtheorem{lemma}[proposition]{Lemma}
\theoremstyle{definition}
\newtheorem{example}[proposition]{Example}
\newtheorem{remark}[proposition]{Remark}
\newtheorem*{theorem*}{Theorem}
\numberwithin{equation}{section}
\newcommand*\diff{\mathop{}\!\mathrm{d}}
\newcommand{\defeq}{\vcentcolon=}
\newcommand{\defbyeq}{=\vcentcolon}
\newcommand{\email}[1]{\href{mailto:#1}{#1}}
\title{Singular solutions for fractional parabolic \\ boundary value problems}
\author{%
	Hardy Chan\thanks{Dept. of Mathematics, ETH Z\"urich.
	\email{hardy.chan@math.ethz.ch}}
\and
David G\'omez-Castro\thanks{Instituto de Matemática Interdisciplinar,
	Universidad Complutense de Madrid.
	\email{dgcastro@ucm.es}}
\and	
Juan Luis V\'{a}zquez\thanks{Depto. de Matem\'aticas
	Univ. Aut\'onoma de Madrid.
	\email{juanluis.vazquez@uam.es}}
}
\begin{document}
	\maketitle
	
\begin{abstract}
The standard problem for the classical heat equation posed in a bounded domain $\Omega$ of $\ren$  is the initial and boundary value problem. If the Laplace operator is replaced by a version of the fractional Laplacian, the initial and boundary value problem can still be solved on the condition that the non-zero boundary data must be singular, i.e., the solution $u(t,x)$ blows up as $x$ approaches $\partial \Omega$ in a definite way. In this paper we construct a theory of existence and uniqueness of solutions of the parabolic problem with singular data taken in a very precise sense, and also admitting  initial data and a forcing term. When the boundary data are zero we recover the standard fractional heat semigroup. A general class of integro-differential operators may replace the classical fractional Laplacian operators, thus enlarging the scope of the work.

As further results on the spectral theory of the fractional heat semigroup, we show that a  one-sided Weyl-type law holds in the general class, which was previously known for the restricted and spectral fractional Laplacians, but is new for the censored (or regional) fractional Laplacian. This yields bounds on the fractional heat kernel. \\ 

\noindent \textbf{Keywords:} fractional Laplacians, parabolic PDE, singular solution, initial-boundary value problem, heat kernel

\noindent \textbf{2010 Mathematics Subject Classification:} 35S16, 35K67, 35D30, 35C15, 35K08
\end{abstract}

    \tableofcontents
\section{Introduction}

If we consider the classical heat equation posed in a bounded domain $\Omega$ of $\ren$ with  $n >2s$, a standard problem is the initial and boundary value problem with Dirichlet data. The theory goes in parallel with a similar theory for the elliptic Laplace--Poisson problem. When the Laplace operator is replaced by a version of the fractional Laplacian in the elliptic problem, Abatangelo and collaborators \cite{Abatangelo2015, Abatangelo2015b, Abatangelo2017a, abatangelo+gc+vazquez2019} made clear that standard boundary values are not accepted in the theory and must be replaced by singular values. Not only that, the singular values have to be specified in a very precise way. A well-posed problem follows in that case.

Two questions immediately arise from the elliptic studies, in particular \cite{abatangelo+gc+vazquez2019}: whether this singular behaviour is preserved in the parabolic problem, and what is the corresponding theory. We will try to give a satisfactory answer to those questions in this paper.

\subsection{General Evolution Problem}
 Let us state the evolution problem under study in its general formulation. First, we have an evolution equation
\begin{equation}	\label{eq:main}
		u_t + \Ls u = f{(t,x)}, \qquad \mbox{ for \ }  \   x\in\Omega,\  t\in(0,T),
\end{equation}
driven by an operator $\Ls$ belongoing to  a wide class that includes the usual fractional Laplacian versions. We also need  admissible boundary conditions to be satisfied by our solutions. They are formulated in terms of the boundary operator $\E$ that originated in the elliptic theory:
\begin{equation}	\label{eq:sing.bc}
\E u(t,\zeta) = h(t,\zeta), \qquad \mbox{ for \ }  \ \zeta \in \partial \Omega, t>0.
\end{equation}
We will examine the operator $\E $ below. Depending on the operator $\Ls$, we will need to impose an exterior boundary condition, which we will always take as homogeneous:
\begin{equation}	\label{eq:sing.ext}
u(t,x)  = 0, \qquad \mbox{ for \ }  \ x \in \Omega^c, \ t>0.
\end{equation}
Finally, we need to impose initial data as usual:
\begin{equation}	\label{eq:init}
u(0,x) = u_0(x), \qquad  \mbox{ for \ }  \ x \in \Omega.
\end{equation}

The form of the admissible boundary condition  is a consequence of the already established elliptic theory, since we want the solutions of the corresponding elliptic problems to appear as stationary solutions of the evolution problem. Consequently, we define the singular boundary condition as the limit
	\begin{equation}
	\label{eq:E definition}
		\E u (t,\zeta) \defeq \lim_{\substack{x \to \zeta \\ x \in \Omega}} \frac{u(t,x)}{u^\star (x)}
	\end{equation}
	where $u^\star$ is a particular known function, which typically exhibits a boundary behaviour of type $\delta(x)^{2s - \gamma  -1}$, where $\delta$ denotes the distance-to-the-boundary function $\delta(x) = \text{dist} (x, \Omega^c)$. Hereafter, the parameters $s\in(0,1)$ and $\gamma\in(0,1]$ determine respectively the interior and boundary behaviour of $\Ls$, see \eqref{eq:G estimate}.
	
	We call \eqref{eq:E definition} singular boundary data because of the values taken for the Restricted Fractional Laplacian (RFL) and Spectral Fractional Laplacian (SFL), where $2s - \gamma - 1 < 0$. However, for the Censored (or Regional) Fractional Laplacian (CFL), $2s - \gamma - 1 = 0$ and this is a Dirichlet type condition. As mentioned in \cite{abatangelo+gc+vazquez2019}, no examples are known to satisfy $2s-\gamma-1>0$. In this last setting, $\E u = 0$ seems to be a redundant condition $u = 0$ in $\partial \Omega$, which calls into question if the Green operator $\Green$,
	introduced in \eqref{greenG}, comes from a reasonable direct operator $\Ls$. Nevertheless, as in \cite{abatangelo+gc+vazquez2019} we will include a mathematical framework for these problems.

\subsection{Main assumptions, results and plan of the paper}

	We will denote the solution of  equation \eqref{eq:main} with initial-and-boundary conditions \eqref{eq:sing.bc}, \eqref{eq:sing.ext} and \eqref{eq:init}  by $\Heat[u_0, f, h]$ or, more precisely, by $\Heat_{\Ls} [u_0, f, h]$.
	Although it is a bulky notation, we have decided to use it for the sake of precision.

	We will prove that under general assumptions on the Green's operator $\Green$ for the elliptic problem
$\Ls v = f$, we  can construct a well-posed theory of  so-called  {\sl weak-dual solutions} for the parabolic problem.
	As in \cite{abatangelo+gc+vazquez2019}, we assume the following hypothesis which will be made precise as we introduce the involved quantities:
	\begin{itemize}
		\item  \eqref{eq:G estimate}: two-sided estimates for the elliptic Green's function
		\item \eqref{eq:G delta Linf to delta C}: boundary regularity of the elliptic Green's operator
		\item \eqref{eq:S submarkovian}: the elliptic differential operator generates a submarkovian semigroup (see all three in \Cref{sec.ellprl})
		\item \eqref{eq:Green normal}: existence of $\gamma$-normal derivative of the elliptic Green's function (see \Cref{ssec.32})
	\end{itemize}
Our main result reads:
\begin{theorem}
	\label{thm:main}
	Assume \eqref{eq:G estimate}, \eqref{eq:G delta Linf to delta C}, \eqref{eq:S submarkovian} and \eqref{eq:Green normal}. Given $u_0\in L^1(\Omega,\delta^\gamma)$, $f\in L^1(0,T;L^1(\Omega,\delta^\gamma))$ and $h\in L^1((0,T)\times\partial\Omega)$, the Problem \eqref{eq:main}-\eqref{eq:sing.bc}-\eqref{eq:sing.ext}-\eqref{eq:init} has a unique solution $\Heat[u_0,f,h]\in L^1(0,T;L^1(\Omega,\delta^\gamma))$ given in \eqref{eq:general form of the solution}, understood in the weak-dual sense \eqref{eq:weak phi}. Moreover, the solution operator is continuous in the sense that \eqref{eq:L1 estimate} holds.
\end{theorem}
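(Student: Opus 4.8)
The plan is to prove the theorem in three stages. First, show that the explicit expression in \eqref{eq:general form of the solution} — which combines the action of the fractional heat semigroup $\Semi(t)$ on the initial datum, a Duhamel integral for the forcing term $f$, and a boundary term built from the $\gamma$-normal derivative of the Green's function acting on $h$ — is a well-defined element of $L^1(0,T;L^1(\Omega,\delta^\gamma))$ satisfying the continuity estimate \eqref{eq:L1 estimate}. Second, verify that this expression solves the problem in the weak-dual sense \eqref{eq:weak phi}. Third, prove that the weak-dual formulation admits at most one solution. Throughout, the four structural hypotheses divide cleanly: the two-sided Green estimate \eqref{eq:G estimate} and the boundary regularity \eqref{eq:G delta Linf to delta C} control the mapping properties of $\Green$ and of $\Semi(t)$ on the weighted space $L^1(\Omega,\delta^\gamma)$; the submarkovian property \eqref{eq:S submarkovian} supplies the $L^1$-contractivity that bounds the semigroup and Duhamel terms; and the existence of the $\gamma$-normal derivative \eqref{eq:Green normal} is what makes both the boundary term and the trace operator $\E$ meaningful.

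For existence, I would bound the three terms separately. The semigroup term $\Semi(t)u_0$ is handled by first recording that $\Semi(t)$ extends to a bounded operator on $L^1(\Omega,\delta^\gamma)$ — which follows from the heat-kernel bounds derived from \eqref{eq:G estimate} and \eqref{eq:S submarkovian}, or from writing the semigroup in duality with $\Green$ — and then integrating in $t\in(0,T)$. The Duhamel term $\int_0^t \Semi(t-\tau)f(\tau)\,d\tau$ is controlled by Minkowski's integral inequality together with that boundedness, yielding a bound by $C\,T\,\|f\|_{L^1(0,T;L^1(\Omega,\delta^\gamma))}$. The boundary term, which has the schematic form of $\Semi(t-\tau)$ applied to $\M[h(\tau)]$ corrected by its stationary counterpart — or, to avoid any time regularity of $h$, written directly as a convolution against the associated parabolic Poisson kernel — is estimated using \eqref{eq:G delta Linf to delta C} and the kernel bound for the $\gamma$-normal derivative of $\Green$ from \eqref{eq:Green normal}, giving the bound by $\|h\|_{L^1((0,T)\times\partial\Omega)}$. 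Summing the three gives \eqref{eq:L1 estimate}. It then remains to insert the formula into \eqref{eq:weak phi}: one tests against $\Green[\psi]$ times a time cutoff (or against the dual backward elliptic solution), applies Fubini — legitimate by the $L^1$ bounds just obtained — and uses the adjointness relations between $\Green$, $\Semi(t)$ and $\M$, in particular that $\E\Green[\psi]=0$ and that the adjoint of $\M$ is the $\gamma$-normal derivative. The semigroup and Duhamel contributions reproduce the $u_0$ and $f$ terms by the standard parabolic duality computation, while the boundary contribution collapses, via the Green-type identity for the $\gamma$-normal derivative, precisely to the term $\int_0^T\!\int_{\partial\Omega} h\cdot(\text{normal derivative of the test function})$.

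For uniqueness, take the difference $u$ of two solutions, which solves the homogeneous problem ($u_0=0$, $f=0$, $h=0$) weakly-dually. Testing \eqref{eq:weak phi} against $\Green[\psi]$ with $\psi$ in a dense test class and a cutoff $\eta\in C_c^\infty(0,T)$ shows that $t\mapsto\int_\Omega\Green[u(t)]\,\psi\,dx$ is absolutely continuous and identically zero; since this holds for all $\psi$ in a dense class and $\Green$ is injective on $L^1(\Omega,\delta^\gamma)$ (again by \eqref{eq:G estimate}), we conclude $u(t)=0$ for a.e.\ $t$. Equivalently, once $h=0$ the formulation reduces to the weak-dual formulation of the pure fractional heat semigroup in $L^1(\Omega,\delta^\gamma)$, whose well-posedness, and hence uniqueness, is classical under \eqref{eq:S submarkovian}.

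The main obstacle I anticipate is the boundary term. Two difficulties compound there. First, $h$ is only assumed integrable in time, so the naive ansatz $u(t)=\M[h(t)]+(\text{correction})$ — which would require $\partial_t h$ — is unavailable, and one must work with a representation of the boundary contribution that is intrinsically first-order in $h$, then identify it as the right object by approximation with smooth boundary data. Second, both the kernel of the $\gamma$-normal derivative of $\Green$ and the reference function $u^\star$ are singular at $\partial\Omega$, so the Fubini manipulations and the passage to the limit $x\to\zeta$ defining $\E$ in \eqref{eq:E definition} must be justified quantitatively, using precisely the estimates \eqref{eq:G delta Linf to delta C} and \eqref{eq:Green normal} rather than soft compactness arguments. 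The semigroup and Duhamel parts, by contrast, are routine once the $L^1(\Omega,\delta^\gamma)$-boundedness of $\Semi(t)$ has been established.
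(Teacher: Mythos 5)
Your three-stage plan---well-definedness with the a priori estimate, verification of the weak-dual identity, then uniqueness---matches the paper's architecture, but the internal arguments differ at each stage and one of them has a real gap.

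For uniqueness, the paper does something both shorter and sharper than your absolute-continuity/$\Green$-injectivity route: in \eqref{eq:weak phi} one simply takes $\phi$ to be (the time-reversal of) $\sign(u)\,\varphi_1\in\delta^\gamma L^\infty((0,T)\times\Omega)$. The left-hand side then becomes $\int_0^T\!\int_\Omega|u|\varphi_1$, and the right-hand side is controlled by the bounds of \Cref{thm:continuity of H}, yielding the estimate \eqref{eq:L1 estimate} and uniqueness in one stroke (\Cref{thm:uniqueness}). Your absolute-continuity claim would need separate justification, and testing against $\Green[\psi]$ times a time cutoff is the formulation \eqref{eq:weak psi}, which the paper proves is strictly \emph{weaker} than \eqref{eq:weak phi} (\Cref{thm:weak form equivalence}); uniqueness in that smaller test class would not by itself give the theorem as stated.

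The genuine gap is in your bound for the boundary term. You propose to estimate $\Heat[0,0,h]$ using ``the kernel bound for the $\gamma$-normal derivative from \eqref{eq:Green normal},'' but \eqref{eq:Green normal} concerns the elliptic kernel $\D_\gamma\G$, not the parabolic kernel $\D_\gamma\p$, and the pointwise parabolic bound that is available, $\D_\gamma\p(t,\zeta,y)\le C(t)\delta(y)^\gamma$ with $C(t)\sim t^{-n/(2s-\gamma)}$ (\Cref{thm:S estimate}), is \emph{not} integrable near $t=0$ when $n>2s$. A naive iterated-kernel estimate therefore diverges. The correct mechanism is duality against the first eigenfunction: since $\int_\Omega\D_\gamma\p(t,\zeta,y)\varphi_1(y)\,\mathrm{d}y=e^{-\lambda_1 t}\D_\gamma\varphi_1(\zeta)$, Tonelli gives $\int_0^T\!\int_\Omega|\Heat[0,0,h]|\varphi_1\le C\|h\|_{L^1((0,T)\times\partial\Omega)}$, which is precisely the smoothing encoded in \Cref{thm:continuity of H}. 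You also take for granted that $\p$ and $\D_\gamma\p$ exist with this regularity; in the paper this is a substantial step, resting on the one-sided Weyl law (\Cref{thm:weyl}) and the ultracontractivity results of \Cref{thm:S boundary reg}.

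For existence when $h\ne0$, your direct plug-and-Fubini of formula \eqref{eq:general form of the solution} into \eqref{eq:weak phi} can be made to work (Tonelli for non-negative $h$, $\phi$, then linear decomposition), provided the finiteness issues above are resolved via the $\varphi_1$-duality. The paper instead constructs the solution by approximating $h$ with interior forcings $f_j$ concentrating to $\partial\Omega$ as in \eqref{eq:concentration to boundary f_j}, uses the uniform integrability of \Cref{lem:uniform integrability} plus Dunford--Pettis to extract a weak $L^1(\delta^\gamma)$ limit, passes to the limit in the weak identity established for $h=0$ (\Cref{thm:main h = 0}), and only then identifies the limit with the formula by Fubini. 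That route reuses the zero-boundary theory verbatim and delivers the required compactness as a by-product; your route is more direct but has to secure the same integrability estimates by hand.
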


\noindent A word about notation. We denote by  $L^1(\Omega,\delta^\gamma)$ the space of functions $u\in L^1_{loc}(\Omega)$ such that
 $$
 \int_{\Omega}|u(x)|\delta(x)^\gamma \diff x <\infty.
 $$
Similarly, we define
\[
L^1(0,T;L^1(\Omega,\delta^\gamma))
=\set{
    u\in L^1_{\rm loc}((0,T)\times \Omega):
    \int_0^T\int_{\Omega}
        |u(x)|\,\delta(x)^\gamma
    \diff x\diff t<\infty}.
\]
After recalling some elliptic preliminaries, in \Cref{sec:semigroup} we will use the theory of dissipative operators to construct the heat semigroup $\Semi(t)$ defined by $\Semi(t)[u_0] = \Heat[u_0,0,0]$. In \Cref{sec:introduction to singular boundary data} we give some intuition on why and how  singular boundary data are allowed into the theory.
In \Cref{sec:weyl} we provide a one-sided Weyl-type law for such operators, which is used in \Cref{sec:heat kernel existence and estimates}
to show that the heat semigroup is regularising, it is in fact given by a kernel
with good estimates.

\Cref{sec:general form} is devoted to propose an explicit candidate of solution  $\Heat[u_0,f,h]$ for Problem \eqref{eq:main}--\eqref{eq:init}, that is given in terms of the heat kernel $\p$ defined in \eqref{eq:Semi given kernel}. We analyse the admissible data, and obtain basic  estimates. 
	In \Cref{sec:weak formulation} we give a precise definition of weak-dual solution, for which we show uniqueness. We devote \Cref{sec:existence} to show that our candidate of solution is precisely this unique solution and that \Cref{thm:main} holds.
	
	In \Cref{sec:elliptic and parabolic} we address the question of agreement between the elliptic and parabolic theories. Thus, we prove that when $f$ and $h$ do not depend on $t$, then $\Heat[u_0,f,h]$ converges as $t \to \infty$ to the solution of the corresponding elliptic problem studied in \cite{abatangelo+gc+vazquez2019}.
	
To conclude, we examine in \Cref{sec:examples} the practical application of the above general theory. In particular, we show that the theory applies to the three classical fractional Laplacian examples: RFL, SFL and CFL, by checking that these operators satisfy the  set of hypotheses required by our general setting.

\section{Elliptic preliminaries}\label{sec.ellprl}
We recall some facts from the elliptic theory as developed by Abatangelo et al. in \cite{abatangelo+gc+vazquez2019}. First, we go over the theory where only zero boundary data are taken into account (we often refer to it as standard theory). Thus, it is proved that for a large family of operators $\Ls$, the classical solution of
	\begin{equation*}
	\begin{dcases}
	\Ls v= f,  & \text{ in }\Omega, \\
	v = 0, & \text{ in }\Omega^c,
	\end{dcases}
	\end{equation*}
	can be written by an integral against the Green kernel	
\begin{equation}\label{greenG}
	\Green[f] (x) \defeq v(x) = \int_\Omega \G (x,y) f(y) \diff y.
\end{equation}
This kernel has  very precise properties.
	In the class of operators under consideration, for all $0 \le f \in L^\infty_c (\Omega)$ the solutions behave near the boundary in a power-like way
	\begin{equation*}
	v \asymp \delta^\gamma, \qquad    \delta(x)=\mbox{dist}(x,\partial \Omega),
	\end{equation*}
with a certain constant $\gamma \in (0,1]$ that depends on the operator. See the classical fractional Laplacian examples in Section \ref{sec:examples}.	We assume that $\G$ is symmetric and has the following estimates outside the diagonal,
	\begin{align}\label{eq:G estimate}	\tag{H$_1$}
		\G(x,y) = \G(y,x) \asymp |x-y|^{-(n-2s)} \left( \frac{\delta(x) \delta(y)}{|x-y|^2} \wedge 1 \right)^{\gamma}.
	\end{align}
	Under this assumption, it was shown in \cite{abatangelo+gc+vazquez2019} that the maximal class of data $f$ admissible for the operator is the space $L^1 (\Omega, \delta^\gamma)$, which only depends on the parameter $\gamma$.
	
On the other hand, Bonforte et al.\  \cite{bonforte+figalli+vazquez2018} showed that there exists an orthonormal {basis of $L^2 (\Omega)$ consisting of eigenfunctions $\varphi_m$} of $\Ls$. We write this in terms of the inverse $\Green[\varphi_m] = \lambda_m^{-1} \varphi_m$ 	where the sequence of eigenvalues $0 < \lambda_1 < \lambda_2 \le \cdots$ diverges to infinity in the usual way. In \Cref{sec:weyl} we include a Weyl-type law that specifies a rate for this divergence.

Let us use the notation $\delta^\gamma X \defeq \{ u=\delta^\gamma v :  v \in X  \}$ endowed with the norm $\norm[\delta^\gamma X]{u}=\norm[X]{u/\delta^\gamma}$.  With the additional assumption that		\begin{equation}	\tag{H$_2$}
	\label{eq:G delta Linf to delta C}
	\Green: \delta^\gamma L^\infty (\Omega) \to \delta^\gamma C(\overline \Omega) \text{ is continuous},
	\end{equation}
	it is known that the eigenfunctions are bounded functions such that
	\begin{equation}
	\label{eq:regularity of the eigenf}
		\varphi_1 \asymp \delta^\gamma, \qquad \varphi_k \in \delta^\gamma C (\overline \Omega).
	\end{equation}	
	Through this eigen-decomposition, we have the so-called  Mercer's condition on the kernel
	\begin{equation}
		\label{eq:Mercer}
		\int_\Omega \int_\Omega \G(x,y) f(x) f(y)  \diff y \diff x = \int_\Omega \Green[f]f = \sum_{k=1}^\infty
		\lambda_k^{-1}
		\angles{f,\varphi_k}^2 \ge 0 , \qquad \forall f \in L^2(\Omega).
	\end{equation}
	Here and below, we denote by $\langle \cdot, \cdot \rangle$ the scalar product on $L^2 (\Omega)$,
	\begin{equation*}
		\angles{u,v} = \int_\Omega u(x) v(x) \diff x.
	\end{equation*}

\section{Semigroup theory}
\label{sec:semigroup}
 We start our evolution study by the simplest  theory when both $f=0$ and $h = 0$, and only nontrivial initial data $u_0$ are considered. Then we present a natural class solutions that forms a continuous semigroup that creates the basis for our further studies. We  will denote it as	$\Semi(t) [u_0]$, i.e.,
\begin{align*}
	\Semi(t) [u_0] (x) \defeq \Heat_\Ls[u_0,0,0] (t,x) .
	\end{align*}
We  construct this heat semigroup $\Semi(t) : L^2 (\Omega) \to L^2 (\Omega)$ from the eigen-decomposition
	\begin{equation}
	\label{eq:heat semigroup}
		\Semi(t) [u_0] = \sum_{k=1}^\infty e^{-\lambda_k t} \langle u_0 , \varphi_k \rangle \varphi_k.
	\end{equation}
This is the classical form of the of solution of \eqref{eq:main}--\eqref{eq:init}  when $u_0 \in L^2 (\Omega), f = 0, h = 0$. It satisfies the equation in the spectral sense. A definition of solution of the general problem will be given later; we are satisfied with this formal definition for the moment. Agreement between the present semigroup definition and the general theory to be developed later will come at the proper place.
	
	\begin{remark}
		\label{rem:S self-adjoint}
		There are some properties that follow from \eqref{eq:heat semigroup}. First, $\Semi(t)$ is a well-defined self-adjoint linear operator in $L^2(\Omega)$, namely
		\begin{equation*}
		\int_\Omega \Semi(t) [f] g
 = \sum_{k=1}^\infty  e^{-\lambda_k t} \langle f, \varphi_k \rangle \langle g, \varphi_k \rangle
= \int_\Omega f \Semi(t)[g].
		\end{equation*}
		Second,  it easily follows that for all $f \in L^2 (\Omega)$,
		\begin{equation}
		\label{eq:G from S}
		\Green [f] =
		\sum_{k=1}^\infty \lambda_k^{-1} \langle f, \varphi_k \rangle \varphi_k =
		 \int_0^\infty \Semi(t) [f] \diff t.
		\end{equation}
		In the following subsections we check that $\Semi(t)$ is, in fact, a semigroup in various functional settings.
	\end{remark}

\subsection{\texorpdfstring{$L^2$}{L2} theory}
	
	\begin{proposition}
		The family $\Semi(t)$ defined by \eqref{eq:heat semigroup} is a continuous  non-expansive  semigroup in $L^2 (\Omega)$. Furthermore,
		\begin{equation*}
			\| \Semi(t) [u_0] \|_{L^2 (\Omega)} \le e^{-\lambda_1 t} \|u_0 \|_{L^2 (\Omega)}.
		\end{equation*}
	\end{proposition}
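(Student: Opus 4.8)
The plan is to work directly from the spectral representation \eqref{eq:heat semigroup} and exploit orthonormality of $\{\varphi_k\}$ in $L^2(\Omega)$. First I would establish the claimed bound: expanding $u_0 = \sum_k \langle u_0,\varphi_k\rangle \varphi_k$ in $L^2(\Omega)$, Parseval gives $\|u_0\|_{L^2(\Omega)}^2 = \sum_k \langle u_0,\varphi_k\rangle^2$, and since $\Semi(t)[u_0]=\sum_k e^{-\lambda_k t}\langle u_0,\varphi_k\rangle \varphi_k$ is again an orthonormal expansion, Parseval again yields
\[
\|\Semi(t)[u_0]\|_{L^2(\Omega)}^2 = \sum_{k=1}^\infty e^{-2\lambda_k t} \langle u_0,\varphi_k\rangle^2 \le e^{-2\lambda_1 t}\sum_{k=1}^\infty \langle u_0,\varphi_k\rangle^2 = e^{-2\lambda_1 t}\|u_0\|_{L^2(\Omega)}^2,
\]
where the inequality uses $0<\lambda_1 \le \lambda_k$ for all $k$ and $t\ge 0$. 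Taking square roots gives the stated estimate, and in particular $\|\Semi(t)\|_{L^2\to L^2}\le e^{-\lambda_1 t}\le 1$, which is the non-expansiveness.

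Next I would verify the semigroup (composition) property $\Semi(t+\tau)=\Semi(t)\Semi(\tau)$ for $t,\tau\ge 0$ and $\Semi(0)=\mathrm{Id}$. The latter is immediate since $e^{0}=1$ and $\sum_k \langle u_0,\varphi_k\rangle\varphi_k = u_0$. For the former, apply $\Semi(t)$ term-by-term to the expansion of $\Semi(\tau)[u_0]$; because $\varphi_k$ is an eigenfunction with $\Semi(t)[\varphi_k]=e^{-\lambda_k t}\varphi_k$ (directly from \eqref{eq:heat semigroup}), one gets $\Semi(t)\Semi(\tau)[u_0] = \sum_k e^{-\lambda_k t}e^{-\lambda_k \tau}\langle u_0,\varphi_k\rangle\varphi_k = \Semi(t+\tau)[u_0]$. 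The interchange of $\Semi(t)$ with the infinite sum is justified by continuity of the bounded operator $\Semi(t)$ on $L^2(\Omega)$, which follows from the bound just proved.

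Finally, for strong continuity $t\mapsto \Semi(t)[u_0]$ at each $t_0\ge 0$, I would estimate, for $t\to t_0$,
\[
\|\Semi(t)[u_0]-\Semi(t_0)[u_0]\|_{L^2(\Omega)}^2 = \sum_{k=1}^\infty \bigl(e^{-\lambda_k t}-e^{-\lambda_k t_0}\bigr)^2 \langle u_0,\varphi_k\rangle^2,
\]
and conclude by dominated convergence for series: each summand tends to $0$ as $t\to t_0$, and is dominated by $4\langle u_0,\varphi_k\rangle^2$, which is summable. (At $t_0=0$ one uses that the summands are bounded by $\langle u_0,\varphi_k\rangle^2$.) This gives continuity in $L^2(\Omega)$. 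The only mild subtlety — and the closest thing to an obstacle — is making the term-by-term manipulations rigorous, i.e. confirming that all the series converge in $L^2(\Omega)$ and that $\Semi(t)$ commutes with the sums; this is handled uniformly by the operator-norm bound $\|\Semi(t)\|_{L^2\to L^2}\le 1$ established in the first step, so no real difficulty arises.
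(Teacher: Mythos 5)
Your proof is correct and follows the same spectral approach as the paper: establish the $L^2$ bound via Parseval, verify the algebraic semigroup property term-by-term, and prove strong continuity from the series representation. The one place you diverge, and in fact improve on the paper, is the continuity argument. The paper attempts the direct estimate
\[
\left\|\Semi(t)[u_0]-u_0\right\|_{L^2(\Omega)}^2=\sum_{k=1}^\infty\bigl(1-e^{-\lambda_k t}\bigr)^2\langle u_0,\varphi_k\rangle^2\le\bigl(1-e^{-\lambda_1 t}\bigr)^2\|u_0\|_{L^2(\Omega)}^2,
\]
and then lets $t\to 0$. But since $\lambda\mapsto 1-e^{-\lambda t}$ is increasing and $\lambda_k\ge\lambda_1$, each factor $(1-e^{-\lambda_k t})^2$ is bounded \emph{below}, not above, by $(1-e^{-\lambda_1 t})^2$; the correct uniform upper bound on the coefficients is $1$, which does not vanish as $t\to 0$. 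Your dominated-convergence argument (each summand $\to 0$ pointwise, all dominated by the summable $\langle u_0,\varphi_k\rangle^2$) avoids this trap and gives a clean proof of strong continuity at every $t_0\ge 0$, not just $t_0=0$. The rest of your write-up (the exponential decay estimate, non-expansiveness, and the composition law via continuity of the bounded operator $\Semi(t)$) matches the paper's reasoning.
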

	\begin{proof}
		First, it is evident that
		\begin{equation*}
		\Semi(t) \Semi(\tau) = \Semi(t+ \tau),
		{\qquad \forall t,\tau\geq 0.}
		\end{equation*}
		Now we show that $\Semi(t) \to I$ strongly in $L^2(\Omega)$:
		\begin{equation*}
		\left \|  \Semi(t)[u_0 ] - u_0 \right \|_{L^2 (\Omega)}^2  = \sum_{k=1}^\infty \left( 1 - e^{-\lambda_k t} \right)^2
		\angles{ u_0 , \varphi_k }^2
		 \le \left( 1 - e^{-\lambda_1 t} \right)^2 \| u_0 \|_{L^2 (\Omega)} .
		\end{equation*}
		Since $e^{-\lambda_1 t} \to 1 $, we have the continuity of the semigroup.
		The estimate is computed similarly.
	\end{proof}

\begin{remark}
	The Green kernel
	can be formally expressed as
	\begin{equation*}
	\G(x,y) =  \sum_{{k=1}}^{\infty} \lambda_k^{-1} \varphi_k (x) \varphi_k (y).
	\end{equation*}
	We expect also that
	\begin{equation}
	\label{eq:Semi given kernel}
	\Semi(t) [u_0] (t,x)
	= \int_{\Omega} \p(t,x,y) u_0 (y) \diff y,
	\end{equation}
	where $\p$ is called the heat kernel, whose existence is justified in \Cref{thm:S boundary reg}. Formally, we can write
	\begin{equation}
	\label{eq:S from G}
	\p(t,x,y) = \sum_{{k=1}}^{\infty} e^{-\lambda_k t} \varphi_k (x) \varphi_k (y).
	\end{equation}
	Since $\Semi(t)$ is self-adjoint, $\p(t,x,y) = \p(t,y,x)$. Through these formulas, it is immediate that
	\begin{equation*}
	\G (x,y) = \int_0^\infty \p(t,x,y) \diff t.
	\end{equation*}
This relation will be justified in \Cref{rmk:rigorous parabolic}.
	\end{remark}

	\begin{remark}
		\label{rem:energy setting}
		As mentioned in \cite{chan+gc+vazquez2020}, in this $L^2$ setting we can define the energy spaces
		\begin{equation*}
			H_\Ls^1 (\Omega) = \set{   u \in L^2 (\Omega) : \sum_{k=1}^\infty \lambda_k \langle u, \varphi_k \rangle ^2 <\infty}, \quad 			H_\Ls^2 (\Omega) = \set{   u \in L^2 (\Omega) : \sum_{k=1}^\infty \lambda_k^2 \langle u, \varphi_k \rangle ^2<\infty} = \Green [L^2 (\Omega)].
		\end{equation*}
		It is easy to see that these are Hilbert spaces with adequate inner products. In the examples, the spaces have been characterised. The dual spaces are made of distributions
		and expressed as
		\begin{equation*}
		H_\Ls^{-1} (\Omega) = \set{   u \in D'(\Omega)  : \sum_{k=1}^\infty \lambda_k^{-1} \langle u, \varphi_k \rangle ^2<\infty},
		\qquad 			
		H_\Ls^{-2} (\Omega) = \set{   u \in D'(\Omega) : \sum_{k=1}^\infty \lambda_k^{-2} \langle u, \varphi_k \rangle ^2<\infty} .
		\end{equation*}
		The heat semigroup $\Semi(t)$ is still well defined in this setting.
	\end{remark}

\subsection{\texorpdfstring{$L^\infty$}{L infty} theory}\label{ssec.32}
	The usual assumption on the operator $\Ls$ is that it is submarkovian, namely
	\begin{equation}
	\label{eq:S submarkovian}
	\tag{H$_3$}
	0\le u_0 \le 1 \implies 0 \le \Semi(t) [u_0]  \le 1 , \qquad \forall  t \geq 0 .
	\end{equation}
Notice that, since
\begin{equation*}
0\le \frac{u_0 (x) - \essinf u_0}{\esssup u_0 - \essinf u_0 } \le 1, \quad \text{ a.e. } x \in \Omega  \qedhere
\end{equation*}
we have that \eqref{eq:S submarkovian} is equivalent to
\begin{equation}
\essinf u_0 \le \Semi(t) [u_0] \le \esssup u_0, \qquad \forall t \ge 0.
\end{equation}
If these properties hold, $\Semi(t)$ is positivity-preserving and non-expansive on $L^\infty(\Omega)$, namely
\begin{equation}\label{eq:max prin}
u_0 \geq 0
\quad \Longrightarrow \quad
\Semi(t)[u_0]\geq 0,
\qquad \forall t\geq 0,
\end{equation}
\begin{equation}\label{eq:L infty nonexpansive}
|u_0| \leq 1
\quad \Longrightarrow \quad
\bigl|\Semi(t)[u_0]\bigr| \leq 1
\qquad \forall t\geq 0.
\end{equation}
In particular, this assumption guarantees that $\p\geq0$ (see \Cref{thm:S boundary reg})
and a more useful version of \eqref{eq:L infty nonexpansive}:
\begin{equation}
\label{eq:abs value comparison}
\left| \Semi(t) [u_0] \right| \le \Semi(t) [ |u_0| ].
\end{equation}
This estimate will be fundamental in our theory below.
The symmetry assumption \eqref{eq:G estimate} implies that $\Semi(t)$ is a symmetric Markov semigroup.

\subsection{\texorpdfstring{$L^1$}{L1} theory}
The sub-markovian condition \eqref{eq:S submarkovian} also implies $\Semi(t):L^1(\Omega)\longrightarrow L^1(\Omega)$ is continuous and non-expansive. Indeed, using also \eqref{eq:abs value comparison} and the self-adjointness of  $\Semi(t)$, we have for any $u_0\in L^1(\Omega)$,
\begin{equation}
\label{eq:S L1 cont}
\int_{\Omega}
    |\Semi(t)[u_0](x)|
\diff x
\leq
\int_{\Omega}
    \Semi(t)[|u_0|](x)
\diff x
=\int_{\Omega}
    |u_0(x)|\Semi(t)[1](x)
\diff x
\leq
\int_{\Omega}
    |u_0|
\diff x.
\end{equation}
Just as in the classical setting, such duality estimates will be useful in the subsequent theory.

\subsection{\texorpdfstring{$L^p$}{Lp} theory}

    Since $\Semi(t)$ is non-expansive on $L^\infty(\Omega)$ and $L^1(\Omega)$, it is immediate by Riesz--Thorin interpolation theorem that $\Semi(t):L^p(\Omega) \longrightarrow  L^p(\Omega)$ defines a continuous and non-expansive semigroup for $1<p<\infty$. Moreover, since we know later from \Cref{thm:S boundary reg} that $\Semi(t)$ is ultracontractive,
    i.e. $\Semi(t):L^2(\Omega) \longrightarrow  L^\infty(\Omega)$ is continuous for any $t>0$.
    Due to \cite[Theorem 2.1.5]{Davies1989}, $\Semi(t)$ is compact on $L^p(\Omega)$ for all $1\leq p \leq \infty$. However, we will not use this fact in the rest of the paper.

	\subsection{\texorpdfstring{$L^1 (\Omega, \delta^\gamma)$}{L1(Omega,delta gamma)} theory}

   	In \cite{abatangelo+gc+vazquez2019}, the authors prove that,  from $\G(x,y) \ge c \delta(x)^\gamma \delta(y)^\gamma$,
   	we have the lower-Hopf-type inequality for $f \ge 0$,
   	\begin{equation*}
   	\Green[f] (x) \ge c \delta(x)^\gamma \int_\Omega f(y) \delta(y)^\gamma \diff y.
   	\end{equation*}
   	This allows them to conclude that $L^1 (\Omega, \delta^\gamma)$ is the optimal set of functional data. Since the operator $\Semi(t)$ is self-adjoint in $L^2(\Omega)$ (see \Cref{rem:S self-adjoint}),
   we have the weaker information that
   	\begin{equation*}
   	\int_\Omega \Semi(t) [u_0] \varphi_1 \diff x = e^{-\lambda_1 t} \int_\Omega u_0 \varphi_1 \diff x,
        \qquad \forall u_0\in L^2(\Omega).
   	\end{equation*}
   	Therefore, for $u_0 \ge 0$ we have that
   	 $u_0 \in L^1 (\Omega, \delta^\gamma)$ if and only if
   	 $\Semi(t)[u_0] \in  L^1 (\Omega, \delta^\gamma)$ for any $t > 0$.
   	In \Cref{rem:Hopf for t large} we show that if $u_0 \notin L^1 (\Omega, \delta^\gamma)$ then $\Semi(t) [u_0] \equiv +\infty$ for $t$ large enough. While we do not know if the same happens in general for small times, we show in \Cref{sec:examples} that for the model operators
    this holds for all $t>0$. Therefore, the sensible set of optimal functional data is, as in the elliptic case, $u_0 \in L^1 (\Omega, \delta^\gamma)$.
   		
   	Since we want to work in the  setting of semigroups associated to an infinitesimal generator (see, e.g., \cite{Pazy1983}) we %
   formalise the functional setting of our infinitesimal generator, and %
   apply the well-known Hille--Yosida theorem (see, e.g. \cite[Chapter 1, Theorem 3.1]{Pazy1983}), as we recall as follows:
   \begin{theorem*}[Hille--Yosida]
   A linear (unbounded) operator $A$ in a Banach space $X$ is the infinitesimal generator of a $C_0$ semigroup of contractions %
   if and only if
   \begin{enumerate}
   \item [(i)] $A$ is closed and $\overline{D(A)}=X$.
   \item [(ii)] The resolvent set of $A$ contains $(0,+\infty)$ and for every $\lambda>0$, the resolvent operator $\J_\lambda=(\lambda I-A)^{-1}$ has operator norm bounded by $       \norm[]{\J_\lambda}
       \leq 1/\lambda.$
   \end{enumerate}
   \end{theorem*}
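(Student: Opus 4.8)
The plan is to reproduce the classical Yosida-approximation argument, which is the proof given in \cite{Pazy1983}; since the statement is standard I only indicate the main steps. For the \emph{necessity} of (i)--(ii), assume $A$ generates a $C_0$ contraction semigroup $(T(t))_{t\ge0}$. For density of $D(A)$ one uses that the averages $\tfrac1t\int_0^tT(s)x\diff s$ belong to $D(A)$ and converge to $x$ as $t\downarrow0$ by strong continuity; for closedness one lets $n\to\infty$ in the identity $T(t)x_n-x_n=\int_0^tT(s)Ax_n\diff s$ along a sequence with $x_n\to x$ and $Ax_n\to y$. For $\lambda>0$ one defines the Laplace transform $R_\lambda x\defeq\int_0^\infty e^{-\lambda t}T(t)x\diff t$; it converges absolutely since $\norm{T(t)}\le1$, obeys $\norm{R_\lambda}\le1/\lambda$, and a short computation with the semigroup law gives $(\lambda I-A)R_\lambda=I=R_\lambda(\lambda I-A)$ on $D(A)$, so $(0,\infty)\subset\rho(A)$ with $\J_\lambda=R_\lambda$.

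For \emph{sufficiency}, set $\J_\lambda=(\lambda I-A)^{-1}$ and introduce the bounded Yosida approximations $A_\lambda\defeq\lambda A\J_\lambda=\lambda^2\J_\lambda-\lambda I$. The first step is to check that $\lambda\J_\lambda x\to x$ as $\lambda\to\infty$ for every $x\in X$: this is immediate on $D(A)$ from $\lambda\J_\lambda x-x=\J_\lambda Ax$ together with $\norm{\J_\lambda}\le1/\lambda$, and then follows for general $x$ by density using the uniform bound $\norm{\lambda\J_\lambda}\le1$; consequently $A_\lambda x\to Ax$ on $D(A)$. Next, the bounded semigroups $T_\lambda(t)\defeq e^{tA_\lambda}$ are contractions, because $\norm{e^{tA_\lambda}}\le e^{-\lambda t}\,e^{t\lambda^2\norm{\J_\lambda}}\le1$. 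Since the resolvents, hence the operators $A_\lambda$ and the semigroups $T_\lambda$, mutually commute, the telescoping identity $T_\lambda(t)x-T_\mu(t)x=\int_0^tT_\mu(t-s)T_\lambda(s)(A_\lambda-A_\mu)x\diff s$ yields $\norm{T_\lambda(t)x-T_\mu(t)x}\le t\,\norm{(A_\lambda-A_\mu)x}$ for $x\in D(A)$, uniformly on bounded $t$-intervals; density together with the uniform contractivity extends this to all $x\in X$, so $T(t)x\defeq\lim_{\lambda\to\infty}T_\lambda(t)x$ exists and defines a $C_0$ semigroup of contractions.

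It remains to identify its generator $B$ with $A$: letting $\lambda\to\infty$ in $T_\lambda(t)x-x=\int_0^tT_\lambda(s)A_\lambda x\diff s$ for $x\in D(A)$ gives $T(t)x-x=\int_0^tT(s)Ax\diff s$, so dividing by $t$ and letting $t\downarrow0$ shows $x\in D(B)$ with $Bx=Ax$; since $\lambda I-A$ maps $D(A)$ bijectively onto $X$ and, by the necessity part applied to $B$, so does $\lambda I-B$, we conclude $A=B$. The step I expect to be most delicate is the Cauchy estimate for the family $(T_\lambda(t))_\lambda$, which relies on the mutual commutativity of the Yosida approximations together with their uniform contractivity; everything else reduces to routine bookkeeping with strongly continuous operator families.
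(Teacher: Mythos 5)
Your proof is correct and complete. Note that the paper does not give a proof of this theorem at all: it is quoted verbatim as a classical result with a reference to Pazy's book (Chapter~1, Theorem~3.1), and the Yosida-approximation argument you reconstruct is precisely the one found there, so your attempt matches the intended source.
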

   	
   	The infinitesimal operator includes somehow the boundary or exterior conditions, and we will call it $-A$ rather that $\Ls$ to avoid confusion so that problem is the ODE
   	\begin{equation*}
   		u' = A u.
   	\end{equation*}
   	Since $\Green$ is injective, we can define
   	\begin{equation}
   	D(A) = \Green (L^1 (\Omega, \delta^\gamma)) , \qquad Au \defeq - \Green^{-1} [u],\ \forall u \in D(A).
   	\end{equation}
   	This is to say that $Au$ is the unique element $f \in L^1 (\Omega, \delta^\gamma)$ such that $u = -\Green[f]$. This is operator is linear and possibly unbounded. Due to the characterisation given by the weak-dual formulation
   	\begin{equation*}
   	\langle - Au , \Green [\psi] \rangle = \langle u , \psi \rangle, \qquad \forall \psi \in L_c^\infty (\Omega).
   	\end{equation*}
   	we know that this operator is closed, i.e. if $u_n \to u$ and $Au_n \to v$ in $L^1 (\Omega, \delta^\gamma)$, then $v = A u$. Since $\varphi_k \in D(A)$, we know that $\overline{D(A)} = L^1 (\Omega, \delta^\gamma)$.
   	
   	\begin{remark}
   		Notice that if $u \in \Green (L^2 (\Omega))$ then
   		\begin{equation*}
   			Au = - \sum_{k=1}^\infty \lambda_k  \langle u , \varphi_k \rangle \varphi_k.
   		\end{equation*}
   	\end{remark}
   	
   	\medskip
   	
   	The second hypothesis
   of the Hille--Yosida theorem is that for every $\lambda > 0$ the resolvent $\J_\lambda = ( \lambda I - A)^{-1}$ is a bounded linear operator such that
   	\begin{equation*}
   		\| \J_\lambda [f] \|  \le \frac 1 \lambda \| f \|.
   	\end{equation*}
   	for some norm of $L^1 (\Omega, \delta^\gamma)$.
   	Notice that
   	\begin{equation*}
   	\J_\lambda[f] \text{ is the unique solution of }   u +  \lambda \Green[u] = \Green[f].
   	\end{equation*}
   	Due to \eqref{eq:Mercer}, following \cite[Theorem 4.1 and Theorem 4.2]{GC+Vazquez2018}, there exists a unique solution and $ |u| \le \Green (|f|)$. Therefore, there the resolvent is well defined $\J_\lambda : L^1 (\Omega, \delta^\gamma) \to L^1 (\Omega, \delta^\gamma)$. Furthermore, splitting into positive and negative parts and testing each against the first eigenfunction we have that
   	\begin{equation*}
   	\int_\Omega |\J_\lambda [f]| \varphi_1 \le \frac{1}{\lambda + \lambda_1} \int_ \Omega |f| \varphi_1.
   	\end{equation*}
   	Since $\lambda_1 > 0$, the resolvent is contracting in the equivalent norm of $L^1 (\Omega ,\delta^\gamma)$ given by $\| u \| = \int_\Omega |u| \varphi_1$.
   	\normalcolor

    \begin{proposition}
    \label{prop:semigp-L1-def}
    	The family $\Semi(t)$ defined as the unique extension of \eqref{eq:heat semigroup} is a $C_0$ semigroup in $L^1 (\Omega, \delta^\gamma)$.
    \end{proposition}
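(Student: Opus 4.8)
The plan is to apply the Hille--Yosida theorem to the operator $A$ constructed above and then to identify the resulting semigroup with the continuous extension of \eqref{eq:heat semigroup}. Both Hille--Yosida hypotheses have essentially been checked in the preceding paragraphs: $A=-\Green^{-1}$ with $D(A)=\Green(L^1(\Omega,\delta^\gamma))$ is closed (via the weak-dual identity $\langle -Au,\Green[\psi]\rangle=\langle u,\psi\rangle$) and densely defined (each $\varphi_k\in D(A)$ and the span of the $\varphi_k$ is dense in $L^2(\Omega)$, hence in $L^1(\Omega,\delta^\gamma)$ since $\Omega$ is bounded so $L^2(\Omega)\hookrightarrow L^1(\Omega)\hookrightarrow L^1(\Omega,\delta^\gamma)$); and for every $\lambda>0$ the resolvent $\J_\lambda=(\lambda I-A)^{-1}$ solves $u+\lambda\Green[u]=\Green[f]$ uniquely (by \eqref{eq:Mercer} and \cite[Theorems 4.1 and 4.2]{GC+Vazquez2018}) with $\int_\Omega|\J_\lambda[f]|\varphi_1\le(\lambda+\lambda_1)^{-1}\int_\Omega|f|\varphi_1\le\lambda^{-1}\int_\Omega|f|\varphi_1$, i.e. $\|\J_\lambda\|\le1/\lambda$ in the norm $\|u\|:=\int_\Omega|u|\varphi_1$, which is equivalent to the $L^1(\Omega,\delta^\gamma)$-norm because $\varphi_1\asymp\delta^\gamma$. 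Hille--Yosida then produces a $C_0$ semigroup of contractions $(T(t))_{t\ge0}$ on $L^1(\Omega,\delta^\gamma)$ whose generator is $A$.

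It remains to identify $T(t)$ with the extension of \eqref{eq:heat semigroup}. Since $\varphi_k=\Green[\lambda_k\varphi_k]\in D(A)$ with $A\varphi_k=-\lambda_k\varphi_k$, the orbit $t\mapsto e^{-\lambda_k t}\varphi_k$ is a $D(A)$-valued classical solution of $w'=Aw$, $w(0)=\varphi_k$, so by uniqueness of semigroup orbits $T(t)\varphi_k=e^{-\lambda_k t}\varphi_k$; thus $T(t)$ agrees on the dense span of the eigenfunctions with $\Semi(t)$ in the form \eqref{eq:heat semigroup}. On the other hand, for $u_0\in L^2(\Omega)$, using \eqref{eq:abs value comparison}, $\varphi_1>0$, the self-adjointness of $\Semi(t)$ and $\Semi(t)[\varphi_1]=e^{-\lambda_1 t}\varphi_1$,
\[
\|\Semi(t)[u_0]\|
=\int_\Omega|\Semi(t)[u_0]|\,\varphi_1
\le\int_\Omega\Semi(t)[|u_0|]\,\varphi_1
=\int_\Omega|u_0|\,\Semi(t)[\varphi_1]
=e^{-\lambda_1 t}\|u_0\|,
\]
so $\Semi(t)|_{L^2(\Omega)}$ is a $\|\cdot\|$-contraction and has a unique continuous extension to $L^1(\Omega,\delta^\gamma)$; since $T(t)$ is continuous and coincides with $\Semi(t)$ on a dense subspace, $T(t)$ is that extension. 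Hence the unique extension of \eqref{eq:heat semigroup} is $T(t)$, a $C_0$ semigroup with generator $A$.

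The only content beyond the Hille--Yosida setup is this identification, which is also where the most care is needed: one must check the density of the span of the $\varphi_k$ in $L^1(\Omega,\delta^\gamma)$ and apply the uniqueness-of-orbits property correctly in the weighted $L^1$ space. Alternatively one can bypass Hille--Yosida: the contraction estimate displayed above already lets one define $\Semi(t)$ on $L^1(\Omega,\delta^\gamma)$ as the unique continuous extension of the $L^2$ operator, the semigroup law $\Semi(t+\tau)=\Semi(t)\Semi(\tau)$ and $\Semi(0)=I$ pass to the limit by density and continuity, and strong continuity at $t=0$ follows from the $L^2$ strong continuity (already proven) on the dense subspace $L^2(\Omega)$ plus an $\varepsilon/3$ argument using $\|\Semi(t)\|\le1$; this is shorter but does not directly identify the generator.
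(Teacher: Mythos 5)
Your proof is correct and follows essentially the same route as the paper: apply Hille--Yosida to $A=-\Green^{-1}$ (whose closedness, density of domain, and resolvent contraction estimate in the $\int|u|\varphi_1$-norm were set up in the preceding paragraphs), then identify the resulting semigroup with \eqref{eq:heat semigroup} by matching on eigenfunctions and invoking density. You supply a few details the paper leaves implicit — the explicit $\|\cdot\|$-contraction bound for $\Semi(t)$ on $L^2(\Omega)$, and the observation that uniqueness of the continuous extension is what ties $T(t)$ to $\Semi(t)$ — and you also sketch a valid alternative that bypasses Hille--Yosida entirely, at the cost of not identifying the generator; both additions are sound.
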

    \begin{proof}
    Through the Hille--Yosida theorem, there exists a $C_0$ semigroup $\Semi_1(t)$ associated to $A$. For $u_0 =  \sum_{m=1}^M c_k \varphi_k$ the solution is classical and defined by
	\begin{equation*}
		\Semi_1 (t) [u_0] = \sum_{m=1}^M e^{-\lambda_k t} c_k \varphi_k = \Semi(t) [u_0]
	\end{equation*}
	Since the set $\{\sum_{m=1}^M c_k \varphi_k : M \in \mathbb N, c_k \in \mathbb R\}$ is dense in $L^2 (\Omega)$, we have $\Semi_1 (t) = \Semi (t)$ on $L^2 (\Omega)$. This completes the proof.
	\end{proof}
	We conclude this section with a proof of the decay of the weighted norm.
	\begin{proposition}
		\label{prop:decay of L1 delta}
		Let $ u_0 \in L^1 (\Omega, \delta^\gamma)$. Then
		\begin{equation*}
    	 \| \Semi(t) [u_0] \delta^\gamma \|_{L^1 (\Omega)}
    \le C e^{-\lambda_1 t}
        \| u_0 \delta^\gamma  \|_{L^1 (\Omega)}. 
		\end{equation*}
	\end{proposition}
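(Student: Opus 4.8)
The plan is to reduce the weighted estimate to the spectral estimate on $L^2(\Omega)$ already available from \eqref{eq:heat semigroup}, using the equivalence of norms recorded just before \Cref{prop:semigp-L1-def}: on $L^1(\Omega,\delta^\gamma)$ the norm $\|u\|_1 \defeq \int_\Omega |u|\,\varphi_1$ is equivalent to $\|u\,\delta^\gamma\|_{L^1(\Omega)}$, because $\varphi_1 \asymp \delta^\gamma$ by \eqref{eq:regularity of the eigenf}. Hence it suffices to prove $\|\Semi(t)[u_0]\|_1 \le e^{-\lambda_1 t}\|u_0\|_1$, and the constant $C$ in the statement absorbs the two-sided comparison constants between $\varphi_1$ and $\delta^\gamma$.

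First I would establish the estimate in the equivalent norm by splitting and testing against $\varphi_1$, exactly as was done for the resolvent $\J_\lambda$. Write $u_0 = u_0^+ - u_0^-$. Since $\Semi(t)$ is positivity-preserving on $L^1(\Omega,\delta^\gamma)$ (this follows from \eqref{eq:max prin}, which holds in the $L^\infty$ setting and extends to $L^1(\Omega,\delta^\gamma)$ by the density/continuity of \Cref{prop:semigp-L1-def}) and linear, we have $\Semi(t)[u_0^\pm]\ge 0$ and therefore, by the triangle inequality,
\begin{equation*}
\int_\Omega |\Semi(t)[u_0]|\,\varphi_1
\le \int_\Omega \Semi(t)[u_0^+]\,\varphi_1 + \int_\Omega \Semi(t)[u_0^-]\,\varphi_1 .
\end{equation*}
For each non-negative piece $w = u_0^\pm$ I would use the self-adjointness of $\Semi(t)$ together with the eigenrelation $\Semi(t)[\varphi_1] = e^{-\lambda_1 t}\varphi_1$ to get
\begin{equation*}
\int_\Omega \Semi(t)[w]\,\varphi_1 = \int_\Omega w\,\Semi(t)[\varphi_1] = e^{-\lambda_1 t}\int_\Omega w\,\varphi_1 .
\end{equation*}
One subtlety: the identity $\int_\Omega \Semi(t)[w]\varphi_1 = e^{-\lambda_1 t}\int_\Omega w\varphi_1$ was stated earlier only for $w\in L^2(\Omega)$; to push it to $w\in L^1(\Omega,\delta^\gamma)$ I would approximate $w\ge 0$ by the monotone-increasing truncations $w_j = \min(w,j)\mathbf 1_{\{\delta>1/j\}}\in L^2(\Omega)$, apply the $L^2$ identity to each $w_j$, and pass to the limit by monotone convergence on both sides (using positivity-preservation of $\Semi(t)$ so that $\Semi(t)[w_j]$ is also monotone increasing). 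Summing the two pieces gives $\int_\Omega |\Semi(t)[u_0]|\varphi_1 \le e^{-\lambda_1 t}\int_\Omega |u_0|\varphi_1$, i.e.\ the bound in the $\varphi_1$-weighted norm.

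Finally I would translate back: by \eqref{eq:regularity of the eigenf} there are constants $0<c\le C_1$ with $c\,\delta^\gamma \le \varphi_1 \le C_1\,\delta^\gamma$ on $\Omega$, so
\begin{equation*}
c\int_\Omega |\Semi(t)[u_0]|\,\delta^\gamma
\le \int_\Omega |\Semi(t)[u_0]|\,\varphi_1
\le e^{-\lambda_1 t}\int_\Omega |u_0|\,\varphi_1
\le C_1\, e^{-\lambda_1 t}\int_\Omega |u_0|\,\delta^\gamma ,
\end{equation*}
which is the claimed inequality with $C = C_1/c$. I expect the only genuine obstacle to be the approximation argument justifying the eigenfunction testing identity for merely $L^1(\Omega,\delta^\gamma)$ data — everything else is bookkeeping with the comparison $\varphi_1\asymp\delta^\gamma$ and the already-established positivity and self-adjointness of $\Semi(t)$.
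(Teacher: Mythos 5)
Your proof is correct and follows essentially the same route as the paper's: test against $\varphi_1$, use self-adjointness and the eigenrelation $\Semi(t)[\varphi_1]=e^{-\lambda_1 t}\varphi_1$ to handle the non-negative case, reduce the signed case via positivity, and approximate to pass from $L^2(\Omega)$ to $L^1(\Omega,\delta^\gamma)$. The only cosmetic differences are that the paper uses the one-line bound $|\Semi(t)[u_0]|\le\Semi(t)[|u_0|]$ from \eqref{eq:abs value comparison} in place of your $u_0=u_0^+-u_0^-$ decomposition (these are the same positivity argument), and in the final approximation step the paper simply invokes the already-established $L^1(\Omega,\delta^\gamma)$-continuity of $\Semi(t)$ from \Cref{prop:semigp-L1-def} rather than running a monotone-truncation argument — both are valid, and yours is self-contained at the cost of a few extra lines.
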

	\begin{proof}
		Let $0 \le u_0 \in L^2 (\Omega)$. Then
		\begin{equation*}
			\int_\Omega \Semi(t) [u_0] \varphi_1 = e^{-\lambda_1 t} \int_\Omega u_0 \varphi_1.
		\end{equation*}
		If $u_0 \in L^2 (\Omega)$ changes sign, we apply \eqref{eq:abs value comparison} to show that
		\begin{equation*}
			\int_\Omega | \Semi(t) [u_0] |  \varphi_1
\le \int_\Omega \Semi(t) [|u_0|] \varphi_1
 = e^{-\lambda_1 t} \int_\Omega |u_0| \varphi_1.
		\end{equation*}
		Since $\varphi_1 \asymp \delta^\gamma$, the result is proven in this case.
		Any $u_0 \in L^1 (\Omega, \delta^\gamma)$ can be approximated by a sequence $u_{0,k} \in L^2 (\Omega)$. Since $\Semi(t)$ is continuous in $L^1 (\Omega, \delta^\gamma)$, we conclude the result.
	\end{proof}
	\normalcolor

	\subsection{Duhamel's formula}
	 In the setting of $u_0=0$, $f\neq 0$, $h=0$,  we can use the classical formula by Duhamel to solve the problem with a forcing term:
	\begin{equation}
	\label{eq:Duhamel}
		\Heat[0,f,0] (t,x) = \int_0^t \Semi(t-\sigma) [f(\sigma, \cdot)] (x) \diff \sigma,
	\end{equation}
	 or more explicitly,
	\begin{equation*}
			\Heat[0,f,0] (t,x) = \int_0^t \int_\Omega \p (t-\sigma, x,y) f(\sigma, y) \diff y \diff \sigma.
	\end{equation*}
    Due to the admissible data for the semigroup it is natural to request that $f(t, \cdot) \in L^1 (\Omega, \delta^\gamma)$. It suffices that $f \in L^1(0,T; L^1 (\Omega, \delta^\gamma))$. We provide now some immediate estimates, %
    \[\begin{split}
    |\Heat[0,f,0](t,x)|
    &\leq \Heat[0,|f|,0](t,x)\\
    \int_{\Omega}
        |\Heat[0,f,0](t,x)|\varphi_1(x)
    \diff x
    &\leq
    \int_{0}^{t}
        e^{-\lambda_1(t-\sigma)}
        \int_{\Omega}
            |f(\sigma,y)|\varphi_1(y)
        \diff y
    \diff \sigma.
    \end{split} \]
    Hence,
    \begin{align}
    \int_{\Omega}
        |\Heat[0,f,0](t,x)|\delta(x)^\gamma
    \diff x
    &\leq C
    \int_{0}^{t}
        \int_{\Omega}
            |f(\sigma,y)|\delta(y)^\gamma
        \diff y
    \diff \sigma.
    \end{align}
	We will develop a more detailed theory in \Cref{sec:general form}. Note that no  (singular)  boundary data are considered in this section.

\section{Introduction to singular boundary data}
\label{sec:introduction to singular boundary data}
We are  ready to address the evolution problem involving nontrivial boundary data.

\subsection{Review of the elliptic theory}
	A detailed analysis of the kernel done in \cite{abatangelo+gc+vazquez2019} showed that $\Green$ is defined in $L^1 (\Omega, \delta^\gamma)$ but cannot be extended to a larger set of functions in a reasonable way.
	Due to \eqref{eq:G delta Linf to delta C}, we know that if $\psi \in \delta^\gamma L^\infty (\Omega)$ and $\varphi = \Green[\psi]$, then the $\gamma$-normal derivative
	\begin{equation*}
		\D_\gamma \varphi (\zeta) \defeq
		 \lim_{\substack{x \to \zeta\\x\in\Omega}} \frac{\varphi(x)}{\delta(x)^\gamma}
	\end{equation*}
 exists for each $\zeta \in \partial \Omega$ and can be taken uniformly in $\zeta$.
	In \cite{abatangelo+gc+vazquez2019}, under the additional assumption that for every $y \in \Omega$ and $\zeta \in \Omega$ there exists a limit
	\begin{equation}
	\label{eq:Green normal}
	\tag{H$_4$}
	\D_\gamma \G (\zeta,y) \defeq \lim_{x \to \zeta} \frac{\G(x,y)}{\delta(x)^\gamma}
	\qquad \forall \zeta \in \partial \Omega,
	\, y\in\Omega,
	\end{equation}
	\normalcolor 	
	the authors showed that the problem
	\begin{equation*}
		\begin{dcases}
		\Ls v= 0,  & \text{ in } \Omega, \\
		\E v = h, & \text{ on } \partial \Omega, \\
		v = 0, & \text{ in } \Omega^c,
		\end{dcases}
	\end{equation*}
	has a unique solution in the sense that
	\normalcolor
	\begin{equation}
	\label{eq:Martin weak formulation}
	\int_\Omega v  \psi  = \int_{\partial \Omega} h \D_\gamma \Green [\psi], \qquad \forall \psi \in \delta^\gamma L^\infty (\Omega),
	\end{equation}
	which we will denote by $v = \Martin[h]$.  Since $\D_\gamma\Green[\psi]\in L^\infty(\partial\Omega)$ due  to \eqref{eq:G delta Linf to delta C}, this equation is well posed if $h \in L^1 (\partial \Omega)$. The function $u^\star$ appearing in \eqref{eq:E definition} is precisely
	\begin{equation*}
		u^\star \defeq \Martin [1].
	\end{equation*}
	If in addition  $h \in C(\partial \Omega)$, the authors prove that $\E [v] = h$ is satisfied in the pointwise sense.
	These
	\normalcolor
	solutions can be obtained from the ``usual'' elliptic problem, by considering a sequence
		\begin{equation*}
	\begin{dcases}
	\Ls v_m= f_m,  & \text{ in } \Omega, \\
	v_m = 0, & \text{ in }\Omega^c,
	\end{dcases}
	\end{equation*}
	for $f_m$ concentrating towards the boundary in the form
	\begin{equation*}
		f_m (x) = \frac{|\partial \Omega|}{|A_m|} \frac {\chi_{A_m} (x)}{\delta(x)^\gamma} h(P_{\partial \Omega } (x)),
	\end{equation*}
	where $A_m$ is the set of points at distance {between $1/m$ and $2/m$}
from $\partial \Omega$, and $P_{\partial \Omega}$ is the orthogonal projection on $\partial \Omega$ given by the tubular neighbourhood problem. The idea is to prove uniform integrability of the sequence $v_m = \Green[f_m]$ against test functions, and pass to the limit in the weak-dual formulation \begin{equation*}
	\int_\Omega v_m \psi = \int_\Omega f_m \Green[\psi] = \frac{|\partial \Omega|}{|A_m|} \int_{A_m} h(P_{\partial \Omega } (x)) \frac {\Green[\psi](x)}{\delta(x)^\gamma} \diff x , \qquad \forall \psi \in \delta^\gamma L^\infty (\Omega).
\end{equation*}
As $m \to +\infty$ we have formally that $\Ls [v]  = 0$, and rigorously that
\begin{equation*}
		\int_\Omega v \psi = \int_{\partial \Omega} h(\zeta) \D_\gamma  {\Green[\psi](\zeta)} \diff \zeta  , \qquad \forall \psi \in \delta^\gamma L^\infty (\Omega).
\end{equation*}
\normalcolor
Letting $\varphi = \Green[\psi]$ this weak formulation is equivalent to the existence of an integration-by-parts formula  satisfied by functions with zero exterior condition (if applicable),
\begin{equation}
\label{eq:integration by parts}
\int_\Omega v  \Ls[ \varphi]   = \int_\Omega \Ls [v] \varphi + \int_{\partial \Omega} \E [v]  \D_\gamma [\varphi].
\end{equation}
This kind of integration-by-parts formula was known for model operators in \cite{Abatangelo2015,Abatangelo2015b,Abatangelo2017a}.

\medskip

	Passing to the limit in \eqref{eq:G estimate}, we have the estimate
	\begin{equation}
    \label{eq:D gamma G estimate}
	\D_\gamma \G (\zeta,y) \asymp |\zeta -y|^{-(n-2s+2\gamma)} \delta(y)^\gamma.
	\end{equation}
	Then, for any $\psi \in L^\infty_c (\Omega)$ there exists $\D_\gamma [\Green [\psi]]$ given by
	\begin{equation*}
	\D_\gamma [\Green[\psi]] (\zeta) = \int_{\Omega} \D_\gamma \G (\zeta, y) \psi(y) \diff y.
	\end{equation*}
	From here on, we simply denote this by $\D_\gamma \Green [\psi]$.
	Applying Fubini's theorem in \eqref{eq:Martin weak formulation} we deduce that
	\begin{equation*}
	\Martin[h]{(x)} = \int_{\partial \Omega} \M (x,\zeta) h(\zeta) \diff \zeta, \qquad \text{ where } \M (x, \zeta) = \D_\gamma \G (\zeta, x).
	\end{equation*}
	Again, this kind of representation was known for some of the examples, see \cite{Abatangelo2015,Abatangelo2015b,Abatangelo2017a}. Note, in particular, that from \eqref{eq:D gamma G estimate} and the homogeneity $n-1$ of the surface measure on $\partial\Omega$, it is not difficult to recover that
	\begin{equation*}
	u^\star \asymp \delta^{2s-\gamma-1}.
	\end{equation*}
	Notice that the exponents involved in $\D_\gamma \G$ and $u^\star$ are related by the numerical relation:
\[
-(n-2s+2\gamma)+\gamma+(n-1)=2s-\gamma-1.
\]

	\subsection{Intuition for the parabolic problem}
    \label{sec:parabolic intuition}

	 As long as the limit
	\begin{equation}
	\label{eq:D gamma S defn}
	{\D_\gamma} \p (t,\zeta,y) \defeq \lim_{ x \to \zeta} \frac{\p(t,x,y) }{\delta(x)^\gamma}
	\end{equation}
 exists uniformly in $\delta^\gamma C(\overline\Omega)$, for any weighted measure data
 \begin{equation*}
 	u_0\in M(\Omega,\delta^\gamma) = \left \{ \mu \text{ measure} :  \int_{\Omega}\delta^\gamma\,d|\mu| < \infty \right \}
 \end{equation*}
	we can write
	\begin{align*}
	\D_\gamma\Semi(t)[u_0](\zeta)
    &= \D_\gamma \Heat [u_0, 0,0 ]  (t,\zeta)
   		= \int_{\Omega} \D_\gamma \p(t,\zeta,y) u_0 (y) \diff y.
	\end{align*}
	Due to \eqref{eq:S submarkovian}, we know that  $\p \ge 0$ and hence $\D_\gamma \p \ge 0$.
	Therefore
	\begin{align}
	\label{eq:D gamma H 0 f 0}
		\D_\gamma \Heat [0,f,0 ] (t,\zeta) &= \int_0^t  \int_{\Omega} \D_\gamma \p(t-\sigma ,\zeta,y) f ( \sigma,y) \diff y \diff \sigma.
	\end{align}
	\begin{remark}
		 In view of  \eqref{eq:S from G} we can formally write
		\begin{equation*}
			\D_\gamma \p(t, \zeta,y) =  \sum_{{m=1}}^{\infty} e^{-\lambda_m t} \D_\gamma \varphi_m (\zeta) \varphi_m (y).
		\end{equation*}
The convergence of this series can be justified by the polynomial growth on the eigenfunctions together with their $\gamma$-normal derivatives (see \eqref{eq:varphi-bound}), as well as the one-sided Weyl's law presented in \Cref{thm:weyl}. 
	\end{remark}
	
	We can repeat this general scheme, and construct
a boundary singular solution when $0\neq h\in C(\partial\Omega)$. Moreover, we see that a boundary Duhamel-type formula holds, see \eqref{eq:Duhamel-boundary}. For more general boundary data, we will use the weak-dual formulation. Define the sequence of compactly supported functions that concentrates towards the boundary data,
	\begin{equation}
	\label{eq:concentration to boundary f_j}
		f_j (t,x) = \frac{|\partial \Omega|}{|A_j|} \frac {\chi_{A_j} (x)}{\delta(x)^\gamma}h(t,P_{\partial \Omega } (x)).
	\end{equation}
	Applying Duhamel's formula for $f_j$ we have
	\begin{equation}
	\label{eq:towards Martin passage to the limit}
	\begin{split}
		u_j (t,x)
        &= \int_0^t \Semi(t-\sigma) [f_j(\sigma, \cdot)] (x) \diff \sigma\\
		&=\int_0^t \int_\Omega  \p(t-\sigma,x,y) f_j(\sigma, y) \diff y \diff \sigma\\
		&=\int_0^t  \frac{|\partial \Omega|}{|A_j|} \int_{A_j}  \p(t-\sigma,x,y) \frac {h(\sigma,P_{\partial \Omega } (y))}{\delta(y)^\gamma}  \diff y \diff \sigma.
	\end{split}
	\end{equation}
	If the right hand side has a limit as $j\to\infty$, we venture to speculate that it is precisely $\Heat[0,0,h]$, i.e.
	\normalcolor
	\begin{equation}\label{eq:Duhamel-boundary}
		\Heat[0,0,h] (t,x) \defeq \int_0^t \int_{\partial \Omega} \D_\gamma \p (t - \sigma,\zeta,x) h(\sigma, \zeta)\diff \zeta \diff \sigma.
	\end{equation}

    For time-independent boundary data, we have

\begin{remark}
	\label{rem:satisfies boundary condition}
Let $h(t,x)\equiv h(x)$ be a given boundary datum. We point out several facts which we write in formal terms:
\begin{enumerate}
	\item
The stationary function $\Martin[h](x)$ solves the initial-boundary problem with $u_0 = \Martin[h](x)$ and boundary data $h$, i.e.
\begin{equation}\label{eq:singular data equality}
\Heat\Big [\Martin[h],0,h \Big ](t,x) = \Martin[h](x)
    \qquad \forall x\in\Omega,\,t>0.
\end{equation}
Indeed, using formally the representation  of the kernels
we have
\[\begin{split}
\Martin[h]
&=\sum_{m=1}^{\infty}
\dfrac{1}{\lambda_m}
\left(
\int_{\partial\Omega}
\D_\gamma\varphi_m(\zeta)h(\zeta)
\diff \zeta
\right)
\varphi_m(x).
\end{split}\]
whereas
\[\begin{split}
\Heat[0,0,h](t,x)
&=\int_{0}^{t}
    \int_{\partial\Omega}
    \sum_{m=1}^{\infty}
        e^{-\lambda_m(t-\sigma)}
        \D_\gamma\varphi_m(\zeta)
        \varphi_m(x)
    h(\zeta)
\diff \zeta
\diff \sigma\\
&=\sum_{m=1}^{\infty}
    \dfrac{1-e^{-\lambda_m t}}{\lambda_m}
    \left(
        \int_{\partial\Omega}
            \D_\gamma\varphi_m(\zeta)h(\zeta)
        \diff \zeta
    \right) \varphi_m (x)\\
\Heat[\Martin[h],0,0](t,x)
&=
\int_{\Omega}
\sum_{m=1}^{\infty}
    e^{-\lambda_m t}
    \varphi_m(x)
    \varphi_m(y)
\sum_{\ell=1}^{\infty}
    \dfrac{1}{\lambda_\ell}
    \varphi_\ell(x)
    \left(
        \int_{\partial\Omega}
            \D_\gamma\varphi_\ell(\zeta)h(\zeta)
        \diff \zeta
    \right)
\diff x\\
&=\sum_{m=1}^{\infty}
    \dfrac{e^{-\lambda_m t}}{\lambda_m}
    \left(
        \int_{\partial\Omega}
            \D_\gamma\varphi_m(\zeta)h(\zeta)
        \diff \zeta
    \right)
    \varphi_m(x).
\end{split}\]
The equality \eqref{eq:singular data equality} will be rigorously proved once we establish uniqueness of solutions.
\item
 As we will show below, $\Heat[ \Martin [h] , 0 , 0] (t,x)  = \Semi(t) [\Martin[h]] \le C \delta^\gamma(x)$, and hence
		\[
		\E \Big [\Heat[0,0,h] (t, \cdot) \Big ] (x) = h(x), \qquad \forall t > 0.
		\]
\item
	In particular, we have $\Heat[u^\star,0,1](t,x) \equiv u^\star$, and $\E\Heat[0,0,1]=\E\Heat[u^\star,0,1]\equiv1$.

\item Notice that the deduction above formally implies that, as $x \to \eta \in \partial \Omega$,
		\begin{equation*}
			\frac{1}{u^\star(x)}\int_0^t \D_\gamma \p(\sigma, \zeta, x) \diff \sigma \to \delta_\eta (\zeta).
		\end{equation*}
		Since we know formally this is true for the Green kernel
		\begin{equation*}
			\frac{1}{u^\star(x)}\int_0^\infty \D_\gamma \p(\sigma, \zeta, x) \diff \sigma  = \frac{\D_\gamma \G(\zeta,x)}{u^\star(x)} = \frac{\M(x, \zeta)}{u^\star(x)}\to \delta_\eta (\zeta),
		\end{equation*}
		 we only would need to show that for any $t > 0$ we have that
		\begin{equation*}
		\frac{1}{u^\star(x)}\int_t^\infty \p(\sigma, \zeta, x) \diff \sigma \to 0.		
		\end{equation*}
		However, this is not the approach we will take. Instead, we will study the boundary behaviors through a weak formulation in \Cref{sec:general form}.
\end{enumerate}
\end{remark}

	\subsection{Boundary estimates}
	\label{sec:boundary estimates}

	The authors of \cite{abatangelo+gc+vazquez2019} prove some boundary estimates of the type
	\begin{equation*}
	\Green[\delta^\beta] \asymp  \begin{dcases}
	\delta^{\beta + 2s}, & \text{ for }\beta  \in (- \gamma - 1,  \gamma - 2s), \\
	\delta^{\gamma} |\ln \delta|,  & \text{ for }\beta  = \gamma -2 s, \\
	\delta^\gamma, & \text{ for }\beta > \gamma - 2s.
	\end{dcases}
	\end{equation*}
 	The lower bound $\beta>-\gamma-1$  of the range is to guarantee that $\delta^\beta \delta^\gamma \in L^1$. As $\beta \to -\gamma - 1$ we recover $\Green[\delta^\beta ] \sim u^\star \in L^1 (\Omega, \delta^\gamma)$. Since, it is easy to check similarly to Step 1 in the proof of \Cref{thm:main h = 0} that $\Green [f] = \Heat[ \Green[f], f, 0]$, we have that
 	\begin{equation*}
 		\Heat[0, \delta^\beta, 0] = \Heat[ \Green[\delta^\beta] ,  \delta^\beta, 0   ] - \Heat[ \Green[\delta^\beta] ,  0, 0   ] = \Green[\delta^\beta] - \Semi(t) [\delta^\beta].
 	\end{equation*}
 	As we will show in \Cref{sec:heat kernel existence and estimates}, $\Semi(t) [\delta^\beta] \le C \delta^\gamma$ for $t > 0$,  the range of exponents of data $f$ and solutions for the parabolic problem seems to coincide with the one in the elliptic theory.
 	This formalises the intuition of \Cref{rem:satisfies boundary condition}.

\section{One-sided Weyl's law}
\label{sec:weyl}

To formalise the intuition in \Cref{sec:parabolic intuition}, we first prove a result on the growth of eigenvalues.
In \cite{Bonforte+Sire+Vazquez2015}, by bounding the Green's function with the Riesz potential in the dual setting, the authors show that our general family of operators satisfies a Sobolev inequality
\begin{equation}\tag{S}\label{eq:L Sobolev}
\text{There exists } C_S> 0 \text{ such that} \qquad C_S \| u \|_{L^{\alpha} (\Omega)}
\leq
\left( \sum_{k=1}^\infty \lambda_k \langle u , \varphi_k \rangle^2 \right)^{\frac 1 2},
\qquad \forall u\in H^1_\Ls (\Omega).
\end{equation}
The constant $C_S$ is usually called the Sobolev constant of $\Ls$.
This result is shown in  \cite[Theorem 7.5]{Bonforte+Sire+Vazquez2015}  under our assumptions  with exponent \ $\alpha = 2^\star =  \tfrac{2n}{n-2s} > 2$.
Notice that the right-hand side is the energy given equivalently by $ \left( \int_\Omega u \Ls u \right)^{\frac 1 2} = \| \Ls^{\frac 1 2} u \|_{L^2 (\Omega)}$ for any $u \in C_c^\infty (\Omega)$.

In the book by Davies \cite{Davies1989} the existence of this kind of Sobolev inequality is directly linked with the integrability of the heat kernel over large times. Our approach here will be slightly different.%

\begin{theorem}\label{thm:weyl}
	Assume \eqref{eq:G estimate}, \eqref{eq:G delta Linf to delta C} and \eqref{eq:S submarkovian}. Then there exists $c > 0$ depending only on $n,s,|\Omega|$ and the Sobolev constant of $\Ls$ on $\Omega$ such that
	\begin{equation}\label{weyl1}
	\lambda_k \ge c
	{k}^{2s/n}.
	\end{equation}
\end{theorem}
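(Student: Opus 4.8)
The plan is to deduce the eigenvalue lower bound from the Sobolev inequality \eqref{eq:L Sobolev} by a standard Faber--Krahn / Rozenblum--Lieb--Cwikel type argument, adapted to the spectral setting. First I would observe that \eqref{eq:L Sobolev}, combined with the self-adjointness and nonexpansiveness of $\Semi(t)$ on $L^2(\Omega)$, yields ultracontractivity: the Sobolev inequality with exponent $\alpha = 2^\star = \tfrac{2n}{n-2s}$ is equivalent (by the Nash--Moser--Varopoulos machinery, e.g. \cite[Theorem 2.4.2]{Davies1989}) to the on-diagonal heat kernel bound
\begin{equation*}
\norm[L^1(\Omega)\to L^\infty(\Omega)]{\Semi(t)} \le C\, t^{-n/(2s)}, \qquad 0 < t \le 1,
\end{equation*}
with $C$ depending only on $n$, $s$ and $C_S$. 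Since the heat kernel $\p(t,x,y)$ exists and is nonnegative (this is exactly \Cref{thm:S boundary reg}, which only uses \eqref{eq:G estimate}, \eqref{eq:G delta Linf to delta C}, \eqref{eq:S submarkovian}), this gives $\p(t,x,x) \le C t^{-n/(2s)}$ pointwise for small $t$.

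Next I would integrate the spectral representation \eqref{eq:S from G} over the diagonal. Using $\sum_k e^{-\lambda_k t}\varphi_k(x)^2 = \p(t,x,x)$ and integrating in $x$ over $\Omega$ (here $\int_\Omega \varphi_k^2 = 1$), we get the trace estimate
\begin{equation*}
\sum_{k=1}^\infty e^{-\lambda_k t} = \int_\Omega \p(t,x,x)\diff x \le C\,|\Omega|\, t^{-n/(2s)}, \qquad 0 < t \le 1.
\end{equation*}
From here the eigenvalue bound is routine: for fixed $k$, bounding the partial sum $k\, e^{-\lambda_k t} \le \sum_{j=1}^k e^{-\lambda_j t} \le C|\Omega|\,t^{-n/(2s)}$ and optimizing in $t$ (taking $t \sim 1/\lambda_k$, which is admissible provided $\lambda_k \ge 1$; for the finitely many small eigenvalues one adjusts the constant) yields $\lambda_k \ge c\, k^{2s/n}$ with $c$ depending only on $n$, $s$, $|\Omega|$ and $C_S$. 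I should be slightly careful that the ultracontractivity bound is only claimed for $t\le 1$, but since $\lambda_k\to\infty$ the relevant times $t\sim 1/\lambda_k$ are eventually small, and the finitely many exceptional indices only change the constant $c$.

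The main obstacle is the first step: one must justify the ultracontractivity estimate $\norm[L^1\to L^\infty]{\Semi(t)} \lesssim t^{-n/(2s)}$ from the Sobolev inequality \eqref{eq:L Sobolev} in a way that is self-contained relative to what the paper has available. The cleanest route is to invoke the abstract equivalence between the logarithmic Sobolev / Nash inequality and the ultracontractive bound for a symmetric submarkovian semigroup, as in \cite{Davies1989}; the Sobolev inequality \eqref{eq:L Sobolev} implies a Nash inequality by interpolating $\norm[L^2]{u}$ between $\norm[L^1]{u}$ and $\norm[L^{2^\star}]{u}$, and a Nash inequality together with the $L^1$-contractivity \eqref{eq:S L1 cont} gives the stated decay of $\norm[L^1\to L^\infty]{\Semi(t)}$ by the classical Nash argument (differentiating $t\mapsto \norm[L^2]{\Semi(t)u}^2$). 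One subtlety worth flagging is that \eqref{eq:L Sobolev} is stated on $H^1_\Ls(\Omega)$ rather than for general $L^2$ functions, but since $\Semi(t)u_0 \in H^1_\Ls(\Omega)$ for every $t>0$ and $u_0 \in L^2(\Omega)$ (by \eqref{eq:heat semigroup}, as $\sum_k \lambda_k e^{-2\lambda_k t}\langle u_0,\varphi_k\rangle^2 < \infty$), the Nash-type differential inequality can be run along the semigroup trajectory without difficulty, and everything else is bookkeeping.
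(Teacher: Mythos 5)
Your overall strategy is the same as the paper's: run a Nash-type argument from the Sobolev inequality \eqref{eq:L Sobolev} to obtain a decay of $\Semi(t)$ between $L^1$ and $L^2/L^\infty$, translate this into a bound on the spectral trace $\sum_k e^{-c\lambda_k t}$, and optimize in $t$. The paper actually carries out the Nash differential inequality by hand (differentiating $\|\Semi(t)[u_0]\|_{L^2}^2$ along the flow, interpolating $L^2$ between $L^1$ and $L^{2^\star}$, and integrating the resulting ODE), and your sketch of how this would go is essentially what they do.

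There is, however, one genuine circularity in your write-up that you need to repair. You cite \Cref{thm:S boundary reg} to justify that the heat kernel $\p(t,x,y)$ exists and that $\p(t,x,x)$ makes sense. But in the paper \Cref{thm:S boundary reg} is proved \emph{using} \Cref{thm:weyl} (its proof begins by observing that \Cref{it:S boundary reg 5} of \Cref{lem:S boundary reg} follows from the Weyl law), so invoking it here begs the question. This is exactly the point the paper is careful about: they explicitly advertise that their argument is carried out ``without assuming that there exists a heat kernel, or that it is smooth.'' Their workaround is to apply the Nash differential inequality to $\Semi(t)[u_{0,j}]$ with $u_{0,j}=|B_{1/j}|^{-1}\chi_{y+B_{1/j}}$, extract a weak $L^2$ limit $K(t,\cdot,y)$ using lower semicontinuity of the norm, identify the limit via the continuity of the $\varphi_k$ (which uses \eqref{eq:G delta Linf to delta C}), and then evaluate $\int_\Omega K(t,\cdot,y)^2 = \sum_k e^{-2\lambda_k t}\varphi_k(y)^2$. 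Note they work with $\sum_k e^{-2\lambda_k t}$ (i.e.\ with $\int_\Omega\int_\Omega \p^2$) rather than with the pointwise diagonal $\p(t,x,x)$, which avoids the measure-zero issue you would otherwise face.

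Your approach would become correct if you replaced the appeal to \Cref{thm:S boundary reg} by the standard fact that an operator bounded from $L^1(\Omega)$ to $L^\infty(\Omega)$ is given by a bounded kernel (Dunford--Pettis / Schwartz kernel for ultracontractive symmetric Markov semigroups), which the Davies machinery delivers on its own without reference to the Weyl law; and if you then replaced the on-diagonal pointwise bound on $\p(t,x,x)$ by the unambiguous quantity $\int_\Omega \p(t/2,x,y)^2\diff y$. With those repairs your route and the paper's are the same argument packaged slightly differently: you lean on \cite[Theorem~2.4.2]{Davies1989} as a black box, while the paper reproduces the relevant portion of the Nash argument explicitly so as not to presuppose kernel existence.

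Two minor remarks. First, the restriction to $t\le 1$ in your ultracontractivity claim is unnecessary (Davies' equivalence holds for all $t>0$), which incidentally removes the need for any separate handling of the finitely many small eigenvalues: the paper simply sets $\lambda_k t = \alpha/(\alpha-2)$ and this is always admissible. Second, your observation that \eqref{eq:L Sobolev} is stated on $H^1_{\Ls}(\Omega)$ but can be run along the trajectory because $\Semi(t)[u_0]\in H^1_{\Ls}(\Omega)$ for $t>0$ is correct and is implicit in the paper's computation of $\tfrac{\partial}{\partial t}\sum_k e^{-2\lambda_k t}\langle u_0,\varphi_k\rangle^2$.
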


\medskip

\begin{remark}  A stronger form of  Weyl's law of the form
\begin{equation}\label{eq:Weyl full}
\lambda_k=(c+o(1))k^{2s/n}
\end{equation}
holds true for the SFL (since it is a direct consequence of the classical Weyl's law) and for the RFL as was shown already by Blumenthal and Getoor in 1959 \cite{BlumGetoor1959}. A short proof was recently found by Geisinger \cite{Geisinger2014}, see also \cite{Frank-Geisinger2016}. Using pseudodifferential methods, Grubb \cite{Grubb2015} showed Weyl's law for general RFL-type operators.  In contrast, we have not found any such result in the literature for the CFL.
Without assuming that there exists a heat kernel, or that it is smooth, our argument is a generalisation of the one in \cite{Cheng1981} (see also \cite{Schoen1994}) for the Laplace--Beltrami operator on manifolds.
\end{remark} 

\begin{lemma}
	Assume:
	\begin{enumerate}
		\item $\Green : L^2 (\Omega) \to L^2 (\Omega)$ is compact,
		\item The heat semigroup is submarkovian in the sense of \eqref{eq:S submarkovian},
		\item The eigenfunctions are continuous up to the boundary, i.e. $\varphi_k \in C(\bar \Omega)$,
		\item A Sobolev inequality \eqref{eq:L Sobolev} holds for $\Ls$ for some exponent $\alpha>2$.
	\end{enumerate}
	Then, for any $k\in\mathbb{N}$, we have that
	\begin{equation}
	\label{eq:Weyl}
	\lambda_k \ge 2 C_S e^{-2}
	\left(  \frac{k}{|\Omega|}  \right)^{{\frac{\alpha - 2}{\alpha}}}.
	\end{equation}
\end{lemma}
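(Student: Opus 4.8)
The plan is to convert the eigenvalue bound into a pointwise bound on the spectral function $e_k(x):=\sum_{j=1}^k \varphi_j(x)^2$ (well defined by hypothesis~(3)) and then integrate. Indeed, orthonormality of $(\varphi_j)$ gives $\int_\Omega e_k(x)\diff x = k$, so it suffices to show $e_k(x)\le C\,(\lambda_k/C_S^2)^{\alpha/(\alpha-2)}$ for a.e.\ $x$; integrating and solving for $\lambda_k$ then produces a bound of the form $\lambda_k\ge c\,C_S^2\,(k/|\Omega|)^{(\alpha-2)/\alpha}$, which is \eqref{eq:Weyl} (the exponent $\tfrac{\alpha}{\alpha-2}$ is exactly what converts the power of $k/|\Omega|$ into $(k/|\Omega|)^{(\alpha-2)/\alpha}$). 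To bound $e_k(x)$ I work with the finite-rank function $u_x:=\sum_{j=1}^k \varphi_j(x)\varphi_j$ (the reproducing kernel of $V_k:=\operatorname{span}\{\varphi_1,\dots,\varphi_k\}$), for which $u_x(x)=e_k(x)=\|u_x\|_{L^2(\Omega)}^2$ by orthonormality and which, by the spectral formula \eqref{eq:heat semigroup}, equals $\Semi(t)[w_t]$ with $w_t:=\sum_{j=1}^k e^{\lambda_j t}\varphi_j(x)\varphi_j$ and $\|w_t\|_{L^2(\Omega)}^2\le e^{2\lambda_k t}e_k(x)$.

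The heart of the argument is an ultracontractivity estimate $\|\Semi(t)\|_{L^2(\Omega)\to L^\infty(\Omega)}\le K(t)$ with $K(t)$ of power type, deduced from the Sobolev inequality \eqref{eq:L Sobolev} together with the $L^1$-contractivity of $\Semi(t)$ (a consequence of \eqref{eq:S submarkovian}; cf.\ \eqref{eq:S L1 cont}) --- either by invoking the general ultracontractivity machinery of \cite{Davies1989} or by running the Nash/Varopoulos argument directly. In the latter form: interpolating \eqref{eq:L Sobolev} with Hölder's inequality $\|w\|_{L^2}\le\|w\|_{L^1}^{\theta}\|w\|_{L^\alpha}^{1-\theta}$, $\theta=\tfrac{\alpha-2}{2(\alpha-1)}$, gives a Nash inequality $\|w\|_{L^2}^{2+4/n'}\le C_S^{-2}\|w\|_{L^1}^{4/n'}\,\mathcal E(w)$ with $n':=\tfrac{2\alpha}{\alpha-2}$ and $\mathcal E(w):=\sum_j\lambda_j\langle w,\varphi_j\rangle^2$; since $\tfrac{d}{dt}\|\Semi(t)u\|_{L^2}^2=-2\mathcal E(\Semi(t)u)$ (immediate from \eqref{eq:heat semigroup}) and $\|\Semi(t)u\|_{L^1}\le\|u\|_{L^1}$, the standard ODE comparison for $\phi(t):=\|\Semi(t)u\|_{L^2}^2$ yields $\|\Semi(t)\|_{L^1\to L^2}\le\bigl(\tfrac{n'}{4C_S^2 t}\bigr)^{n'/4}$, and by self-adjointness $K(t)=\|\Semi(t)\|_{L^2\to L^\infty}\le\bigl(\tfrac{n'}{4C_S^2 t}\bigr)^{n'/4}$.

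Putting these together, $e_k(x)=u_x(x)\le\|u_x\|_{L^\infty(\Omega)}=\|\Semi(t)[w_t]\|_{L^\infty(\Omega)}\le K(t)\|w_t\|_{L^2(\Omega)}\le K(t)e^{\lambda_k t}e_k(x)^{1/2}$, hence $e_k(x)\le K(t)^2e^{2\lambda_k t}\le \bigl(\tfrac{n'}{4C_S^2 t}\bigr)^{n'/2}e^{2\lambda_k t}$. Minimizing the right-hand side over $t>0$ (the minimum occurs at $t=\tfrac{n'}{4\lambda_k}$, which is where the Euler number and the exponent $n'/2$ enter) gives $e_k(x)\le (e\lambda_k/C_S^2)^{n'/2}$; integrating over $\Omega$ and using $\int_\Omega e_k=k$ together with $2/n'=(\alpha-2)/\alpha$ yields $\lambda_k\ge \tfrac{C_S^2}{e}\,(k/|\Omega|)^{(\alpha-2)/\alpha}$. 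The precise numerical constant in \eqref{eq:Weyl} is then just a matter of keeping track of the constants in the Hölder interpolation and the Nash ODE.

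The step I expect to be the main obstacle is the ultracontractivity estimate, which must be established \emph{without} presupposing that a heat kernel exists (that existence is only proven later, in \Cref{thm:S boundary reg}, using this very law): one has to extract $\|\Semi(t)\|_{L^2\to L^\infty}<\infty$ with the correct power from \eqref{eq:L Sobolev} and the $L^1$/$L^\infty$ bounds of \eqref{eq:S submarkovian} alone. The finite-dimensionality of $V_k$ and the spectral identity for $\tfrac{d}{dt}\|\Semi(t)u\|_{L^2}^2$ make all manipulations legitimate; hypotheses~(1) (compactness of $\Green$) and~(3) (continuity of the $\varphi_k$) enter only to provide, respectively, the discrete eigenbasis and the pointwise meaning of $e_k(x)$. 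If one is content with the qualitative power $k^{2s/n}$ of \Cref{thm:weyl} (taking $\alpha=2^\star$), no sharpening of constants is needed and the argument is essentially immediate once ultracontractivity is in hand.
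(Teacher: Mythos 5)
Your proof is correct, and in fact gives a slightly sharper constant than \eqref{eq:Weyl}: you obtain $\lambda_k\ge C_S^2 e^{-1}(k/|\Omega|)^{(\alpha-2)/\alpha}$, while the paper's computation (carried out carefully; note that the Sobolev inequality \eqref{eq:L Sobolev} as stated has $C_S$ on the left, so the energy dominates $C_S^2\|u\|_{L^\alpha}^2$, meaning the factor in \eqref{eq:Weyl} should really be $2C_S^2e^{-2}$) gives $2C_S^2e^{-2}$, and $C_S^2e^{-1}>2C_S^2e^{-2}$ since $e>2$. The route, however, is genuinely different from the paper's. Both proofs start from the same Nash-type estimate obtained by combining \eqref{eq:L Sobolev} with Hölder and running the ODE $\tfrac{d}{dt}\|\Semi(t)u\|_{L^2}^2=-2\mathcal E(\Semi(t)u)$, and both only need $\|\Semi(t)u\|_{L^1}\le\|u\|_{L^1}$ from \eqref{eq:S submarkovian}. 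But where you push one extra step by self-adjoint duality to get $\|\Semi(t)\|_{L^2\to L^\infty}\le K(t)$ and then apply it to the \emph{finite-rank} reproducing kernel $u_x=\sum_{j\le k}\varphi_j(x)\varphi_j=\Semi(t)[w_t]$ of $V_k$, bounding the partial spectral function $e_k(x)=\sum_{j\le k}\varphi_j(x)^2$ pointwise and integrating, the paper instead stays with the $L^1\to L^2$ decay, concentrates $u_{0,j}\to\delta_y$, extracts a weak $L^2$-limit $K(t,\cdot,y)$, identifies it through $\langle K(t,\cdot,y),\varphi_k\rangle=e^{-\lambda_k t}\varphi_k(y)$ using continuity of the eigenfunctions, and bounds the \emph{full} trace $\sum_{j\ge 1}e^{-2\lambda_j t}\varphi_j(y)^2$ by Parseval and weak lower semicontinuity, finally choosing $\lambda_k t=\alpha/(\alpha-2)$ rather than minimizing exactly. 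Your version avoids the weak-compactness and limit-identification steps entirely (continuity of $\varphi_k$ enters only to make $e_k(x)$ pointwise meaningful, not to identify a limit), at the modest price of the duality step; the paper's version avoids having to make sense of $\Semi(t)$ on $L^\infty$ a priori and produces the $L^2$ heat kernel as a byproduct. Either is a valid implementation of the Cheng--Li/Nash strategy the paper references.
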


\begin{proof}
	Take $u_0 \in L^2$ such that $\| u_0 \|_{L^1} \le 1$.
	Let $u (t, x) = \Semi(t)[u_0](x)$, so that by \eqref{eq:S L1 cont}, we have that
	\[
	\int_{\Omega}|u(t,x)|\diff x\leq 1.
	\]
	Due to the eigen-decomposition, we have that
	\begin{align*}
	\frac{\partial }{\partial t} \int_\Omega |u(t,x)|^2 \diff x  &= \frac{\partial }{\partial t}\sum_{k=1}^\infty e^{-2 \lambda_k t} \langle u_0, \varphi_k \rangle^2 \\
	& = - \sum_{k=1}^\infty 2\lambda_k e^{-2 \lambda_k t} \langle u_0, \varphi_k \rangle^2 \\
	&= - \sum_{k=1}^\infty 2 \lambda_k \langle u(t,\cdot), \varphi_k \rangle^2 \\
	& \le - 2 C_S \left(
	\int_{\Omega}
	|u(t,x)|^{\alpha}
	\diff x
	\right)^{\frac{2}{\alpha}}.
	\end{align*}
	Using Hölder inequality (or equivalently, the interpolation inequality),
	\begin{equation*}
	\int_\Omega |u(t,x)|^2 \diff x
	\le \left(
	\int_\Omega |u(t,x)|^\alpha \diff x
	\right)^{\frac{1}{\alpha-1}}
	\left(
	\int_\Omega |u(t,x)| \diff x
	\right)^{\frac{\alpha - 2}{\alpha-1}}
	\le \left(
	\int_\Omega |u(t,x)|^\alpha \diff x
	\right)^{\frac{1}{\alpha-1}}.
	\end{equation*}
	Thus
	\begin{equation*}
	\frac{\partial }{\partial t} \int_\Omega |u(t,x)|^2 \diff x \le - 2 C_S \left( \int_\Omega |u(t,x)|^2 \diff x \right)^{ \frac{2\alpha-2}{\alpha} },
	\end{equation*}
	and, integrating,
	\begin{equation*}
	\int_\Omega |\Semi(t)[u_0] (x)|^2 \diff x
	\le
	\left(
	\left(
	\int_\Omega
	|u_0(x)|^2
	\diff x
	\right) ^{ -\frac{\alpha-2}{\alpha} }
	+  \frac {(\alpha-2) 2  C_St}{\alpha}
	\right)^{-\frac{\alpha}{\alpha-2}} .
	\end{equation*}
	Let us fix $y \in \Omega$ and $t > 0$. Let us take the sequence of initial data
	$$u_{0,j} = |B_{\frac 1 j}|^{-1} \chi_{y + B_{\frac 1 j}}.$$
	We have that $\| u_{0,j} \|_{L^1} \le 1$ and $\| u_{0,j} \|_{L^2}  = |B_{\frac1j}|^{-\frac12}  \to \infty$. Therefore,
	\begin{equation}
	\label{eq:weyl estimate 1}
	\int_\Omega |\Semi(t)[u_{0,j}] (x)|^2 \diff x \le
	\left(
	|B_{\frac 1 j} | ^{ \frac{\alpha-2}{\alpha} }
	+  \frac {(\alpha-2)  2  C_St}{\alpha}
	\right)^{-\frac{\alpha}{\alpha-2}}
	\to  \left(  \frac {(\alpha-2) 2 C_St}{\alpha}
	\right)^{-\frac{\alpha}{\alpha-2}}.
	\end{equation}
	Therefore,
	the sequence $\Semi(t)[u_{0,k}]$ as a subsequence converging weakly in $L^2 (\Omega)$.
	
	Let $K(t,\cdot,y) \in L^2 (\Omega)$ be its limit. Since $\Semi(t)$ is self-adjoint
	\begin{equation*}
	\langle \Semi(t) [u_{0,j}] , \varphi_k \rangle = \langle u_{0,j} , \Semi(t) [\varphi_k] \rangle  = e^{-\lambda_k t} |B_{\frac 1 j}|^{-1} \int_{B_{\frac 1 j}} \varphi_k (y + x) \diff x.
	\end{equation*}
	Since $\varphi_k$ is continuous in $x$, passing to the limit
	\begin{equation*}
	\langle K(t, \cdot, y) , \varphi_k \rangle = e^{-\lambda_k t} \varphi_k (y).
	\end{equation*}
	Thus, the weak limit is unique and the whole sequence converges weakly. Due to the weak lower semicontinuity of the norm, we can pass to the limit in \eqref{eq:weyl estimate 1}
	\begin{equation*}
	\int_\Omega K(t,x,y)^2 \diff x
	\le \left(   \frac {(\alpha-2) 2 C_St}{\alpha}
	\right)^{-\frac{\alpha}{\alpha-2}}.
	\end{equation*}
	On the other hand, since $K(t, \cdot, y) \in L^2(\Omega)$ we have that
	\begin{equation*}
	\int_\Omega K(t,x,y)^2 \diff x = \sum_{k=1}^\infty \langle K(t, \cdot, y) , \varphi_k \rangle^2 = \sum_{k=1}^\infty e^{-2\lambda_k t} \varphi_k (y)^2.
	\end{equation*}
	Thus, this series of non-negative functions is uniformly summable due to the estimate. Integrating in $y$
	\begin{equation*}
	\sum_{k=1}^\infty e^{-2\lambda_k t} = \sum_{k=1}^\infty e^{-2\lambda_k t} \int_\Omega \varphi_k (y)^2 \diff y =
	\int_\Omega \int_\Omega K(t,x,y)^2 \diff x \diff y  \le
	\left(   \frac {(\alpha-2) 2 C_St}{\alpha}
	\right)^{-\frac{\alpha}{\alpha-2}} |\Omega| .
	\end{equation*}
	Therefore, we can estimate
	\begin{equation*}
	k e^{-2\lambda_k t}
	\le   \left(   \frac {(\alpha-2) 2 C_St}{\alpha}
	\right)^{-\frac{\alpha}{\alpha-2}}|\Omega|.
	\end{equation*}
	Letting $\lambda_k t = \frac{\alpha}{\alpha-2}$ we recover
	\begin{equation*}
	k e^{-\frac{2\alpha}{\alpha-2}}
	\le \left(   \frac { 2 C_S}{\lambda_k}
	\right)^{-\frac{\alpha}{\alpha-2}} |\Omega|.
	\end{equation*}
	Finally, \eqref{eq:Weyl} follows.
\end{proof}

\section{Existence and estimates of the heat kernel}
	\label{sec:heat kernel existence and estimates}

	\begin{theorem}
		\label{thm:S boundary reg}
		Assume \eqref{eq:G estimate}, \eqref{eq:G delta Linf to delta C} and \eqref{eq:S submarkovian}. Then $\Semi(t)$ is ultracontractive and regularises up to the boundary in the sense that
		\begin{equation}
		\label{eq:S boundary regularity}
		\Semi(t) : L^2 (\Omega) \longrightarrow \delta^\gamma C(\overline \Omega)
		\end{equation}
and
		\begin{equation}
		\label{eq:S boundary regularity dual}
		\Semi(t) : M(\Omega,\delta^\gamma) \longrightarrow L^2 (\Omega)
		\end{equation}
are continuous for all $t>0$. Moreover, the heat kernel $\p(t,x,y)=\Semi(t)[\delta_y](x)$ as well its $\gamma$-normal derivative $\D_\gamma\p(t,x,y)$ exist pointwise in $(0,+\infty)\times\Omega\times\Omega$. Furthermore, for $t>0$, $x,y\in\Omega$ and $\zeta\in\partial\Omega$ we have
\[
0\leq \p(t,x,y)
\leq C(t)\delta(x)^\gamma \delta(y)^\gamma,
    \qquad
\int_{\Omega}
    \p(t,x,y)
\diff x \leq 1,
\]
\[
0\leq \D_\gamma \p(t,\zeta,y)
\leq C(t)\delta(y)^\gamma.
\]
	\end{theorem}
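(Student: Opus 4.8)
The plan is to build everything on the Weyl-type estimate $\lambda_k \ge c\,k^{2s/n}$ from \Cref{thm:weyl} together with the $L^\infty$-type regularity of the Green operator in \eqref{eq:G delta Linf to delta C}. First I would establish \emph{ultracontractivity}: for $u_0\in L^2(\Omega)$ we have $\Semi(t)[u_0]=\sum_k e^{-\lambda_k t}\langle u_0,\varphi_k\rangle\varphi_k$, so by Cauchy--Schwarz
\[
|\Semi(t)[u_0](x)|^2
\le \Big(\sum_k e^{-2\lambda_k t}\varphi_k(x)^2\Big)\Big(\sum_k\langle u_0,\varphi_k\rangle^2\Big)
= K(t,x,x)\,\|u_0\|_{L^2(\Omega)}^2,
\]
where $K(t,x,y)=\sum_k e^{-\lambda_k t}\varphi_k(x)\varphi_k(y)$ is exactly the object already constructed in the proof of the Weyl lemma (there called $K(t,\cdot,y)$, shown to lie in $L^2(\Omega)$ with $\int_\Omega K(t,x,y)^2\,dx \le C(t)$). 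Integrating the diagonal bound $\int_\Omega K(t,x,x)\,dx=\sum_k e^{-\lambda_k t}\le C(t')\,|\Omega|$ for $t>t'>0$ shows $K(t,x,x)$ is integrable; combined with the semigroup property $\Semi(t)=\Semi(t/2)\Semi(t/2)$ this already gives $\Semi(t):L^2\to L^\infty$ boundedly, which is ultracontractivity. The pointwise heat kernel is then \emph{defined} by $\p(t,x,y)=K(t,x,y)$, equivalently $\p(t,x,y)=\Semi(t)[\delta_y](x)$ via duality against $\Semi(t/2)$; symmetry $\p(t,x,y)=\p(t,y,x)$ and nonnegativity $\p\ge0$ follow from self-adjointness of $\Semi(t)$ and from \eqref{eq:max prin} (approximating $\delta_y$ by nonnegative $L^2$ bumps, as in the Weyl proof).

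Next I would upgrade this to boundary regularity \eqref{eq:S boundary regularity}. The idea is to factor one power of the semigroup through the Green operator: since $\Green[f]=\int_0^\infty\Semi(\sigma)[f]\,d\sigma$ and, more usefully, $\Semi(t)=\Green\circ(-A)\circ$(something), a cleaner route is to use the eigenfunction regularity \eqref{eq:regularity of the eigenf}, $\varphi_k\in\delta^\gamma C(\overline\Omega)$, together with quantitative bounds on $\|\varphi_k/\delta^\gamma\|_{L^\infty}$. Applying \eqref{eq:G delta Linf to delta C} to $\varphi_k=\lambda_k\Green[\varphi_k]$ gives $\|\varphi_k/\delta^\gamma\|_{C(\overline\Omega)}\le C\lambda_k\|\varphi_k\|_{L^\infty}$, and $\|\varphi_k\|_{L^\infty}$ is in turn controlled by ultracontractivity applied at a fixed time to $e^{\lambda_k}\Semi(1)[\varphi_k]=\varphi_k$, yielding a polynomial-in-$k$ bound $\|\varphi_k/\delta^\gamma\|_{C(\overline\Omega)}\le C\lambda_k^{a}$ for some explicit $a$ (this is the bound \eqref{eq:varphi-bound} referenced earlier). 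Since $\lambda_k\ge c k^{2s/n}$, the series $\sum_k e^{-\lambda_k t}|\langle u_0,\varphi_k\rangle|\,\|\varphi_k/\delta^\gamma\|_{C(\overline\Omega)}$ converges absolutely and uniformly for each $t>0$ (the factor $e^{-\lambda_k t}\lambda_k^{a}$ decays faster than any power), so $\Semi(t)[u_0]/\delta^\gamma$ is a uniform limit of continuous functions, hence in $C(\overline\Omega)$; this proves \eqref{eq:S boundary regularity}. The dual statement \eqref{eq:S boundary regularity dual}, $\Semi(t):M(\Omega,\delta^\gamma)\to L^2(\Omega)$, follows by duality: for $u_0\in M(\Omega,\delta^\gamma)$ and $g\in L^2(\Omega)$, $\langle\Semi(t)[u_0],g\rangle=\int_\Omega \Semi(t)[g]\,du_0$, and $|\Semi(t)[g](x)|\le \|\Semi(t)[g]/\delta^\gamma\|_{C(\overline\Omega)}\delta(x)^\gamma\le C(t)\|g\|_{L^2}\delta(x)^\gamma$ by \eqref{eq:S boundary regularity} and its norm bound, so $|\langle\Semi(t)[u_0],g\rangle|\le C(t)\|g\|_{L^2}\int_\Omega\delta^\gamma\,d|u_0|$.

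Finally I would deduce the pointwise kernel estimates. Taking $u_0=\delta_y$ (a measure in $M(\Omega,\delta^\gamma)$) in \eqref{eq:S boundary regularity dual}–\eqref{eq:S boundary regularity} and using the semigroup law $\Semi(t)=\Semi(t/2)\Semi(t/2)$: we have $\p(t,\cdot,y)=\Semi(t/2)[\p(t/2,\cdot,y)]$ with $\p(t/2,\cdot,y)\in L^2(\Omega)$ and $\|\p(t/2,\cdot,y)\|_{L^2}\le C(t)\delta(y)^\gamma$ by \eqref{eq:S boundary regularity dual} applied to $\delta_y$; then \eqref{eq:S boundary regularity} gives $\p(t,x,y)=\Semi(t/2)[\p(t/2,\cdot,y)](x)\le C(t)\delta(x)^\gamma\|\p(t/2,\cdot,y)\|_{L^2}\le C(t)\delta(x)^\gamma\delta(y)^\gamma$. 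The mass bound $\int_\Omega\p(t,x,y)\,dx\le1$ is \eqref{eq:L infty nonexpansive}/\eqref{eq:S L1 cont} for the measure $\delta_y$ (or: $\int_\Omega\p(t,x,y)\,dx=\Semi(t)[1](y)\le1$ by self-adjointness and \eqref{eq:S submarkovian}). For the $\gamma$-normal derivative, the two-sided control $0\le\p(t,x,y)\le C(t)\delta(x)^\gamma\delta(y)^\gamma$ together with the uniform convergence in $\delta^\gamma C(\overline\Omega)$ of the series defining $\p(t,\cdot,y)$ shows the limit $\D_\gamma\p(t,\zeta,y)=\lim_{x\to\zeta}\p(t,x,y)/\delta(x)^\gamma$ exists for every $\zeta\in\partial\Omega$, and is $\ge0$ and $\le C(t)\delta(y)^\gamma$. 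I expect the main obstacle to be the bookkeeping in the second paragraph: extracting an \emph{effective} polynomial bound $\|\varphi_k/\delta^\gamma\|_{C(\overline\Omega)}\le C\lambda_k^a$ from ultracontractivity plus \eqref{eq:G delta Linf to delta C}, and then checking carefully that $\sum_k e^{-\lambda_k t}\lambda_k^a|\langle u_0,\varphi_k\rangle|<\infty$ uniformly — one must split off finitely many terms and use $\sum_k\langle u_0,\varphi_k\rangle^2<\infty$ with Cauchy--Schwarz against the tail $\sum_{k\ge N}e^{-2\lambda_k t}\lambda_k^{2a}$, which is finite by the Weyl bound.
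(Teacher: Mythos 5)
Your overall strategy runs parallel to the paper's: both proofs hinge on the one-sided Weyl law of \Cref{thm:weyl} to get a polynomial growth bound $\|\varphi_k/\delta^\gamma\|_{C(\overline\Omega)}\le C\lambda_k^{w}$ of the type \eqref{eq:varphi-bound}, and both then conclude the boundary regularity \eqref{eq:S boundary regularity} by the absolute convergence of $\sum_k e^{-\lambda_k t}\lambda_k^{w}$, exactly as in the paper's step \emph{(5)}$\implies$\emph{(1)} of \Cref{lem:S boundary reg}. Your treatment of \eqref{eq:S boundary regularity dual} by duality, and of the kernel estimates by concentration of $|B_{1/j}|^{-1}\chi_{y+B_{1/j}}$, also essentially coincides with the paper's. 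Where you genuinely diverge is in \emph{how} you produce the bound on $\|\varphi_k/\delta^\gamma\|_{C(\overline\Omega)}$: the paper cites a pure elliptic bootstrap, $\Green^w:L^2(\Omega)\to\delta^\gamma C(\overline\Omega)$, while you propose to first extract ultracontractivity $\Semi(t):L^2\to L^\infty$ from the Cauchy--Schwarz estimate $|\Semi(t)[u_0](x)|^2\le K(2t,x,x)\|u_0\|_{L^2}^2$ (with $K$ the object from the Weyl proof), and then boost $L^\infty$ to $\delta^\gamma C$ with one application of $\Green$. This is a nice self-contained idea, since the bound $K(t,x,x)=\|K(t/2,\cdot,x)\|_{L^2}^2\le C(t/2)$ is uniform in $x$ from the Weyl estimate \eqref{eq:weyl estimate 1}, but the boost step has two substantive errors.

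First, the inference ``applying \eqref{eq:G delta Linf to delta C} to $\varphi_k=\lambda_k\Green[\varphi_k]$ gives $\|\varphi_k/\delta^\gamma\|_{C(\overline\Omega)}\le C\lambda_k\|\varphi_k\|_{L^\infty}$'' is not valid. Hypothesis \eqref{eq:G delta Linf to delta C} controls $\|\Green[f]/\delta^\gamma\|_{C(\overline\Omega)}$ by $\|f/\delta^\gamma\|_{L^\infty}$, which is a \emph{stronger} norm than $\|f\|_{L^\infty}$: bounded functions need not lie in $\delta^\gamma L^\infty(\Omega)$ (e.g.\ $f\equiv1$). You need an intermediate step, namely that the upper bound in \eqref{eq:G estimate} gives $\Green:L^\infty(\Omega)\to\delta^\gamma L^\infty(\Omega)$ (since $|\Green[f](x)|\le\|f\|_{L^\infty}\Green[1](x)\lesssim\|f\|_{L^\infty}\delta(x)^\gamma$), and only \emph{then} can \eqref{eq:G delta Linf to delta C} be applied; using $\varphi_k=\lambda_k^2\Green^2[\varphi_k]$ this yields $\|\varphi_k/\delta^\gamma\|_{C(\overline\Omega)}\le C\lambda_k^2\|\varphi_k\|_{L^\infty}$.

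Second, ``ultracontractivity applied at a fixed time to $e^{\lambda_k}\Semi(1)[\varphi_k]=\varphi_k$'' gives the \emph{exponential} bound $\|\varphi_k\|_{L^\infty}\le C\,e^{\lambda_k}$, not a polynomial one, and that is useless for summing $\sum_k e^{-\lambda_k t}\lambda_k^a|\langle u_0,\varphi_k\rangle|$ for all $t>0$. To get a polynomial bound you must let the time depend on $k$: with $\|\Semi(\tau)\|_{L^2\to L^\infty}\le C\tau^{-n/(4s)}$ (which does come out of your Cauchy--Schwarz argument once you track the $t$-dependence of $K(t,x,x)$) and $\tau=1/\lambda_k$ one gets $\|\varphi_k\|_{L^\infty}=e\,\|\Semi(1/\lambda_k)[\varphi_k]\|_{L^\infty}\le C\lambda_k^{n/(4s)}$. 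After both repairs, your route closes to the paper's bound $\|\varphi_k/\delta^\gamma\|_{C(\overline\Omega)}\le C\lambda_k^{w}$ with $w=2+\frac{n}{4s}$; the paper's route is slicker (one black-box elliptic estimate), yours is more self-contained but needs this extra bookkeeping, which you correctly flagged as the main obstacle.
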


    In the literature, the reader will find many equivalent ways of proving this result. We list a few below.
    \normalcolor
    \begin{lemma}
	    	\label{lem:S boundary reg}
    	Assume \eqref{eq:G estimate} and \eqref{eq:G delta Linf to delta C}. Then, 
    	the following are equivalent:
    	\begin{enumerate}
    		\item \label{it:S boundary reg 1}
    		We have \eqref {eq:S boundary regularity}.
    		\item \label{it:S boundary reg 2}
            We have \eqref {eq:S boundary regularity dual}.		

    		\item \label{it:S boundary reg 3}
    		We have that $\Semi(t)$ is given %
    by the integral operator in \eqref{eq:Semi given kernel} with kernel $\p(t,x,y) = \Semi(t) [\delta_y] (x)$
    and is intrinsic ultracontractive:
    		for every $t>0$, there exists a constant $C (t) > 0$ such that
    		\begin{equation}
    		\label{eq:S estimate}
    			\p(t,x,y) \le C(t) \delta(x)^\gamma \delta(y)^\gamma.
    		\end{equation}
    		\item \label{it:S boundary reg 4}
    		For every $t>0$ we have that
    		\begin{equation*}
    		 \sum_{k=1}^\infty e^{- 2 \lambda_k t} = \int_\Omega \int_\Omega \p(t,x,y)^2 \diff y \diff x  < + \infty.
    		\end{equation*}

    		\item \label{it:S boundary reg 5}
    		For every $t, w > 0$ we have that
    		\begin{equation*}
    		\sum_{k=1}^\infty e^{- \lambda_k t} \lambda_k^w < +\infty.
    		\end{equation*}
    	\end{enumerate}
    	If any of the above holds, for $t_0 > 0$ fixed, as $m \to +\infty$ we have that
    		\begin{equation*}
    		\int_\Omega \int_\Omega  \left|   \p(t_0,x,y) - \sum_{k=1}^m e^{-\lambda_k t_0} \varphi_k (x) \varphi_k (y)  \right|^2 \diff y \diff x = \sum_{k=m+1}^\infty e^{-2 \lambda_k t} \to 0.
    		\end{equation*}
    \end{lemma}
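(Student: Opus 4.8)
The plan is to prove the five conditions equivalent by running around the cycle $\eqref{it:S boundary reg 1}\Rightarrow\eqref{it:S boundary reg 2}\Rightarrow\eqref{it:S boundary reg 3}\Rightarrow\eqref{it:S boundary reg 4}\Rightarrow\eqref{it:S boundary reg 5}\Rightarrow\eqref{it:S boundary reg 1}$, and then to deduce the ``moreover'' clause separately from \eqref{it:S boundary reg 4}. The recurring tools will be the self-adjointness of $\Semi(t)$ on $L^2(\Omega)$, the semigroup identity $\Semi(t)=\Semi(t/2)\Semi(t/2)$, and the non-expansiveness of $\Semi(t)$ on $L^2(\Omega)$ established in \Cref{sec:semigroup}.

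For $\eqref{it:S boundary reg 1}\Rightarrow\eqref{it:S boundary reg 2}$ I would argue by duality: given $\mu\in M(\Omega,\delta^\gamma)$, the map $u\mapsto\int_\Omega\Semi(t)[u]\,d\mu$ is a bounded linear functional on $L^2(\Omega)$, since $\abs{\Semi(t)[u](x)}\le\norm[L^\infty(\Omega)]{\Semi(t)[u]/\delta^\gamma}\,\delta(x)^\gamma\le C\norm[L^2(\Omega)]{u}\,\delta(x)^\gamma$ and $\int_\Omega\delta^\gamma\,d\abs\mu<\infty$; Riesz representation then yields $\Semi(t)[\mu]\in L^2(\Omega)$, and self-adjointness shows it extends the operator on $L^2(\Omega)$. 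For $\eqref{it:S boundary reg 2}\Rightarrow\eqref{it:S boundary reg 3}$, note $\delta_y\in M(\Omega,\delta^\gamma)$ with norm $\delta(y)^\gamma$, so $\kappa_y\defeq\Semi(t/2)[\delta_y]\in L^2(\Omega)$ with $\norm[L^2(\Omega)]{\kappa_y}\le C\delta(y)^\gamma$; a density and self-adjointness argument identifies $\p(t,x,y)\defeq\angles{\kappa_x,\kappa_y}=\Semi(t)[\delta_y](x)$ as the (symmetric) integral kernel of $\Semi(t)$, and Cauchy--Schwarz gives $\abs{\p(t,x,y)}\le\norm[L^2(\Omega)]{\kappa_x}\norm[L^2(\Omega)]{\kappa_y}\le C(t)\,\delta(x)^\gamma\delta(y)^\gamma$.

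Next, $\eqref{it:S boundary reg 3}\Rightarrow\eqref{it:S boundary reg 4}$: the Chapman--Kolmogorov identity $\p(t,x,\cdot)=\Semi(t/2)[\p(t/2,x,\cdot)]$ (legitimate since $\p(t/2,x,\cdot)\in L^2(\Omega)$ by \eqref{it:S boundary reg 3}) together with non-expansiveness gives $\norm[L^2(\Omega)]{\p(t,x,\cdot)}\le\norm[L^2(\Omega)]{\p(t/2,x,\cdot)}\le C\delta(x)^\gamma$, hence $\int_\Omega\int_\Omega\p(t,x,y)^2\,dy\,dx<\infty$; and since $\angles{\p(t,x,\cdot),\varphi_k}=\angles{\Semi(t)[\delta_x],\varphi_k}=e^{-\lambda_k t}\varphi_k(x)$, Parseval in one variable and monotone convergence in the other give $\int_\Omega\int_\Omega\p(t,x,y)^2\,dy\,dx=\sum_{k}e^{-2\lambda_k t}$. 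Then $\eqref{it:S boundary reg 4}\Rightarrow\eqref{it:S boundary reg 5}$ is elementary, from $\sum_k e^{-\lambda_k t}\lambda_k^w\le\bigl(\sup_{r>0}r^we^{-rt/2}\bigr)\sum_k e^{-\lambda_k t/2}$ and $\sum_k e^{-\lambda_k t/2}<\infty$ by \eqref{it:S boundary reg 4} applied at $t/4$. To close the cycle, $\eqref{it:S boundary reg 5}\Rightarrow\eqref{it:S boundary reg 1}$: I would invoke the polynomial bounds $\norm[L^\infty(\Omega)]{\varphi_k/\delta^\gamma}\le C\lambda_k^\beta$ and $\norm[L^\infty(\partial\Omega)]{\D_\gamma\varphi_k}\le C\lambda_k^\beta$ (obtained by iterating $\varphi_k=\lambda_k\Green[\varphi_k]$ against \eqref{eq:G estimate} and \eqref{eq:G delta Linf to delta C}; this is \eqref{eq:varphi-bound}); then by \eqref{it:S boundary reg 5} the series $\sum_k e^{-\lambda_k t}\bigl(\varphi_k(x)/\delta(x)^\gamma\bigr)\bigl(\varphi_k(y)/\delta(y)^\gamma\bigr)$ converges uniformly on $\overline\Omega\times\overline\Omega$, so the $\p$ defined by \eqref{eq:S from G} satisfies $\p(t,\cdot,y)\in\delta^\gamma C(\overline\Omega)$ uniformly in $y$; interchanging sum and integral identifies this $\p$ with the kernel of $\Semi(t)$ and yields \eqref{eq:S boundary regularity}. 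Finally, the ``moreover'' follows from \eqref{it:S boundary reg 4}: $\set{\varphi_k\otimes\varphi_\ell}_{k,\ell}$ is orthonormal in $L^2(\Omega\times\Omega)$, the coefficient of $\p(t_0,\cdot,\cdot)$ against $\varphi_k\otimes\varphi_\ell$ equals $\angles{\Semi(t_0)[\varphi_\ell],\varphi_k}$, which is $e^{-\lambda_\ell t_0}$ if $k=\ell$ and $0$ otherwise, so $\p(t_0,\cdot,\cdot)=\sum_k e^{-\lambda_k t_0}\varphi_k\otimes\varphi_k$ in $L^2(\Omega\times\Omega)$ and the truncation error has squared norm $\sum_{k>m}e^{-2\lambda_k t_0}\to0$.

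The hard part will be the closing implication $\eqref{it:S boundary reg 5}\Rightarrow\eqref{it:S boundary reg 1}$ — equivalently, upgrading an ``$\,\le C\delta^\gamma$'' estimate to genuine membership in $\delta^\gamma C(\overline\Omega)$: this hinges on the polynomial growth of the eigenfunctions and their $\gamma$-normal derivatives, which itself relies on a bootstrap for $\Green$ fed by the two-sided Green's function estimate \eqref{eq:G estimate} and the boundary regularity \eqref{eq:G delta Linf to delta C}. Everything else reduces to the semigroup identity, Cauchy--Schwarz, Parseval's identity, and routine estimates for absolutely convergent exponential sums.
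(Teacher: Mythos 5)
Your proposal follows exactly the same cycle $\eqref{it:S boundary reg 1}\Rightarrow\eqref{it:S boundary reg 2}\Rightarrow\eqref{it:S boundary reg 3}\Rightarrow\eqref{it:S boundary reg 4}\Rightarrow\eqref{it:S boundary reg 5}\Rightarrow\eqref{it:S boundary reg 1}$ as the paper, with the same key tools (duality/self-adjointness, the factorisation $\Semi(t)=\Semi(t/2)\Semi(t/2)$, Parseval, and the polynomial eigenfunction bound from iterating $\varphi_k=\lambda_k\Green[\varphi_k]$), and is correct. The only deviations are cosmetic: in $\eqref{it:S boundary reg 2}\Rightarrow\eqref{it:S boundary reg 3}$ you build the kernel symmetrically via $\kappa_y=\Semi(t/2)[\delta_y]$ and Cauchy--Schwarz, which is a slightly cleaner way to get the pointwise bound $\p\le C\delta(x)^\gamma\delta(y)^\gamma$ than the paper's direct statement $\p/\delta(y)^\gamma=\Semi(t)[\delta_y/\delta(y)^\gamma]$; in $\eqref{it:S boundary reg 3}\Rightarrow\eqref{it:S boundary reg 4}$ your Chapman--Kolmogorov step is actually superfluous since the integrability of $\p^2$ already follows from the boundedness in \eqref{it:S boundary reg 3}, which is what the paper uses; and in $\eqref{it:S boundary reg 5}\Rightarrow\eqref{it:S boundary reg 1}$ you prove uniform convergence of the kernel series on $\overline\Omega\times\overline\Omega$ directly, whereas the paper estimates the operator norm term-by-term and appeals to density of finite eigenexpansions in $L^2(\Omega)$ — both routes rest on the same bound $\norm[C(\overline\Omega)]{\varphi_k/\delta^\gamma}\le C\lambda_k^w$, i.e.\ \eqref{eq:varphi-bound}. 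Your derivation of the ``moreover'' clause from \eqref{it:S boundary reg 4} via orthonormality of $\varphi_k\otimes\varphi_\ell$ is fine and essentially the content of the paper's $\eqref{it:S boundary reg 3}\Rightarrow\eqref{it:S boundary reg 4}$ computation.
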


\begin{proof}[Proof of \Cref{thm:S boundary reg} assuming \Cref{lem:S boundary reg}]
	 Properties \eqref{eq:S boundary regularity} and \eqref{eq:S boundary regularity dual} are immediate consequences of \Cref{thm:weyl}, as \Cref{it:S boundary reg 5} in \Cref{lem:S boundary reg} holds true. As a result, $\p(t,\cdot,y)=\Semi(t)[\delta_y]$ exists in $\delta^\gamma C(\overline{\Omega})$ and thus the limit \eqref{eq:D gamma S defn} is well-defined and uniform in $\zeta \in \partial \Omega$ for every $t$ fixed since
		\begin{equation*}
		\D_\gamma \p (t,\zeta,y) = \D_\gamma \Semi(t) [\delta_y] (\zeta).
		\end{equation*}
The remaining properties follow from a concentration argument. We have that
	    \begin{equation*}
	    	 \Semi (t) \left[  |B_{\frac1j}|^{-1}\chi_{y + B_{\frac1j}} \right] \longrightarrow  \p(t,\cdot,y) \qquad \text{in } \delta^\gamma C(\overline \Omega).
	    \end{equation*}
Due to \eqref{eq:S L1 cont} we have that
	    \begin{equation*}
	    	\int_\Omega \p (t,x,y) \diff x \le 1.
	    \end{equation*}
	    Due to \eqref{eq:S submarkovian} and \eqref{eq:S estimate} we have that
\[
0 \leq \p(t,x,y) \leq C(t)\delta(x)^\gamma \delta(y)^\gamma.
\]
Dividing the inequalities by $\delta(x)^\gamma$ and passing to the limit $x\to\zeta$, we conclude that
		\begin{equation*}
		0 \le \D_\gamma \p (t, \zeta,y)   \le C(t) \delta(y)^\gamma.
    \qedhere
		\end{equation*}
\end{proof}

	    \begin{remark}
\label{rmk:rigorous parabolic}
	    	We point out several facts:
	    \begin{enumerate}
        \item \label{it:C}
            Without loss of generality, we take in \eqref{eq:S estimate} the optimal $C(t)$, i.e.
            \[
            C(t)
            =\sup_{x,y\in\Omega}
                \dfrac{\p(t,x,y)}{\delta(x)^\gamma\delta(y)^\gamma}
            =\sup_{y\in\Omega}
                \dfrac{
                    \norm[C(\overline\Omega)]{
                        \Semi(t)[\delta_y]/\delta^\gamma
                    }
                }{
                    \norm[M(\Omega,\delta^\gamma)]{
                        \delta_y
                    }
                }
            \leq
                \norm[
                    \mathcal{L}(
                        M(\Omega,\delta^\gamma),
                        \delta^\gamma C(\overline\Omega)
                    )
                ]{
                    \Semi(t)
                }.
            \]
            We point out that $\norm[M(\Omega,\delta^\gamma)]{
            	\delta_y} = \delta(y)^\gamma$.
		\item \label{it:integrability C} From the proof and \Cref{it:C}, we recover that
		\begin{align*}
			C(t) &\le \| \Semi(t) \|_{\mathcal L (M(\Omega, \delta^\gamma), \delta^\gamma C(\overline \Omega) )} \\
			& \le \| \Semi(t/2) \|_{\mathcal L (L^2 (\Omega), \delta^\gamma C(\overline \Omega) )} \| \Semi(t/2) \|_{\mathcal L ( M(\Omega, \delta^\gamma) , L^2 (\Omega)) } \\
			&= \| \Semi(t/2) \|_{\mathcal L (L^2 (\Omega), \delta^\gamma C(\overline \Omega) )}^2 \\
			& \le C \left( \sum_{k=1}^\infty e^{-\lambda_k t/2} \lambda_k^{w} \right)^{2},
		\end{align*}
		where $w$ is such that $\Green^w : L^2 (\Omega) \to \delta^\gamma C(\overline \Omega)$. Due to \eqref{eq:G delta Linf to delta C} we have that $w \le w' + 1$ where  $\Green^{w'} : L^2 (\Omega) \to \delta^\gamma L^\infty(\Omega)$ and $w'$  can be recovered from \eqref{eq:G estimate}.
		With an estimate of this value $w$ and the one-sided Weyl's law, we could recover an integrability estimate of $C(t)$.

	    \item %
Using \eqref{eq:G from S} with $|B_{\frac1j}|^{-1} \chi_{y + B_{\frac1j}} \in L^2(\Omega)$,
		\begin{equation*}%
		\begin{split}
		    \int_{0}^{\infty}
		        \Semi (t) \left[
		            |B_{\frac1j}|^{-1}
		            \chi_{y + B_{\frac1j}}
		        \right]
                 (x)
		    \diff t
		=
		    \Green\left[
		            |B_{\frac1j}|^{-1}
		            \chi_{y + B_{\frac1j}}
		    \right]  (x) .\\
		\end{split}\end{equation*}
		Since the right hand side is bounded for any $ x\neq y$ in $\Omega$ and $j$ large enough, by Dominated Convergence Theorem, we can pass to the limit $j\to\infty$ to obtain
		\[
		\G(x,y)=\int_0^\infty \p(t,x,y)\diff t.
		\]

		\item
    	Besides this eigenvalue theory, many results on regularising 	properties of semigroups in the linear and nonlinear setting are known. We point the reader to the exposition
\cite{Coulhon2016}.

   		\item
    	Since $\Semi(0) = I$ which cannot be written as integration with a bounded kernel, we know that  $C(t)  \to +\infty$ as $t \to 0$.
    	
    \item
    	If the energy spaces are embedded into higher regularity spaces, i.\,e., there exists $w,k\in\mathbb{N}$ such that $H_\Ls^{2w}(\Omega) \hookrightarrow C^k(\Omega)$, then
    	this translates into regularity of $\Semi(t)$, $t>0$.
        Indeed, we have that
        $\Green^w: L^2(\Omega) \to H_\Ls^{2w}(\Omega) \hookrightarrow C^k(\Omega)$, and the implication \textbf{\Cref{it:S boundary reg 5} $\implies$ \Cref{it:S boundary reg 1}} in the proof shows that $\Semi(t):M(\Omega,\delta^\gamma) \longrightarrow C^k(\Omega) \cap \delta^\gamma C(\overline{\Omega})$.
    \end{enumerate}
    \end{remark}

    \begin{proof}[Proof of \Cref{lem:S boundary reg}]
    \noindent\textbf{\Cref{it:S boundary reg 1} $\implies$ \Cref{it:S boundary reg 2}}
    Since $\Semi(t)$ is self-adjoint, \eqref{eq:S boundary regularity} implies by duality that
       \begin{equation*}
   \Semi(t) : M (\Omega, \delta^\gamma) \to L^2( \Omega).
   \end{equation*}
   Furthermore,
   the bootstrap satisfies $\Semi(t) = \Semi(t/2) \Semi(t/2): M(\Omega, \delta^\gamma) \to L^2 (\Omega) \to \delta^\gamma C(\overline \Omega)$.

   \noindent\textbf{\Cref{it:S boundary reg 2} $\implies$ \Cref{it:S boundary reg 3}} In particular we can recover the kernel as
   \begin{equation*}
   	\p(t,x,y) = \Semi(t) [\delta_y] (x)
   \end{equation*}
   where $\delta_y$ is the Dirac delta at $y$. Furthermore
   \begin{equation*}
   	\frac {\p(t,x,y)}{\delta(y)^\gamma} =  \Semi(t) \left [ \frac{ \delta_y } {\delta(y)^\gamma} \right] (x) \le C(t) \delta(x)^\gamma.
   \end{equation*}

	\noindent\textbf{\Cref{it:S boundary reg 3} $\implies$ \Cref{it:S boundary reg 4}} Let $t_0 > 0, x_0 \in \Omega$ be fixed. The function $\p(t_0,x_0,\cdot)  \in L^\infty (\Omega) \subset L^2 (\Omega)$. Therefore, for a.e. $y \in \Omega$,
	\begin{equation*}
		\p(t_0,x_0,y) = \sum_{k=1}^\infty \langle \p (t_0, x_0, \cdot) , \varphi_k \rangle \varphi_k (y).
	\end{equation*}
	Notice that
	\begin{equation*}
		\langle \p (t_0, x_0, \cdot) , \varphi_k \rangle = \int_\Omega \p(t_0, x_0, y) \varphi_k (y) \diff y  = \Semi(t_0) [\varphi_k] (x_0) = e^{-\lambda_k t_0} \varphi_k (x_0).
	\end{equation*}
	Furthermore, the series converges in $L^2 (\Omega)$ so
	\begin{equation*}
		\int_\Omega \left|   \p(t_0,x_0,y) - \sum_{k=1}^m e^{-\lambda_k t_0} \varphi_k (x_0) \varphi_k (y)  \right|^2 \diff y \to 0 \qquad \text{ as } m \to+\infty.
	\end{equation*}
	Computing the norm of the finite sums and passing to the limit
	\begin{equation*}
		\int_\Omega \p(t_0,x_0,y)^2 \diff y = \sum_{k=1}^\infty e^{-2 \lambda_k t_0} \varphi_k(x_0)^2 .
	\end{equation*}
	Let us look at the sequence of functions
	\begin{equation*}
		u_m (x) \defeq  \sum_{k=1}^m e^{-2 \lambda_1 t_0} \varphi_k(x)^2 \le  \int_\Omega \p(t_0,x,y)^2 \diff y \defbyeq u(x)
	\end{equation*}
	The sequence $u_m$ is monotone non-decreasing sequence of non-negative functions such that $u_m \to u$ pointwise. By the Monotone Convergence Theorem
	\begin{equation*}
		\sum_{k=1}^m e^{-2 \lambda_1 t_0}  = \int_\Omega u_m (x) \diff x \to \int_\Omega u(x) \diff x = \int_\Omega \int_\Omega \p(t_0,x,y)^2 \diff y \diff x.
	\end{equation*}
	Since $\p(t_0,\cdot,\cdot)$ is bounded on $\Omega\times\Omega$, the infinite series converges.

	\noindent\textbf{\Cref{it:S boundary reg 4} $\implies$ \Cref{it:S boundary reg 5}.} We simply need to point that, for every $t, w > 0$, there exists an $\underline \lambda(t,w)$ such that, if $\lambda \ge \underline \lambda(t,w)$
	\begin{equation*}
		\lambda^w \le e^{\lambda \frac t 2}.
	\end{equation*}
	Therefore
	\begin{align*}
		\sum_{k=1}^\infty e^{- \lambda_k t} \lambda_k^w &= \sum_{\lambda_k < \underline \lambda(t,w) } e^{- \lambda_k t} \lambda_k^w +  \sum_{\lambda_k \ge \underline \lambda(t,w) } e^{- \lambda_k t} \lambda_k^w \\
		&\le  \sum_{\lambda_k < \underline \lambda(t,w) } e^{- \lambda_k t} \lambda_k^w +  \sum_{\lambda_k \ge \underline \lambda(t,w) } e^{- \lambda_k \frac t 2}
	\end{align*}
	Since the sequence of eigenvalues diverges, the first sum has a finite number of elements. The second term is finite by \Cref{it:S boundary reg 4}.
	
	\noindent\textbf{\Cref{it:S boundary reg 5} $\implies$ \Cref{it:S boundary reg 1}.} From the elliptic theory (see \cite{abatangelo+gc+vazquez2019,chan+gc+vazquez2020}) we know by \eqref{eq:G estimate} and \eqref{eq:G delta Linf to delta C} that there exists $w\in \mathbb N$ such that $\Green^w : L^2 (\Omega) \to \delta^\gamma C(\overline \Omega)$. Since, for an eigenfunction $\varphi_k = \lambda_k^w \Green^w [\varphi_k]$ we have that
	\begin{equation}\label{eq:varphi-bound}
	\left\| \frac{\varphi_k}{\delta^\gamma} \right \|_{C ( \overline\Omega )}
\le  C \lambda_k^w \| \varphi_k \|_{L^2 (\Omega)} = C \lambda_m^w,
	\end{equation}
for $C$ independent of $m$.
This is the philosophy behind the proof of \cite[Proposition 3.1]{Fdz-Real+RosOton2016} (written there for the RFL). Following the same idea as \cite[Theorem 1.1]{Fdz-Real+RosOton2016} we can apply \eqref{eq:regularity of the eigenf} to deduce
	\begin{equation*}
	\left\| \frac{\Semi(t)[u_0]}{\delta^\gamma} \right\|_{C(\overline \Omega)}  \le \sum_{k=1}^\infty e^{-\lambda_k t} |\langle u_0, \varphi_k \rangle|	\left\| \frac{\varphi_k}{\delta^\gamma} \right\|_{C(\overline \Omega)} \le C \| u_0 \|_{L^2 (\Omega)} \sum_{k=1}^\infty e^{-\lambda_k t} \lambda_k^w .
	\end{equation*}
    By the assumption of {\Cref{it:S boundary reg 5}}, the eigenvalues grow so fast that the  last series is summable. Since
	\begin{equation*}
\set{
	u_0 (x) = \sum_{k=1}^m c_k \varphi_k (x)
:m\in \mathbb N,\,c_k\in\re
}
	\end{equation*}
	is a dense set in $L^2 (\Omega)$ such that $\Semi(t) [u_0] \in \delta^\gamma C(\overline \Omega)$, the inequality above guaranties that $\Semi(t)[u_0] / \delta^\gamma$ can be  approximated  uniformly by continuous functions, and is, therefore, continuous.
\end{proof}

\begin{remark}
	Notice that this argument is valid as long as $\p(t,x, \cdot) \in L^2(\Omega)$, and can be extended to $\G$ so long as $\G (x, \cdot) \in L^2 (\Omega)$. Due to \eqref{eq:G estimate} this would require that  $n<4s$.
\end{remark}

To conclude this section we state some estimates of the heat kernel as well as its $\gamma$-normal derivative in the general setting.

\begin{theorem}\label{thm:S estimate}
Assume \eqref{eq:G estimate}, \eqref{eq:G delta Linf to delta C} and \eqref{eq:S submarkovian}. Then the heat kernel $\p(t,x,y)$ satisfies the following estimates.
\begin{enumerate}
\item \label{it:S estimate 1} For all $t>0$ and $x,y\in\Omega$,
\[
0\leq \p(t,x,y)
\leq Ct^{-\frac{n}{2s}}.
\]
\item If $\gamma<2s$, then for all $t>0$, $x,y\in\Omega$ and $\zeta\in\partial\Omega$,
\[
0\leq \p(t,x,y)
\leq Ct^{-\frac{n}{2s-\gamma}}
    \delta(x)^\gamma
    \delta(y)^\gamma,
\]
\[
0\leq \D_\gamma\p(t,\zeta,y)
\leq Ct^{-\frac{n}{2s-\gamma}}
    \delta(y)^\gamma.
\]
\item For any $\ee>0$ there exists a large $T>0$ such that for all $t\geq T$, $x,y\in\Omega$ and $\zeta\in\partial\Omega$,
\begin{equation}
\label{eq:asymptotic S large times}
(1-\ee)
    e^{-\lambda_1 t}
    \varphi_1(x)
    \varphi_1(y)
\leq
\p(t,x,y)
\leq (1+\ee)
    e^{-\lambda_1 t}
    \varphi_1(x)
    \varphi_1(y),
\end{equation}
\[
(1-\ee)
    e^{-\lambda_1 t}
    \D_\gamma\varphi_1(\zeta)
    \varphi_1(y)
\leq
\D_\gamma\p(t,\zeta,y)
\leq
(1+\ee)
    e^{-\lambda_1 t}
    \D_\gamma\varphi_1(\zeta)
    \varphi_1(y).
\]
\end{enumerate}
\end{theorem}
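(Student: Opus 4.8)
The plan is to prove the three assertions in order, after two reductions that make parts (1)–(2) local in time. On any compact subinterval $[a,b]\subset(0,\infty)$ the optimal constant $C(t)=\sup_{x,y}\p(t,x,y)/(\delta(x)^\gamma\delta(y)^\gamma)$ from \Cref{rmk:rigorous parabolic} satisfies $C(t)\le C\bigl(\sum_k e^{-\lambda_k t/2}\lambda_k^w\bigr)^2$, a locally bounded function of $t$ by the one-sided Weyl law \eqref{weyl1}; and for $t$ large part (3) already gives $\p(t,x,y)\le 2e^{-\lambda_1 t}\varphi_1(x)\varphi_1(y)$, with $t^{a}e^{-\lambda_1 t}$ bounded. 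Hence for both the unweighted estimate of (1) and the weighted estimate of (2) it suffices to establish the claimed rate as $t\to 0^+$. Positivity of $\p$ and of $\D_\gamma\p$ is already contained in \Cref{thm:S boundary reg}.

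For part (1) I would essentially re-read the proof of \Cref{thm:weyl}. The differential inequality $\tfrac{d}{dt}\|\Semi(t)u_0\|_{L^2}^2=-2\sum_k\lambda_k\langle u,\varphi_k\rangle^2\le -2C_S\|u\|_{L^{2^\star}}^2$ coming from \eqref{eq:L Sobolev} with $2^\star=\tfrac{2n}{n-2s}$, the interpolation $\|u\|_{L^2}^2\le\|u\|_{L^{2^\star}}^{2/(2^\star-1)}\|u\|_{L^1}^{(2^\star-2)/(2^\star-1)}$, and the $L^1$-nonexpansiveness \eqref{eq:S L1 cont} yield, exactly as in \eqref{eq:weyl estimate 1}, $\|\Semi(t)[u_0]\|_{L^2}^2\le C t^{-n/2s}\|u_0\|_{L^1}^2$. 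Taking $u_0=|B_{1/j}|^{-1}\chi_{y+B_{1/j}}$ and passing to the limit (using the $\delta^\gamma C(\overline\Omega)$-convergence of \Cref{thm:S boundary reg}) gives $\|\p(t,\cdot,y)\|_{L^2}^2\le Ct^{-n/2s}$, whence $\p(t,x,x)=\|\p(t/2,\cdot,x)\|_{L^2}^2\le Ct^{-n/2s}$ and, by Cauchy--Schwarz applied to the spectral series \eqref{eq:S from G} (convergent by \Cref{lem:S boundary reg}), $\p(t,x,y)\le(\p(t,x,x)\p(t,y,y))^{1/2}\le Ct^{-n/2s}$.

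Part (2) is the main obstacle: the unweighted bound is insufficient, since combining it with the elementary survival estimate $\Semi(t)[1](x)\le t^{-1}\Green[1](x)\le Ct^{-1}\delta(x)^\gamma$ — valid because $\gamma<2s$ gives $\Green[1]\asymp\delta^\gamma$ (\Cref{sec:boundary estimates}) and because $\Semi(t)[1]$ is non-increasing in $t$ by \eqref{eq:S submarkovian} — only produces the exponent $\tfrac n{2s}+2>\tfrac n{2s-\gamma}$. My plan is instead to run the ODE argument of part (1) for the ground-state transformed semigroup $\widetilde\Semi(t)u=e^{\lambda_1 t}\varphi_1^{-1}\Semi(t)[\varphi_1 u]$, a symmetric conservative Markov semigroup on $L^2(\Omega,\varphi_1^2\diff x)$ with Dirichlet form $\widetilde{\mathcal E}(u)=\langle\Ls v,v\rangle-\lambda_1\|v\|_{L^2(\Omega)}^2$, $v=\varphi_1 u$, and kernel $\widetilde\p(t,x,y)=e^{\lambda_1 t}\p(t,x,y)/(\varphi_1(x)\varphi_1(y))$. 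The crucial step is a weighted Nash inequality with respect to $\varphi_1^2\diff x\asymp\delta^{2\gamma}\diff x$,
\[
\|u\|_{L^2(\Omega,\varphi_1^2\diff x)}^{\,2+\frac{2(2s-\gamma)}{n}}
\le C\Big(\widetilde{\mathcal E}(u)+\|u\|_{L^2(\Omega,\varphi_1^2\diff x)}^2\Big)\,\|u\|_{L^1(\Omega,\varphi_1^2\diff x)}^{\frac{2(2s-\gamma)}{n}},
\]
which I would obtain by interpolating \eqref{eq:L Sobolev} applied to $v=\varphi_1 u$ against $\int_\Omega|v|\varphi_1\diff x$, controlling the near-boundary regime by the elliptic boundary estimates $\Green[\delta^\beta]\asymp\delta^\gamma$ for $\beta>\gamma-2s$. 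This yields $\widetilde\p(t,x,y)\le Ct^{-n/(2s-\gamma)}$ for small $t$, i.e. $\p(t,x,y)\le Ce^{-\lambda_1 t}t^{-n/(2s-\gamma)}\varphi_1(x)\varphi_1(y)\le Ct^{-n/(2s-\gamma)}\delta(x)^\gamma\delta(y)^\gamma$; dividing by $\delta(x)^\gamma$ and letting $x\to\zeta$, the limit being uniform by \eqref{eq:G delta Linf to delta C} (cf. \Cref{thm:S boundary reg}), gives the bound for $\D_\gamma\p$. The delicate point — and where $\gamma<2s$ (and, in the model operators, $n\ge 4s-2\gamma$, which is what keeps near-boundary test functions from obstructing the inequality) enters — is verifying this weighted Nash inequality with precisely the exponent $\tfrac{2(2s-\gamma)}{n}$.

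For part (3) I would expand in eigenfunctions. By \Cref{lem:S boundary reg} the series $\p(t,x,y)=\sum_k e^{-\lambda_k t}\varphi_k(x)\varphi_k(y)$ converges in $\delta^\gamma C(\overline\Omega)$ in $x$ for fixed $y,t$, and by uniform convergence $\D_\gamma\p(t,\zeta,y)=\sum_k e^{-\lambda_k t}\D_\gamma\varphi_k(\zeta)\varphi_k(y)$. Splitting off the $k=1$ term and estimating the tail via $\|\varphi_k/\delta^\gamma\|_{C(\overline\Omega)}\le C\lambda_k^w$ from \eqref{eq:varphi-bound} (hence $|\D_\gamma\varphi_k(\zeta)|\le C\lambda_k^w$), the spectral gap $\lambda_k-\lambda_1\ge\lambda_2-\lambda_1>0$ for $k\ge2$, and the summability $\sum_k e^{-\lambda_k}\lambda_k^{2w}<\infty$ from \Cref{it:S boundary reg 5} of \Cref{lem:S boundary reg}, one gets for $t\ge1$
\[
\Big|\p(t,x,y)-e^{-\lambda_1 t}\varphi_1(x)\varphi_1(y)\Big|
\le C e^{-\lambda_1 t}e^{-(\lambda_2-\lambda_1)(t-1)}\delta(x)^\gamma\delta(y)^\gamma,
\]
and the analogous inequality for $\D_\gamma\p$. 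Since $\varphi_1\asymp\delta^\gamma$ (so $\varphi_1(x)\varphi_1(y)\ge c^2\delta(x)^\gamma\delta(y)^\gamma$) and $\D_\gamma\varphi_1\asymp1$, the relative tail tends to $0$, and choosing $T=T(\ee)$ with $Ce^{-(\lambda_2-\lambda_1)(t-1)}\le c^2\ee$ for $t\ge T$ gives \eqref{eq:asymptotic S large times} and its $\D_\gamma$-counterpart.
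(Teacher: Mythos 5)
Your overall architecture matches the paper's: part~(1) from the Sobolev inequality via a Nash-type argument, part~(2) via the ground-state (Doob $h$-) transform by $\varphi_1$, and part~(3) by splitting off the first term of the spectral series and using the spectral gap together with the polynomial bound $\|\varphi_k/\delta^\gamma\|_{C(\overline\Omega)}\le C\lambda_k^w$. Parts~(1) and~(3) are essentially the same as the paper's (the paper invokes Davies' Theorem~2.4.2 to go from \eqref{eq:L Sobolev} to the $L^2\to L^\infty$ bound rather than re-running the ODE/Nash computation, and it then factorizes $\Semi(t)=\Semi(t/2)\Semi(t/2)$ with the dual $L^1\to L^2$ bound instead of your Chapman--Kolmogorov/Cauchy--Schwarz step, but these are cosmetic differences; your version is correct). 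Your part~(3), with $\varphi_1\asymp\delta^\gamma$, $\D_\gamma\varphi_1\asymp 1$ and $\lambda_1$ simple, is exactly what the paper does.

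The genuine gap is in part~(2). You correctly identify the conjugate semigroup $\widetilde\Semi(t)=e^{\lambda_1 t}\varphi_1^{-1}\Semi(t)[\varphi_1\,\cdot\,]$ as the right object, and you correctly guess the target exponent $\tfrac{2(2s-\gamma)}{n}$, but the proposed weighted Nash inequality is stated as a goal rather than proved, and you flag it yourself as ``the delicate point''. A naive Hölder interpolation of $\|v\|_{L^{2^\star}}$ against $\int|v|\varphi_1$ does not produce the needed exponent, because one of the two endpoints carries the weight and the other does not; and the auxiliary restriction $n\ge 4s-2\gamma$ you mention is in fact not needed (it follows automatically from $n>2s$ and $\gamma>0$). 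The paper closes this gap concretely and without any weighted Nash machinery: from \eqref{eq:G estimate} one has the pointwise bound $\G(x,y)\le C\,\delta(y)^\gamma\,|x-y|^{-(n-2s+\gamma)}$, so in the quadratic form $\int g\,\overline{\Green}[g]=\int\!\int g(x)\varphi_1(x)\,\G(x,y)\,\varphi_1(y)^{-1}g(y)$ the boundary weight $\varphi_1(y)^{-1}\asymp\delta(y)^{-\gamma}$ cancels exactly against the $\delta(y)^\gamma$ in the Green's function bound, leaving a Riesz potential of order $s-\gamma/2>0$; the Hardy--Littlewood--Sobolev inequality and the duality argument of \cite[Section~7.8]{Bonforte+Sire+Vazquez2015} then yield the Sobolev inequality \eqref{eq:L Sobolev} for $\overline{\Ls}$ with $\alpha=\tfrac{2n}{n-2s+\gamma}$, and Davies' Theorem~2.4.2 with $\mu=\tfrac{n}{s-\gamma/2}>2$ gives $\overline{\p}(t,x,y)\le Ct^{-n/(2s-\gamma)}$. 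You should replace your ``plan'' for the weighted inequality with this explicit Green's-function cancellation; as it stands, your part~(2) is an outline with the hard step left open.
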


\begin{proof}

\begin{enumerate}
\item Recall \eqref{eq:L Sobolev},
\[
C_S\norm[L^{2^*}(\Omega)]{u}^2
\leq
\int_{\Omega}
    u\Ls u
\diff x,
    \qquad \forall u\in H^1_\Ls (\Omega),
\]
for $2^*=\frac{2n}{n-2s}=\frac{2(n/s)}{(n/s)-2}$. Applying \cite[Theorem 2.4.2]{Davies1989} (with $\mu=\frac{n}{s}>2$), the Sobolev inequality is equivalent to an $L^2 \to L^\infty$ bound of the form
\[
\norm[L^\infty(\Omega)]
    {\Semi(t)[u_0]}
\leq Ct^{-\frac{n}{4s}}
    \norm[L^2(\Omega)]
        {u_0},
\qquad \forall u_0\in L^2(\Omega).
\]
Since $\Semi(t)$ is self-adjoint on $L^1\cap L^2(\Omega)$, we also have the duality bound from $L^1 \to L^2$,
\[\begin{split}
\norm[L^2(\Omega)]
    {\Semi(t)[u_0]}
&=\sup_{
    \norm[L^2(\Omega)]{v_0}\leq1
}
    \angles{u_0,\Semi(t)[v_0]}
\leq
    \sup_{
        \norm[L^2(\Omega)]{v_0}\leq 1
    }
    \norm[L^1(\Omega)]{u_0}
    \bigl(
        Ct^{-\frac{n}{4s}}
        \norm[L^2(\Omega)]{v_0}
    \bigr)\\
&\leq Ct^{-\frac{n}{4s}}
    \norm[L^1(\Omega)]
        {u_0},
\end{split}\]
for all $u_0\in L^1(\Omega)$. Factorizing $\Semi(t)=\Semi(\frac{t}{2})\Semi(\frac{t}{2})$,
\[
\norm[L^\infty(\Omega)]{
    \Semi(t)[u_0]
}
\leq
    Ct^{-\frac{n}{4s}}
    \norm[L^2(\Omega)]{
        \Semi(\tfrac{t}{2})[u_0]
    }
\leq
    Ct^{-\frac{n}{2s}}
    \norm[L^1(\Omega)]{u_0},
        \qquad \forall u_0\in L^1(\Omega).
\]
Now we consider $u_0=u_j$ for the sequence $u_j=|B_{\frac1j}|^{-1}\chi_{y + B_{\frac1j}}$ which satisfies $u_j\geq 0$, $\norm[L^1(\Omega)]{u_j}=1$ and $u_j\to\delta_y$, we get
\[
0\leq \p(t,x,y) \leq Ct^{-\frac{n}{2s}},
    \qquad \forall t>0,\,x,y\in\Omega.
\]
\item
Observing from \cite[Lemma 4.2.2]{Davies1989} and \eqref{eq:S estimate} that the heat semigroup for the conjugate operator
$\overline{\Ls}=\varphi_1\Ls(\varphi_1^{-1}\cdot)$ is ultracontractive with kernel $\overline{\p}(t,x,y)=\varphi_1(x)^{-1}\p(t,x,y)\varphi_1(y)^{-1}$, it suffices to establish a Sobolev inequality for $\overline{\Ls}$ similarly to \eqref{eq:L Sobolev}.
Notice that $\overline \varphi_k = \varphi_1 \varphi_k$ are eigenfunctions of this operator, associated to the eigenvalues $\lambda_k$ of $\Ls$. These functions $\varphi_k$ form a basis of $\varphi_1 L^2 (\Omega)$.  Hence, the associated Dirichlet problem has a well-defined Green kernel
\begin{equation*}
	\overline \Green [f] = \varphi_1 \Green\left[ \frac{f}{\varphi_1} \right], \qquad \forall f \in \varphi_1 L^2 (\Omega).
\end{equation*}
This kernel can be suitably extended.

In fact, for any $g\in C_c^\infty (\Omega)$, using \eqref{eq:G estimate} and Hardy--Littlewood--Sobolev inequality we have
\[\begin{split}
    \left(
        \int_{\Omega}
            g \overline \Green [g]
        \diff x
    \right)^{\frac12}
&=
    \left(
        \int_{\Omega}
            g \varphi_1
            \Green[\varphi_1^{-1} g]
        \diff x
    \right)^{\frac12}\\
&\leq
    C\left(
        \int_{\Omega}
            g(x)\varphi_1(x)
            \int_{\Omega}
                \dfrac{
                    \varphi_1(y)
                }{
                    |x-y|^{n-2s+\gamma}
                }
                \varphi_1(y)^{-1}g(y)
            \diff y
        \diff x
    \right)^{\frac12}\\
&\leq
    C\norm[L^\infty(\Omega)]{\varphi_1}^{\frac12}
    \left(
        \int_{\Omega}\int_{\Omega}
            \dfrac{
                g(x)g(y)
            }{
                |x-y|^{n-2s+\gamma}
            }
        \diff y\diff x
    \right)^{\frac12}\\
&\leq
    C\norm[L^\infty(\Omega)]{\varphi_1}^{\frac12}
    \left(
        \int_{\Omega}
            \abs{
                I_{s-\gamma/2}[g_0]
            }^2
        \diff x
    \right)^{\frac12}\\
&\leq
    C\norm[L^\infty(\Omega)]{\varphi_1}^{\frac12}
    \norm[L^{\frac{2n}{n+2s-\gamma}}]{g},
\end{split}\]
where $g_0$ denotes the extension of $g$ by $0$ onto $\re^n$ and $I_{s-\gamma/2}$ denotes the Riesz potential of order $s-\gamma/2>0$. Now the duality argument in \cite[Section 7.8]{Bonforte+Sire+Vazquez2015} implies that \eqref{eq:L Sobolev} holds for $\overline{\Ls}$ with $\alpha  =\bigl(\frac{2n}{n+2s-\gamma}\bigr)' =\frac{2n}{n-2s+\gamma}=\frac{2(\frac{n}{s-\gamma/2})}{(\frac{n}{s-\gamma/2})-2}$.
Then, similarly to \Cref{it:S estimate 1}, \cite[Theorem 2.4.2]{Davies1989} applies with $\mu=\frac{n}{s-\gamma/2}>2$ and we conclude that
\[
0\leq \overline{\p}(t,x,y)
\leq Ct^{-\frac{n}{2s-\gamma}},
\]
as desired. The estimate for the $\gamma$-normal derivative follows immediately by dividing by $\delta(x)^\gamma$ and passing to the limit $x\to\zeta$.

\item
For the last estimate we notice from \Cref{thm:weyl} and \eqref{eq:varphi-bound} that
\[
\p(t,x,y)
=\sum_{m=1}^{\infty}
    e^{-\lambda_m t}
    \varphi_m(x)
    \varphi_m(y)
\]
is absolutely convergent for any $t > 0$,
and
\[\begin{split}
\abs{
    \dfrac{
        \p(t,x,y)
    }{
        e^{-\lambda_1 t}
        \varphi_1(x)
        \varphi_1(y)
    }
    -1
}
&\leq
    \sum_{m=2}^{\infty}
        e^{(\lambda_1 - \lambda_m)t }
        \dfrac{\varphi_m(x)}{\varphi_1(x)}
        \dfrac{\varphi_m(y)}{\varphi_1(y)}\\
&\leq
    Ce^{(\lambda_1-\lambda_2)t/2}
    \sum_{m=2}^{\infty}
        e^{(\lambda_1 -\lambda_m)t/2}
        \lambda_m^{2w}\\
&\leq
    Ce^{(\lambda_1 -\lambda_2)T/2},
\end{split}\]
which can be smaller than $\varepsilon$  by choosing $T$ large since $\lambda_1$ is simple.  Again, the estimate for $\D_\gamma\p$ is a direct consequence of \eqref{eq:asymptotic S large times}.  \qedhere
\end{enumerate}
\end{proof}

\begin{remark}
	\label{rem:Hopf for t large}
	Notice that \eqref{eq:asymptotic S large times} yields that there exists $T > 0$ and $C > 0$ such that we have that for $u_0 \ge 0$
	\begin{equation*}
		\frac 1 {C} \int_\Omega u_0 \delta^\gamma \le \frac{ \Semi(t) [u_0] } {\delta^\gamma} \le {C} \int_\Omega u_0 \delta^\gamma \qquad \forall t \ge T.
	\end{equation*}
	Hence, if $0 \le u_0 \notin L^1 (\Omega, \delta^\gamma)$ then $\Semi(t) [u_0] \equiv +\infty$ for $t$ large enough.
\end{remark}

\section{A candidate for solution with semigroup representation}
\label{sec:general form}

    The heuristics in \Cref{sec:parabolic intuition} suggests that the general form of the solution of \eqref{eq:main}--\eqref{eq:init}  is given by
	\begin{equation}
	\label{eq:general form of the solution}
		\Heat[u_0, f, h ] (t,x) \defeq \Semi (t) [u_0] (x) + \int_0^t \Semi(t-\sigma) [f(\sigma, \cdot)] (x) \diff \sigma+  \int_0^t \int_{\partial \Omega} \D_\gamma \p (t-\sigma, \zeta, x) h(\sigma, \zeta) \diff \zeta \diff \sigma.
	\end{equation}
	We will first study the continuity properties with respect to $u_0$ and $f$, and the properties with respect to $h$ will come later (see \Cref{sec:proof main result general}).
	In some situations we will use the letter $\phi$ instead of $f$ because in application this continuity result is applied to the test function $\phi\in L^\infty(0,T;\delta^\gamma L^\infty(\Omega))$ in the weak-dual formulation \eqref{eq:weak phi} below.

	\subsection{Functional space continuity of \texorpdfstring{$\Heat$}{H} when \texorpdfstring{$h = 0$}{h=0}}

	\begin{theorem}
		\label{thm:continuity of H}
         Assume \eqref{eq:G estimate}, \eqref{eq:G delta Linf to delta C} and \eqref{eq:S submarkovian}. 
        Then we have that:

        \begin{enumerate}
		\item $
		\Heat : L^2 (\Omega) \times L^2 ((0,T) \times \Omega )  \times\{0\} \longrightarrow L^2 ((0,T) \times \Omega ) )
		$
		is continuous with estimate
		\begin{equation*}
		\| \Heat[u_0, f, 0] (t, \cdot) \|_{L^2 (\Omega)}^{2} \le e^{-\lambda_1 t} \|u_0\|_{L^2 (\Omega)}^{2}
		+ \int_0^t \| f (\sigma, \cdot) \|_{L^2 (\Omega)}^{2} \diff \sigma.
		\end{equation*}
If $u_0, f \ge 0$ then $\Heat[u_0,f,0] \ge 0$.
		\item
		$
		\Heat:\set{0}\times \delta^\gamma L^\infty((0,T)\times\Omega) \times \set{0}
		\longrightarrow \delta^\gamma L^\infty((0,T)\times \Omega)
		$
		is continuous with estimate
		\begin{equation}
		\label{eq:H estimates proof 1}
			\left \| \dfrac{
				\Heat[0,\phi,0](t,\cdot )
			}{
				\delta^\gamma
			}
			\right\| _{L^\infty (\Omega)}
    \le C \int_0^t e^{-\lambda_1 \sigma} \diff \sigma
    \norm[L^\infty((0,T)\times\Omega)]
			{\dfrac{\phi}{\delta^\gamma}}.
		\end{equation}
     If $\phi\geq 0$, then $\Heat[0,\phi,0]\geq 0$.
		\item $\D_\gamma \Heat:\set{0}\times \delta^\gamma L^\infty((0,T)\times \Omega) \times \set{0}
		\longrightarrow L^\infty((0,T) \times \partial  \Omega)$ is continuous and \eqref{eq:D gamma H 0 f 0} holds. Moreover, there holds the estimate
		\begin{equation}
		\label{eq:H estimates proof 2}
			\left \|
                \D_\gamma\Heat[0,\phi,0](t,\cdot)
			\right\| _{L^\infty (\partial\Omega)}
    \le C \int_0^t e^{-\lambda_1 \sigma} \diff \sigma
    \norm[L^\infty((0,T)\times\Omega)]
			{\dfrac{\phi}{\delta^\gamma}}.
		\end{equation}
    If $\phi\geq 0$, then $\D_\gamma \Heat[0,\phi,0]\geq 0$.
	\end{enumerate}
	\end{theorem}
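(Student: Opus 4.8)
The starting point for all three parts is Duhamel's formula \eqref{eq:Duhamel}, which for $h=0$ reads $\Heat[u_0,f,0](t,\cdot)=\Semi(t)[u_0]+\int_0^t\Semi(t-\sigma)[f(\sigma,\cdot)]\diff\sigma$. For Part (1) the plan is the classical parabolic energy estimate. Writing $u(t,\cdot)=\Heat[u_0,f,0](t,\cdot)=\sum_k c_k(t)\varphi_k$, the coefficients solve $c_k'+\lambda_k c_k=\langle f(t,\cdot),\varphi_k\rangle$ with $c_k(0)=\langle u_0,\varphi_k\rangle$; multiplying by $c_k$, absorbing the forcing term with Young's inequality into $\lambda_k c_k^2\ge\lambda_1 c_k^2$, summing over $k$, and applying Grönwall's lemma gives the stated bound (equivalently, one tests the equation against $u$ and uses $\langle\Ls u,u\rangle=\sum_k\lambda_k\langle u,\varphi_k\rangle^2\ge\lambda_1\norm[L^2(\Omega)]{u}^2$). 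Integrating the pointwise-in-$t$ bound over $(0,T)$ yields continuity of $\Heat$ into $L^2((0,T)\times\Omega)$. If $u_0,f\ge0$, then $\Heat[u_0,f,0]\ge0$ by linearity together with the positivity preservation \eqref{eq:max prin} of $\Semi(t)$.

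For Part (2) the key auxiliary estimate is $\Semi(\tau)[\delta^\gamma](x)\le Ce^{-\lambda_1\tau}\delta(x)^\gamma$ for every $\tau>0$ and $x\in\Omega$. Since $\varphi_1\asymp\delta^\gamma$ by \eqref{eq:regularity of the eigenf}, we have $\delta^\gamma\le C\varphi_1$, and the order preservation of $\Semi(\tau)$ (a consequence of \eqref{eq:max prin}) together with $\Semi(\tau)[\varphi_1]=e^{-\lambda_1\tau}\varphi_1$ gives $\Semi(\tau)[\delta^\gamma]\le Ce^{-\lambda_1\tau}\varphi_1\le Ce^{-\lambda_1\tau}\delta^\gamma$. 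For general $\phi$ one has $|\phi(\sigma,\cdot)|\le\norm[L^\infty((0,T)\times\Omega)]{\phi/\delta^\gamma}\,\delta^\gamma$, hence by \eqref{eq:abs value comparison} that $|\Semi(t-\sigma)[\phi(\sigma,\cdot)](x)|\le\norm[L^\infty((0,T)\times\Omega)]{\phi/\delta^\gamma}\,\Semi(t-\sigma)[\delta^\gamma](x)\le C\norm[L^\infty((0,T)\times\Omega)]{\phi/\delta^\gamma}\,e^{-\lambda_1(t-\sigma)}\delta(x)^\gamma$; integrating over $\sigma\in(0,t)$ and substituting $\sigma\mapsto t-\sigma$ gives \eqref{eq:H estimates proof 1}. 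Non-negativity of $\Heat[0,\phi,0]$ when $\phi\ge0$ is immediate from \eqref{eq:max prin} applied to each $\Semi(t-\sigma)[\phi(\sigma,\cdot)]$.

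For Part (3), fix $\sigma<t$; then $\phi(\sigma,\cdot)\in\delta^\gamma L^\infty(\Omega)\subset M(\Omega,\delta^\gamma)$, so by \Cref{thm:S boundary reg} the function $\Semi(t-\sigma)[\phi(\sigma,\cdot)]=\int_\Omega\p(t-\sigma,\cdot,y)\phi(\sigma,y)\diff y$ lies in $\delta^\gamma C(\overline\Omega)$, and the bound $0\le\p(\tau,x,y)\le C(\tau)\delta(x)^\gamma\delta(y)^\gamma$ — valid before passing to the limit — permits dominated convergence in $y$ as $x\to\zeta$, giving $\D_\gamma\Semi(t-\sigma)[\phi(\sigma,\cdot)](\zeta)=\int_\Omega\D_\gamma\p(t-\sigma,\zeta,y)\phi(\sigma,y)\diff y$. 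To pass the limit $x\to\zeta$ through the $\sigma$-integral, use the uniform-in-$x$ bound $|\Semi(t-\sigma)[\phi(\sigma,\cdot)](x)|/\delta(x)^\gamma\le C\norm[L^\infty((0,T)\times\Omega)]{\phi/\delta^\gamma}\,e^{-\lambda_1(t-\sigma)}$ from Part (2), which is integrable on $(0,t)$; a second application of dominated convergence shows that $\lim_{x\to\zeta}\Heat[0,\phi,0](t,x)/\delta(x)^\gamma$ exists and equals $\int_0^t\int_\Omega\D_\gamma\p(t-\sigma,\zeta,y)\phi(\sigma,y)\diff y\diff\sigma$, which is \eqref{eq:D gamma H 0 f 0}. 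Passing to the limit $x\to\zeta$ in \eqref{eq:H estimates proof 1} gives \eqref{eq:H estimates proof 2}, and non-negativity follows from $\D_\gamma\p\ge0$, itself a consequence of $\p\ge0$ (see \Cref{thm:S boundary reg}).

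The delicate point is Part (3): interchanging the two limiting operations — the $\gamma$-normal derivative $x\to\zeta$ and the time integration — with each other and with the spatial integral defining $\Semi$. This rests entirely on the quantitative kernel bound $\p(\tau,x,y)\le C(\tau)\delta(x)^\gamma\delta(y)^\gamma$ of \Cref{thm:S boundary reg} and the exponential decay in $\tau$ obtained in Part (2), which together furnish the dominating functions required by the dominated convergence theorem. Parts (1) and (2) are routine once the comparison $\Semi(\tau)[\delta^\gamma]\le Ce^{-\lambda_1\tau}\delta^\gamma$ is in hand.
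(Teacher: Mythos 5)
Your proof is correct and follows essentially the same route as the paper: Part~(1) via the eigen-decomposition, Part~(2) via the comparison $\delta^\gamma\lesssim\varphi_1$, positivity preservation and $\Semi(\tau)[\varphi_1]=e^{-\lambda_1\tau}\varphi_1$ (the paper writes this directly through the kernel identity $\int_\Omega\p(\sigma,x,y)\varphi_1(y)\diff y=e^{-\lambda_1\sigma}\varphi_1(x)$, which is the same computation), and Part~(3) by justifying the interchange of the $\gamma$-normal limit with the time integral. The only genuine difference is organizational in Part~(3): you apply dominated convergence twice (first in $y$ for each fixed $\sigma$, then in $\sigma$ using the uniform-in-$x$ bound $|\Semi(t-\sigma)[\phi(\sigma,\cdot)](x)|/\delta(x)^\gamma\le Ce^{-\lambda_1(t-\sigma)}\norm[L^\infty]{\phi/\delta^\gamma}$ as integrable dominator), whereas the paper splits $\int_0^t=\int_0^\ee+\int_\ee^t$, bounds the small-time tail by a modulus $\omega(\ee)$ using Part~(2), sandwiches $\liminf$ and $\limsup$, and lets $\ee\to0$ via monotone convergence. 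Both arguments rest on exactly the same two ingredients — the kernel bound $\p(\tau,x,y)\le C(\tau)\delta(x)^\gamma\delta(y)^\gamma$ from \Cref{thm:S boundary reg} and the exponential decay from Part~(2) — so the content is the same; your two-step dominated-convergence phrasing is arguably slightly more direct, while the paper's $\ee$-splitting makes the role of the semigroup regularization $\Semi(\ee/2):L^2\to\delta^\gamma C(\overline\Omega)$ more visible.
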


\begin{example}
It is instructive to notice that when $\phi(t,x)=\varphi_1(x)$, we have for $x\in\Omega$ and $\zeta\in\partial\Omega$,
\[
\Heat[0,\varphi_1,0](t,x)
=\left(
\int_{0}^{t}
    e^{-\lambda_1 \sigma}
\diff\sigma
\right)\varphi_1(x),
    \qquad
\D_\gamma\Heat[0,\varphi_1,0](t,\zeta)
=\left(
\int_{0}^{t}
    e^{-\lambda_1 \sigma}
\diff\sigma
\right)
\dfrac{\varphi_1}{\delta^\gamma}(\zeta).
\]
\end{example}

\begin{proof}[Proof of \Cref{thm:continuity of H}]
Due to the linearity and the non-negativity of the kernels, it suffices to work with $\phi \ge 0$.
\begin{enumerate}
\item
The $L^2$ theory follows directly from the semigroup representation and eigendecomposition.
\item
By Duhamel's formula \eqref{eq:Duhamel} and the fact that $\p\geq0$ (which follows from \eqref{eq:S submarkovian}), we have
\begin{equation*}
\begin{split}
\dfrac{
        \Heat[0,\phi,0](t,x)
    }{
        \varphi_1(x)
    }
&=\int_0^t\int_{\Omega}
    \dfrac{
        \p(\sigma,x,y)
    }{
        \varphi_1(x)
    }
     \phi(t-\sigma,y)
	\diff y\diff \sigma\\
&\leq
    \norm[L^\infty((0,T)\times\Omega)]
        {\dfrac{\phi}{\varphi_1}}
    \int_0^t\int_{\Omega}
        \dfrac{
            \p(\sigma,x,y)
        }{
            \varphi_1(x)
        }
        \varphi_1(y)
    \diff y\diff \sigma\\
&=
    \norm[L^\infty((0,T)\times\Omega)]
        {\dfrac{\phi}{\varphi_1}}
    \int_0^t
        e^{-\lambda_1 \sigma}
    \diff \sigma. \\
   \end{split}
\end{equation*}
This proves \eqref{eq:H estimates proof 1}.
\item
In order to check that $\D_\gamma$ is well-defined and bounded we fix $t > 0$ and $\zeta \in \partial \Omega$ and, for any $x\in\Omega$, we split 
\begin{align*}
\frac{\Heat[0,\phi,0] (t,x)}{\delta (x)^\gamma}
	= \int_{\ee}^{t}\int_{\Omega}
        \dfrac{
            \p(\sigma,x,y)
        }{
            \delta(x)^\gamma
        }
        \phi(t-\sigma,y)
    \diff y\diff\sigma
    + \int_{0}^{\ee}\int_{\Omega}
    \dfrac{
    	\p(\sigma,x,y)
    }{
    	\delta(x)^\gamma
    }
    \phi(t-\sigma,y)
    \diff y\diff\sigma.
\end{align*}
The second term is controlled by \eqref{eq:H estimates proof 1} we have that
\begin{equation*}
	 \int_{0}^{\ee}\int_{\Omega}
	\dfrac{
		\p(\sigma,x,y)
	}{
		\delta(x)^\gamma
	}
	\phi(t-\sigma,y)
	\diff y\diff\sigma
	\le
	C \norm[L^\infty((0,T)\times\Omega)]
	{\dfrac{\phi}{\delta^\gamma}}
	\lambda_1^{-1} (1 - \ee^{-\lambda_1 \ee}) = \vcentcolon \omega(\ee) .
\end{equation*}
Therefore, we have that
\begin{equation*}
\int_{\ee}^{t}\int_{\Omega}
\dfrac{
	\p(\sigma,x,y)
}{
	\delta(x)^\gamma
}
\phi(t-\sigma,y)
\diff y\diff\sigma \le \frac{\Heat[0,\phi,0] (t,x)}{\delta (x)^\gamma}
\le \int_{\ee}^{t}\int_{\Omega}
\dfrac{
	\p(\sigma,x,y)
}{
	\delta(x)^\gamma
}
\phi(t-\sigma,y)
\diff y\diff\sigma +  \omega(\ee) .
\end{equation*}

The first term, on the other hand, admits a limit as $x \to \zeta \in \partial\Omega$ in view of \eqref{eq:S boundary regularity} so
\begin{multline*}
\int_{\ee}^{t}\int_{\Omega}
\D_\gamma
\p(\sigma,x,y)
\phi(t-\sigma,y)
\diff y\diff\sigma
\\
\le
\liminf_{x \to \zeta}
\frac{\Heat[0,\phi,0] (t,x)}{\delta (x)^\gamma}
\le
\limsup_{x \to \zeta}
\frac{\Heat[0,\phi,0] (t,x)}{\delta (x)^\gamma}  \\
\le
\int_{\ee}^{t}\int_{\Omega}
\D_\gamma
\p(\sigma,x,y)
\phi(t-\sigma,y)
\diff y\diff\sigma
+  \omega(\ee) .
\end{multline*}
As $\ee \to 0$ we can apply the monotone convergence theorem to show that the $\limsup$ and $\liminf$ coincide, and \eqref{eq:H estimates proof 1} to give an estimate on the $\limsup$,
\begin{equation*}
\lim_{x \to \zeta}
\frac{\Heat[0,\phi,0] (t,x)}{\delta (x)^\gamma}
= \int_{0}^{t}\int_{\Omega}
\D_\gamma
\p(\sigma,x,y)
\phi(t-\sigma,y)
\diff y\diff\sigma \leq C
\int_{0}^{t}
e^{-\lambda_1 \sigma}
\diff\sigma
\norm[L^\infty((0,T)\times\Omega)]
{\dfrac{\phi}{\delta^\gamma}}. \qedhere
\end{equation*}
\end{enumerate}
\end{proof}

\subsection{Compactness theory}

	We introduce a second estimate, that will allow us to pass to the limit
	
	\begin{lemma}[Space-time uniform integrability]
		\label{lem:uniform integrability}
		Let  $A \subset \Omega$, $t_0 \ge 0$ and $h>0$. Then
		\begin{equation}
		\int_{t_0}^{t_0+h} \int_A |\Heat[u_0,f,0]  | \delta^\gamma  \le \omega_T (h) \omega (|A|)   \left( 	\int_\Omega |u_0 (x)| \delta^\gamma \diff x + \int_\Omega \int_0^T | f(t, x) | \delta^\gamma \diff t \diff x \right) .
		\end{equation}
		In fact, we can take $\omega_T(h)=Ch^{\frac 1 2}$ and $\omega(|A|)\le C |A|^{\frac 1 {2q}}$ for some $q > 1$ large. In particular, taking $A=\Omega$, $t_0=0$ and $h=T$,
		$$
		\Heat:L^1 (\Omega, \delta^\gamma)\times L^1(0,T;L^1 (\Omega, \delta^\gamma)) \times \set{0}
		\longrightarrow L^1(0,T;L^1 (\Omega, \delta^\gamma)).
		$$
	\end{lemma}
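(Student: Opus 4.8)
The plan is to establish the key inequality by reducing to the case $f=0$ (via a trivial splitting $\Heat[u_0,f,0]=\Heat[u_0,0,0]+\Heat[0,f,0]$) and then handling the two pieces with the same machinery, since both $\Semi(t)[u_0]$ and the Duhamel term $\int_0^t\Semi(t-\sigma)[f(\sigma,\cdot)]\diff\sigma$ are averages of the heat semigroup applied to non-negative data (after replacing $u_0,f$ by $|u_0|,|f|$, using \eqref{eq:abs value comparison}). First I would record that, since $\varphi_1\asymp\delta^\gamma$, it suffices to bound $\int_{t_0}^{t_0+h}\int_A \Semi(t)[|u_0|]\varphi_1\diff x\diff t$ and the analogous Duhamel quantity by the right-hand side. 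The $\varphi_1$-weighted mass decays like $e^{-\lambda_1 t}\int_\Omega|u_0|\varphi_1$ by \Cref{prop:decay of L1 delta}, which immediately gives the crude bound with $\omega_T(h)\leq Ch$ (integrating the constant over a time interval of length $h$) and $\omega(|A|)\leq C$; the point of the lemma is the refined moduli $Ch^{1/2}$ and $C|A|^{1/(2q)}$, and for these I need to exploit the extra regularity/integrability of $\Semi(t)$ coming from \Cref{thm:S estimate}.

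The core of the argument is a Hölder-in-space-and-time estimate. Fix $q>1$ large (with conjugate $q'$ close to $1$). On a set $A$ with $|A|$ small, write
\[
\int_A \Semi(t)[|u_0|]\,\delta^\gamma\diff x
\leq |A|^{1/(2q)}\Big(\int_\Omega\big(\Semi(t)[|u_0|]\big)^{q'}\delta^{\gamma q'}\,\delta^{-\gamma(q'-1)\cdot\frac{q'}{\,}}\cdots\Big)
\]
— more cleanly, I would interpolate between the $L^1(\Omega,\delta^\gamma)$ bound of \Cref{prop:decay of L1 delta} and an $L^\infty$-type bound for $\Semi(t)[|u_0|]/\delta^\gamma$ valid for $t$ bounded away from $0$. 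Concretely, by \Cref{thm:S estimate}\Cref{it:S estimate 1} (or the sharper $\delta^\gamma$-weighted version when $\gamma<2s$) together with $\int_\Omega\p(t,x,y)\diff x\leq1$, one gets $\|\Semi(t)[|u_0|]\|_{L^\infty(\Omega)}\leq C(t)\|u_0\|_{L^1(\Omega,\delta^\gamma)}$ for $t>0$, hence $\Semi(t)[|u_0|]\in L^p(\Omega,\delta^\gamma)$ for every $p$ with a norm controlled by $\|u_0\|_{L^1(\Omega,\delta^\gamma)}$ times a power of $t^{-1}$. Splitting the time integral $\int_{t_0}^{t_0+h}$ into $t<h$ and $t\geq h$ — or rather using the short-time part only through the $L^1$ bound and the far part through the $L^p$ bound — produces, after Hölder in $t$ over an interval of length $h$, a factor $h^{1/q'}$ times a harmless integral of $t^{-\beta}$ which is finite for $q$ large (since $\beta\to0$ as $q'\to1$ is not quite right — rather $\beta q'<1$ must be arranged by the freedom in choosing the exponent). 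Tracking the exponents gives $\omega_T(h)=Ch^{1/2}$ (taking the split point proportional to $h$ and optimizing) and $\omega(|A|)=C|A|^{1/(2q)}$.

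For the Duhamel term the same estimate is applied under the inner integral: $\int_{t_0}^{t_0+h}\int_A\big(\int_0^t\Semi(t-\sigma)[|f(\sigma,\cdot)|]\diff\sigma\big)\delta^\gamma\diff x\diff t$; after Fubini in $(t,\sigma)$ and the change of variables $\tau=t-\sigma$, the $\sigma$-integral of $\|f(\sigma,\cdot)\|_{L^1(\Omega,\delta^\gamma)}$ is exactly $\|f\|_{L^1(0,T;L^1(\Omega,\delta^\gamma))}$, and the $\tau$-integral carries the same $h^{1/2}|A|^{1/(2q)}$ behaviour as before. The main obstacle, and where care is needed, is the interplay near $t=0$: the bound $C(t)\sim t^{-n/(2s)}$ blows up, so the exponents in the Hölder splitting must be chosen so that the resulting power of $t$ is integrable near $0$ after being multiplied by the gain from $|A|$ and $h$; this forces $q$ (equivalently the interpolation parameter) to be large, which is precisely why the statement only claims $|A|^{1/(2q)}$ for some large $q$ rather than a fixed exponent. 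Once the two pieces are combined and $\varphi_1\asymp\delta^\gamma$ is used to pass back to the $\delta^\gamma$ weight, the special cases $A=\Omega$, $t_0=0$, $h=T$ give the claimed continuity $\Heat:L^1(\Omega,\delta^\gamma)\times L^1(0,T;L^1(\Omega,\delta^\gamma))\times\{0\}\to L^1(0,T;L^1(\Omega,\delta^\gamma))$.
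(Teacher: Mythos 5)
Your proposal takes a genuinely different route from the paper. The paper works dually: it tests the weak-dual identity against $\phi = \chi_{[t_0,t_0+h]}(t)\varphi_1(x)$ to get the time gain, and against $\phi = \chi_A(x)\varphi_1(x)$ to get the space gain. The decisive observation for the space modulus is that the adjoint Duhamel quantity satisfies
\begin{equation*}
\Heat[0,\chi_A\varphi_1,0](t,x)
=\int_0^t\Semi(\sigma)[\chi_A\varphi_1](x)\diff\sigma
\leq\int_0^\infty\Semi(\sigma)[\chi_A\varphi_1](x)\diff\sigma
=\Green[\chi_A\varphi_1](x)
\leq C|A|^{1/q}\delta(x)^\gamma,
\end{equation*}
where the last bound comes from the \emph{elliptic} mapping property $\Green:\delta^\gamma L^q(\Omega)\to\delta^\gamma L^\infty(\Omega)$. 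This integrates the heat kernel over all time and therefore sidesteps the short-time singularity entirely. You instead propose a primal argument via the bound $\|\Semi(t)[u_0]\|_{L^\infty(\Omega)}\le C(t)\|u_0\|_{L^1(\Omega,\delta^\gamma)}$ from \Cref{thm:S estimate} interpolated against the $L^1(\Omega,\delta^\gamma)$ contraction; this is workable, but it forces you to verify $\int_0^T C(t)^{1/q}\diff t<\infty$ for $q$ large, an integrability check (doable via \Cref{rmk:rigorous parabolic}) that the paper never has to perform. Two things you should tighten: (i) the step that converts the separate time gain ($\sim h$) and space gain ($\sim|A|^{1/q}$) into the multiplicative $h^{1/2}|A|^{1/(2q)}$ is a Cauchy--Schwarz over $(0,T)\times\Omega$ splitting $\chi_{[t_0,t_0+h]}\chi_A|u|\delta^\gamma$ into a time-localized half and a space-localized half; the paper does this explicitly as Step 3, whereas your proposal buries it inside a vague ``Hölder in space-and-time.'' (ii) Your sentence claiming that ``Hölder in $t$ produces a factor $h^{1/q'}$'' does not give $h^{1/2}$ (for $q'$ near $1$ it gives roughly $h$); the $1/2$ comes from the Cauchy--Schwarz combination, not from Hölder in time alone, and the statement ``tracking the exponents gives $\omega_T(h)=Ch^{1/2}$'' is where your sketch stops short of a proof.
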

	\begin{proof}First assume $u_0, f \ge 0$.
		
		\noindent \textbf{Step 1. Time compactness.} We take $\phi(t,x) = \chi_{[t_0,t_0+h]} (t) \varphi_1 (x)$. We can solve directly that
		\begin{equation*}
		\Heat[0, \phi, 0] (t,x) = \varphi_1 (x) \int_0^t e^{-\lambda_1 (t-\sigma)}  \chi_{[t_0,t_0+h]} (\sigma)  \diff \sigma
		\le %
		h \delta (x) ^\gamma.
		\end{equation*}
		Hence
		\begin{equation*}
		\int_{t_0}^{t_0+h} \int_A |u| \delta^\gamma
		\le  C\int_{t_0}^{t_0+h} \int_\Omega |u| \varphi_1 \le
		C h  \left( 	\int_\Omega |u_0 (x)| \delta^\gamma \diff x + \int_\Omega \int_0^T | f(t, x) | \delta^\gamma \diff t \diff x \right) .
		\end{equation*}
		
		\noindent \textbf{Step 2. Space compactness.} We take $\phi(t,x) = \chi_A(x)\varphi_1(x)$ in the weak-dual formulation.
		Applying an argument similar to \cite{abatangelo+gc+vazquez2019} one can show that, for some $p>1$ small
		\begin{equation*}
		\sup_{x \in \Omega}  \int_\Omega \left( \frac{ \G(x,y) \delta(y)^\gamma}{ \delta(x)^\gamma } \right)^p \diff y
  \leq C  < \infty.
		\end{equation*}
		Therefore, for some $q< \infty$ large, $\G: \delta^\gamma L^q (\Omega) \to \delta^\gamma L^\infty (\Omega)$. In particular
		\begin{equation*}
		\left \| \frac{\Green[\chi_A \varphi_1]}{\delta^\gamma} \right \|_{L^\infty(\Omega)}
\le  C  \left \| \frac{\chi_A \varphi_1}{\delta^\gamma} \right \|_{L^q(\Omega)}
\le   C  \left \| \frac{\varphi_1}{\delta^\gamma} \right \|_{L^\infty(\Omega)} |A|^{\frac 1 q}.
		\end{equation*}
		Using \eqref{eq:S submarkovian} and the fact that $\G(x,y) = \int_0^\infty \p (t,x,y) \diff t$ we have
		\begin{align*}
		\Heat[0, \phi ,0](t,x)
		&= \int_0^t \Semi(t-\sigma) [  \chi_A  \varphi_1 ] (x) \diff \sigma \\
		&= \int_0^t \Semi(\sigma) [\chi_A\varphi_1] (x) \diff \sigma \\
		&\le \int_0^\infty \Semi(\sigma) [  \chi_A \varphi_1  ] (x) \diff \sigma \\
		&= \Green [\chi_A\varphi_1] (x) \\
		&\le \omega (|A|)  \delta(x) ^\gamma .
		\end{align*}
		Thus
		\begin{equation*}
		\int_0^T \int_A |u| \delta^\gamma
		\leq C\int_0^T \int_A |u|\varphi_1
		\le  \omega_{K} (|A|) \left( 	\int_\Omega |u_0 (x)| \delta^\gamma \diff x + \int_\Omega \int_0^T | f(t, x) | \delta^\gamma \diff t \diff x \right) .
		\end{equation*}
		
		\noindent \textbf{Step 3. Space-time compactness.}
		We write
		\begin{align*}
		\int_{t_0}^{t_0+h}\int_A |u| \delta^\gamma
		&\le  \int_0^T \int_A
		\left( \chi_{[t_0,t_0+h]}(t)  |u(t,x)|^{\frac 1 2} \delta^{\frac{\gamma}{2}}
		\right)
		\left( \chi_A (x)  |u(t,x)|^\frac 1 2 \delta^{\frac{\gamma}{2}}
		\right)\\
		& \le  \left(
		\int_{t_0}^{t_0+h} \int_A |u| \delta^\gamma
		\right)^{\frac 1 2}
		\left( 	\int_0^T \int_A |u| \delta^\gamma
		\right) ^{\frac 1 2}.
		\end{align*}
		Using the time compactness on the first term we recover and the space compactness on the second, we recover the result.
		
		\noindent \textbf{Step 4. Space-time compactness for signed data.}
		In the general case, we split
		\[
		u=\Heat[(u_0)_+,f_+,0]-\Heat[(u_0)_-,f_-,0].
		\]
		and apply Step 3 to each summand.
	\end{proof}

	\section{Weak formulation}
	\label{sec:weak formulation}
	First, let us do some formal computations, which will made fully rigorous below.
	{Integrating} the equation with a test function $\varphi \in {C^\infty(0,T; C_c^\infty(\Omega))}$ we have
	\begin{equation*}
		\int_\Omega \int_0^T u_t (t, x) \varphi (t, x) \diff t \diff x + \int_\Omega \int_0^T \Ls u (t, x) \varphi (t, x) \diff t  \diff x = 		\int_\Omega \int_0^T f(t, x) \varphi (t, x) \diff t  \diff x.
	\end{equation*}
	Integrating by parts in the equation through \eqref{eq:integration by parts} we have
	\begin{multline}
	\label{eq:weak}
   \int_\Omega u(T,x) \varphi(T,x) \diff x
   + \int_\Omega \int_0^T u (t, x) \left( - \varphi_t (t, x) + \Ls \varphi (t, x) \right) \diff t  \diff x  \\
    = 	\int_\Omega u_0 (x) \varphi (0,x) \diff x
    + \int_\Omega \int_0^T f(t, x) \varphi (t, x) \diff t \diff x
    + \int_0^T \int_{\partial \Omega} h(t, \zeta) \D_\gamma \varphi (t, \zeta) \diff t  \diff \zeta.
	\end{multline}

	\subsection{Weak-dual formulation}
	Let $\psi \in L^\infty((0,T) \times \Omega)$. We define $\overline \varphi \defeq \Heat [0, \phi, 0]$ is a solution of
	\begin{equation*}
		\begin{cases}
		\frac{\partial \overline \varphi}{\partial t} + \Ls \overline \varphi = \phi(t, x), & \text{ in } (0,T) \times \Omega, \\
		 \overline \varphi (t,x)  = 0, & \text{ in }(0,T) \times \Omega^c,  \\
		 \overline \varphi (0,x) = 0, & \text{ in }\Omega.
		\end{cases}
	\end{equation*}
	Then $\varphi (t,x) = \overline \varphi (T-t, x)$ satisfies
	\begin{equation*}
	\begin{cases}
	-\frac{\partial  \varphi}{\partial t} + \Ls  \varphi = \phi(T-t, x), & \text{ in }(0,T) \times \Omega, \\
	 \varphi (t,x)  = 0, & \text{ in }(0,T) \times \Omega^c, \\
	 \varphi (T,x) = 0, & \text{ in }\Omega.
	\end{cases}
	\end{equation*}
	Taking this as a test function we have the formulation
	\begin{multline}
	\label{eq:weak phi}
	 \int_\Omega \int_0^T  %
u (t, x) \phi(T-t,x) \diff t  \diff x
\\
	= 	\int_\Omega u_0 (x) \Heat[0,\phi,0] (T, x) \diff x %
+ \int_\Omega \int_0^T f(t, x)  \Heat[0,\phi,0] (T-t, x) \diff t \diff x
\\
+ \int_0^T \int_{\partial \Omega} h(t, \zeta) \D_\gamma  \Heat[0,\phi,0] (T-t, \zeta) \diff t  \diff \zeta, %
\quad \forall \phi \in \delta^\gamma L^\infty((0,T)\times \Omega).
	\end{multline}
Note that by \Cref{thm:continuity of H}, $\Heat[0,\phi,0](T,\cdot)\in \delta^\gamma L^\infty(\Omega)$, $\Heat[0,\phi,0](T-\cdot,\cdot)\in \delta^\gamma L^\infty((0,T)\times \Omega)$, and also $\D_\gamma\Heat[0,\phi,0](T-\cdot,\cdot)\in  L^\infty((0,T)\times\Omega)$.

	\subsection{An alternative formulation with \texorpdfstring{$\Green$}{G}}

	There is an alternative weak-dual formulation in analogy to the elliptic case.
	Letting $\varphi(t,x) = \Green[\psi (t, \cdot)] (x)$ and $\psi (T,x) = 0$ we can write
    \begin{equation}
		\label{eq:weak psi}
	\int_\Omega \int_0^T u
    \left( - \Green \left[ \frac{\partial \psi}{\partial t} \right] + \psi \right)
=  \int_\Omega u_0  \Green[\psi(0, \cdot)] + \int_\Omega \int_0^T f \Green [\psi]
+ \int_0^T \int_{\partial \Omega} h  \D_\gamma \Green[\psi],
	\end{equation}
for all $\psi\in W^{1,\infty}(0,T;\delta^\gamma L^\infty(\Omega))$  such that $
    \psi(T,\cdot)\equiv 0.$
	The two formulations above are related by
	\begin{equation}
		\label{eq:relation psi and phi}
		\Green[\psi(t,\cdot) ] (x) = \Heat[0,\phi,0] (T-t,x).
	\end{equation}
	
	Because the test function space is smaller, this formulation is weaker (i.\,e., less strict) than the previous one.
	\begin{theorem}
		\label{thm:weak form equivalence}
		Let $u \in L^1 (0,T; L^1 (\Omega, \delta^\gamma))$
		satisfy \eqref{eq:weak phi}.
		Then $u$ satisfies \eqref{eq:weak psi}. %
	\end{theorem}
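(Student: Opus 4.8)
The plan is to specialise the already-available formulation \eqref{eq:weak phi} to a carefully chosen test function $\phi$ so that, thanks to the correspondence \eqref{eq:relation psi and phi}, it collapses term by term into \eqref{eq:weak psi}. Given $\psi\in W^{1,\infty}(0,T;\delta^\gamma L^\infty(\Omega))$ with $\psi(T,\cdot)\equiv 0$, I would set
\[
\phi(\sigma,\cdot)\defeq\psi(T-\sigma,\cdot)-\Green\!\left[\partial_t\psi(T-\sigma,\cdot)\right],
\qquad \sigma\in(0,T),
\]
so that $\phi(T-t,\cdot)=\psi(t,\cdot)-\Green[\partial_t\psi(t,\cdot)]$. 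Since $\psi(T-\sigma,\cdot)$ and $\partial_t\psi(T-\sigma,\cdot)$ lie in $\delta^\gamma L^\infty(\Omega)$ uniformly in $\sigma$, hypothesis \eqref{eq:G delta Linf to delta C} gives $\Green[\partial_t\psi(T-\sigma,\cdot)]\in\delta^\gamma C(\overline\Omega)$ with a uniform bound, hence $\phi\in\delta^\gamma L^\infty((0,T)\times\Omega)$ and \eqref{eq:weak phi} may legitimately be tested against it. The same observations show that $-\Green[\partial_t\psi(t,\cdot)]+\psi(t,\cdot)\in\delta^\gamma L^\infty(\Omega)$, so that the left-hand side of \eqref{eq:weak psi} is a well-defined pairing with $u\in L^1(0,T;L^1(\Omega,\delta^\gamma))$.

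The crux is to verify that, for this $\phi$, one has $\Heat[0,\phi,0](T-t,\cdot)=\Green[\psi(t,\cdot)]$ for every $t\in[0,T]$, which is exactly \eqref{eq:relation psi and phi}. Write $w(\sigma,\cdot)\defeq\Green[\psi(T-\sigma,\cdot)]$. Since $\psi(T-\sigma,\cdot)\in\delta^\gamma L^\infty(\Omega)\subset L^1(\Omega,\delta^\gamma)$ we have $w(\sigma,\cdot)\in\Green(L^1(\Omega,\delta^\gamma))=D(A)$ with $Aw(\sigma,\cdot)=-\psi(T-\sigma,\cdot)$, and the $W^{1,\infty}$-regularity in time yields $\partial_\sigma w(\sigma,\cdot)=-\Green[\partial_t\psi(T-\sigma,\cdot)]$, so that $\partial_\sigma w(\sigma,\cdot)-Aw(\sigma,\cdot)=\phi(\sigma,\cdot)$. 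Using that $\Semi$ is the $C_0$-semigroup generated by $A$ (\Cref{prop:semigp-L1-def}) we compute, for $0\le\sigma\le s$,
\[
\partial_\sigma\bigl[\Semi(s-\sigma)w(\sigma,\cdot)\bigr]
=\Semi(s-\sigma)\bigl[\partial_\sigma w(\sigma,\cdot)-Aw(\sigma,\cdot)\bigr]
=\Semi(s-\sigma)[\phi(\sigma,\cdot)].
\]
Integrating from $0$ to $s$, using Duhamel's formula \eqref{eq:Duhamel}, $w(0,\cdot)=\Green[\psi(T,\cdot)]=0$ and $\Semi(0)=I$,
\[
\Heat[0,\phi,0](s,\cdot)=\int_0^s\Semi(s-\sigma)[\phi(\sigma,\cdot)]\diff\sigma
=\Semi(0)w(s,\cdot)-\Semi(s)w(0,\cdot)=\Green[\psi(T-s,\cdot)],
\]
which with $s=T-t$ is the desired identity. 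Applying $\D_\gamma$ (the limit $x\to\zeta$ of division by $\delta(x)^\gamma$, meaningful on $\delta^\gamma C(\overline\Omega)$ thanks to \eqref{eq:G delta Linf to delta C}) to this identity also gives $\D_\gamma\Heat[0,\phi,0](T-t,\zeta)=\D_\gamma\Green[\psi(t,\cdot)](\zeta)$.

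It then remains to substitute into \eqref{eq:weak phi}: on the left-hand side, $\phi(T-t,\cdot)=\psi(t,\cdot)-\Green[\partial_t\psi(t,\cdot)]$ reproduces the left-hand side of \eqref{eq:weak psi}, while on the right-hand side the identities $\Heat[0,\phi,0](T,\cdot)=\Green[\psi(0,\cdot)]$, $\Heat[0,\phi,0](T-t,\cdot)=\Green[\psi(t,\cdot)]$ and $\D_\gamma\Heat[0,\phi,0](T-t,\cdot)=\D_\gamma\Green[\psi(t,\cdot)]$ turn the $u_0$-, $f$- and $h$-terms of \eqref{eq:weak phi} into the corresponding terms of \eqref{eq:weak psi}, completing the proof. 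The step I expect to be the main obstacle is the rigorous justification of the identity $\partial_\sigma w(\sigma,\cdot)=-\Green[\partial_t\psi(T-\sigma,\cdot)]$ and of the product-rule/integration computation above when $\psi$ is only $W^{1,\infty}$ in time (so that $\phi$ is merely $L^\infty$ in time and $w$ is only Lipschitz, not $C^1$): I would handle it by first establishing the identity for $\psi$ smooth in time, where $w$ is a genuine classical solution of $w'=Aw+\phi$, $w(0)=0$, and then passing to the limit using the continuity of $\phi\mapsto\Heat[0,\phi,0]$ on the relevant weighted spaces from \Cref{thm:continuity of H} together with the boundedness of $\Green$ from \eqref{eq:G delta Linf to delta C}.
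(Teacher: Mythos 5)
Your proof is correct and targets exactly the same key lemma as the paper, namely the identity \eqref{eq:relation psi and phi} for the test function
\[
\phi(t,\cdot)=\psi(T-t,\cdot)-\Green\!\left[\partial_t\psi(T-t,\cdot)\right],
\]
but you establish that identity by a genuinely different route. The paper proves \eqref{eq:relation psi and phi} componentwise: it expands $\Heat[0,\phi,0](t,\cdot)$ and $\Green[\psi(T-t,\cdot)]$ in the eigenbasis $(\varphi_k)$ and performs an integration by parts in time on each scalar coefficient $\sigma\mapsto\langle\psi(\sigma,\cdot),\varphi_k\rangle$; since these are scalar Lipschitz functions, the integration by parts under $W^{1,\infty}$-regularity is elementary. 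You instead argue abstractly: setting $w(\sigma,\cdot)=\Green[\psi(T-\sigma,\cdot)]$, you observe $w$ solves $\partial_\sigma w-Aw=\phi$, $w(0)=0$, and invoke the variation-of-constants representation for the $C_0$-semigroup generated by $A$. This is conceptually cleaner and requires no spectral decomposition, but it introduces a subtlety the paper's componentwise argument avoids: to differentiate $\sigma\mapsto\Semi(s-\sigma)w(\sigma)$ you need $w$ to be \emph{strongly} differentiable a.e., and since $\delta^\gamma L^\infty(\Omega)\cong L^\infty(\Omega)$ does not have the Radon--Nikodym property, Lipschitz continuity in time does not automatically guarantee this. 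You correctly flag this and propose a density argument (prove for $\psi$ smooth in time, then pass to the limit via the continuity from \Cref{thm:continuity of H}); one should note that smooth-in-time functions are not dense in $W^{1,\infty}$ in norm, so the limit has to be taken in a weaker topology (e.g.\ mollification, which gives $L^\infty$-boundedness and a.e.\ pointwise convergence, sufficient by dominated convergence since $u\in L^1(0,T;L^1(\Omega,\delta^\gamma))$). With that caveat the argument closes. In short: the paper trades abstraction for a more pedestrian eigenbasis calculation that is automatically rigorous; your approach trades that computation for semigroup machinery but must work a bit harder on the time-regularity of the Banach-valued $w$.
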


	\begin{proof}
		The crux of the proof is to show that if $\psi \in W^{1,\infty} ( 0,T ; \delta^\gamma L^\infty ( \Omega))$ is such that $ \psi(T,\cdot)\equiv 0$, then 	
		\begin{equation*}
		\phi(t,x) \defeq - \Green \left[ \frac{\partial \psi}{\partial t} (T-t, \cdot) \right] (x) + \psi ( T -  t,x)
		\end{equation*}
is a valid test function in \eqref{eq:weak phi} and satisfies \eqref{eq:relation psi and phi}. Indeed, since  $\frac{\partial \psi}{\partial t} (t, \cdot),\psi(t, \cdot) \in \delta^\gamma L^\infty (\Omega)$  uniformly in $t$,
so is $\phi(t,\cdot)$.
To show \eqref{eq:relation psi and phi}, we compute
		\begin{align*}
		\left \langle 	\Heat \left[  0, \phi ,0   \right] (t,\cdot)  , \varphi_k \right \rangle
        &=
		\int_0^t e^{-\lambda_k (t - \sigma)} \langle \phi (\sigma, \cdot) , \varphi_k \rangle \diff \sigma \\
		&=\int_{T-t}^{T} e^{-\lambda_k ( t +\sigma - T)} \langle \phi  (T-\sigma, \cdot) , \varphi_k \rangle \diff \sigma \\
		&=  \int_{T-t}^T  e^{-\lambda_k ( t +\sigma - T)} \left \langle - \Green \left[ \frac{\partial \psi}{\partial t} \right] (\sigma, \cdot) + \psi ( \sigma, \cdot) , \varphi_k \right \rangle \diff \sigma \\
		&=  \int_{T-t}^T  e^{-\lambda_k ( t +\sigma - T)} \left[   - \lambda_k^{-1}  \frac{\partial }{\partial t} \langle \psi (\sigma, \cdot), \varphi_k \rangle + \langle \psi (\sigma , \cdot) , \varphi_k  \rangle \right]  \diff \sigma \\
		&=  \lambda_k^{-1} (\langle \psi (T-t, \cdot) , \varphi_k  \rangle - e^{-\lambda_k t} \langle \psi (T, \cdot) , \varphi_k  \rangle) \\
		&=  \lambda_k^{-1} \langle \psi (T-t, \cdot) , \varphi_k  \rangle .
		\end{align*}
		This completes the proof.
	\end{proof}
	\begin{remark}
		For the other implication, we would try to construct
		\begin{equation*}
			\psi (t,x) = - A[ \Heat [0,\phi,0] (T-t, \cdot)]] (x).
		\end{equation*}
		However, showing the existence of weak time-derivative is more delicate.
	\end{remark}

	\subsection{Uniqueness and estimates of the weak-dual solution}

	Formulation \eqref{eq:weak phi} is good for estimates.
\begin{theorem}
\label{thm:uniqueness}
Let $u\in L^1(0,T;L^1(\Omega,\delta^\gamma))$  satisfy \eqref{eq:weak phi} for $u_0 \in L^1(\Omega,\delta^\gamma)$, $f\in L^1(0,T;L^1(\Omega,\delta^\gamma))$ and $h\in L^1((0,T)\times\partial\Omega)$. Then, it holds that
	\begin{equation}
	\label{eq:L1 estimate}
	\begin{split}
	\int_0^T \int_\Omega
	| u | \delta^\gamma
	\diff x \diff t
	&\leq C \left( \int_\Omega |u_0|\delta^\gamma +
	\int_{0}^{T}
	\int_{\Omega}
	|f| \delta ^\gamma
	\diff x
	\diff t  + \int_0^T \int_{\partial \Omega} |h  | \diff t \diff \zeta  \right).
	\end{split}\end{equation}
In particular, there exists at most one solution $u$ of \eqref{eq:weak phi}. Moreover, if $u_0,f,h\geq 0$, then $u\geq0$.
\end{theorem}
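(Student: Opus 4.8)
The plan is to read the weak-dual formulation \eqref{eq:weak phi} as a duality pairing against $\delta^\gamma L^\infty((0,T)\times\Omega)$. By \Cref{thm:continuity of H}, the map $\phi\mapsto\Heat[0,\phi,0]$ sends this space continuously into $\delta^\gamma L^\infty$, and its $\gamma$-normal trace into $L^\infty$, with the relevant norms controlled by $\int_0^Te^{-\lambda_1\sigma}\diff\sigma\le\lambda_1^{-1}$, hence uniformly in $T$. So a single, well-chosen test function will extract $\int_0^T\int_\Omega|u|\,\delta^\gamma$ on the left-hand side of \eqref{eq:weak phi} while keeping the right-hand side bounded by the data.

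\textbf{Step 1 (the $L^1$ estimate).} First I would fix $u\in L^1(0,T;L^1(\Omega,\delta^\gamma))$ satisfying \eqref{eq:weak phi} and insert the measurable test function
\[
\phi(s,x)\defeq\sign\bigl(u(T-s,x)\bigr)\,\delta(x)^\gamma,
\]
which obeys $\norm[L^\infty((0,T)\times\Omega)]{\phi/\delta^\gamma}\le1$ and so is admissible, the test class in \eqref{eq:weak phi} being all of $\delta^\gamma L^\infty((0,T)\times\Omega)$. Since $\phi(T-t,x)=\sign(u(t,x))\,\delta(x)^\gamma$, the left side of \eqref{eq:weak phi} equals $\int_\Omega\int_0^T|u(t,x)|\,\delta(x)^\gamma\diff t\diff x$. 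On the right side, \eqref{eq:H estimates proof 1} gives $|\Heat[0,\phi,0](T,x)|\le C\delta(x)^\gamma$ and $|\Heat[0,\phi,0](T-t,x)|\le C\delta(x)^\gamma$, while \eqref{eq:H estimates proof 2} gives $|\D_\gamma\Heat[0,\phi,0](T-t,\zeta)|\le C$, with $C$ independent of $T$. Bounding each of the three right-hand terms by these inequalities, with the absolute value passed inside, then yields exactly \eqref{eq:L1 estimate}. All right-hand integrals are finite precisely because $u_0\in L^1(\Omega,\delta^\gamma)$, $f\in L^1(0,T;L^1(\Omega,\delta^\gamma))$ and $h\in L^1((0,T)\times\partial\Omega)$, and these same facts, combined with the $\delta^\gamma L^\infty$ bound on $\Heat[0,\phi,0]$, justify the use of Fubini on the left.

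\textbf{Step 2 (uniqueness and sign).} Formulation \eqref{eq:weak phi} is linear in $(u,u_0,f,h)$, so the difference of two solutions with the same data satisfies \eqref{eq:weak phi} with zero data; then \eqref{eq:L1 estimate} forces it to vanish in $L^1(0,T;L^1(\Omega,\delta^\gamma))$, giving uniqueness. For the non-negativity statement, assume $u_0,f,h\ge0$ and test \eqref{eq:weak phi} against $\phi(s,x)\defeq\chi_{\{u(T-s,\cdot)<0\}}(x)\,\delta(x)^\gamma\ge0$. The left side becomes $-\int_0^T\int_\Omega u_-\,\delta^\gamma$, where $u_-=\max(-u,0)$, whereas on the right $\Heat[0,\phi,0]\ge0$ and $\D_\gamma\Heat[0,\phi,0]\ge0$ by the non-negativity clauses of \Cref{thm:continuity of H}, so the whole right side is $\ge0$. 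Hence $\int_0^T\int_\Omega u_-\,\delta^\gamma\le0$, and since $\delta^\gamma>0$ in $\Omega$ this forces $u\ge0$ a.e.

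I do not expect a serious obstacle, since the analytic content is already packaged in \Cref{thm:continuity of H}. The only points needing care are (i) checking that $\sign(u)\,\delta^\gamma$, reversed in time, is a genuine element of the test class in \eqref{eq:weak phi} — which it is, no regularity of $\phi$ being demanded there — and (ii) the integrability and Fubini bookkeeping on both sides, which follows from the weighted-$L^1$ hypotheses on $u$ and on the data together with the $\delta^\gamma L^\infty$ (respectively $L^\infty$) bounds on $\Heat[0,\phi,0]$ (respectively $\D_\gamma\Heat[0,\phi,0]$) from \Cref{thm:continuity of H}.
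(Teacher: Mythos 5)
Your proof is correct and follows the paper's approach exactly: test \eqref{eq:weak phi} with a time-reversed sign-of-$u$ function weighted by $\delta^\gamma$ (the paper uses $\varphi_1\asymp\delta^\gamma$), then invoke the bounds of \Cref{thm:continuity of H} to control the three right-hand terms uniformly in $T$. You are in fact slightly more careful than the paper's terse proof in two places --- explicitly building in the $s\mapsto T-s$ reversal so that the left side really becomes $\int_0^T\!\int_\Omega|u|\,\delta^\gamma$, and retaining the $\delta^\gamma$ weight on the positivity test function so that it genuinely lies in $\delta^\gamma L^\infty((0,T)\times\Omega)$ --- but the underlying argument is identical.
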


\begin{proof}
	We take $\phi(t,x) = \sign(u(t,x)) \varphi_1(x)$,
	where we recall that $\varphi_1 \asymp \delta^\gamma$.
    By \Cref{thm:continuity of H}, \eqref{eq:L1 estimate} holds for any solution $u$ of \eqref{eq:weak phi}. To prove that non-negativity is preserved, take $\phi(t,x)=\sign_-(u(t,x))$.
\end{proof}

	\section{Proof of \texorpdfstring{\Cref{thm:main}}{Theorem 1.1}}
	\label{sec:existence}

	The aim of this section is to show that the candidate solution $\Heat[u_0,f,h]$ given by \eqref{eq:general form of the solution} is in fact the unique solution of the weak-dual formulation \eqref{eq:weak phi}. Since we can write
	\begin{equation*}
		\Heat[u_0,f,h] = \Heat[u_0,f,0] + 	\Heat[0,0,h],
	\end{equation*}
	we will check the the  parts separately.

\subsection{Theory without singular boundary data: \texorpdfstring{$h = 0$}{h=0}}

	We propose to call that part the standard theory. We have the following result
	
		\begin{theorem}
			\label{thm:main h = 0}
		Let $f \in L^1 (0,T; L^1 (\Omega, \delta^\gamma))$ and $u_0 \in L^1 (\Omega, \delta^\gamma)$. Then, $u = \Heat[u_0,f,0] \in L^1 (0,T;L^1 (\Omega, \delta^\gamma) )$
        is the unique solution of \eqref{eq:weak phi}. %
	Furthermore, \eqref{eq:L1 estimate} holds.
	\end{theorem}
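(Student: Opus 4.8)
The plan is to check directly that the candidate $u=\Heat[u_0,f,0]$ from \eqref{eq:general form of the solution} (with $h=0$) satisfies the weak-dual identity \eqref{eq:weak phi}, and then to quote \Cref{thm:uniqueness} for uniqueness and for the bound \eqref{eq:L1 estimate}. That $u\in L^1(0,T;L^1(\Omega,\delta^\gamma))$ is already part of \Cref{lem:uniform integrability}, so the only real content is the algebraic identity. Throughout I write $\overline\varphi(t,\cdot)\defeq\Heat[0,\phi,0](t,\cdot)=\int_0^t\Semi(t-\sigma)[\phi(\sigma,\cdot)]\diff\sigma$ for the test-function building block; by \Cref{thm:continuity of H} it lies in $\delta^\gamma L^\infty(\Omega)$, uniformly in time, with the bound \eqref{eq:H estimates proof 1}.

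\textbf{Step 1 (pairing identities for the semigroup).} First I would extend the self-adjointness $\langle\Semi(t)g,\eta\rangle=\langle g,\Semi(t)\eta\rangle$ from $L^2(\Omega)$ to the weighted duality where $g\in L^1(\Omega,\delta^\gamma)$ and $\eta\in\delta^\gamma L^\infty(\Omega)$: approximate $g$ in $L^1(\Omega,\delta^\gamma)$ by $L^2$ functions and use that $\Semi(t)$ is continuous on $L^1(\Omega,\delta^\gamma)$ (\Cref{prop:semigp-L1-def}) while $\Semi(t):\delta^\gamma L^\infty(\Omega)\to\delta^\gamma C(\overline\Omega)$ (\Cref{thm:S boundary reg}), so that both sides pass to the limit when paired against the weight $\delta^\gamma$.

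\textbf{Step 2 (the identity).} Using the representation $u(t,\cdot)=\Semi(t)[u_0]+\int_0^t\Semi(t-\sigma)[f(\sigma,\cdot)]\diff\sigma$, split the left-hand side $\int_0^T\langle u(t,\cdot),\phi(T-t,\cdot)\rangle\diff t$ of \eqref{eq:weak phi} accordingly. For the $u_0$-part, Step 1 and the change of variables $\sigma=T-t$ give
\[
\int_0^T\langle\Semi(t)[u_0],\phi(T-t,\cdot)\rangle\diff t
=\Big\langle u_0,\int_0^T\Semi(T-\sigma)[\phi(\sigma,\cdot)]\diff\sigma\Big\rangle
=\langle u_0,\overline\varphi(T,\cdot)\rangle.
\]
For the $f$-part, exchange the order of integration over $\{0\le\sigma\le t\le T\}$ — Fubini is legitimate since the integrand is dominated using \eqref{eq:abs value comparison}, \eqref{eq:S L1 cont}, $f\in L^1(0,T;L^1(\Omega,\delta^\gamma))$ and $\phi\in\delta^\gamma L^\infty$ — then apply Step 1 and the substitution $\tau=t-\sigma$:
\[
\int_0^T\!\!\int_0^t\langle\Semi(t-\sigma)[f(\sigma,\cdot)],\phi(T-t,\cdot)\rangle\diff\sigma\diff t
=\int_0^T\Big\langle f(\sigma,\cdot),\int_0^{T-\sigma}\Semi(\tau)[\phi(T-\sigma-\tau,\cdot)]\diff\tau\Big\rangle\diff\sigma
=\int_0^T\langle f(\sigma,\cdot),\overline\varphi(T-\sigma,\cdot)\rangle\diff\sigma.
\]
Adding the two contributions reproduces exactly the right-hand side of \eqref{eq:weak phi} with $h=0$, so $u$ solves the weak-dual formulation. (An alternative route: verify the identity first when $u_0$ and $f(t,\cdot)$ are finite linear combinations of eigenfunctions, where every series is absolutely convergent and the computation is elementary, and then pass to the limit using the continuity estimates of \Cref{thm:continuity of H} on the test-function side and \Cref{lem:uniform integrability} on the solution side.)

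\textbf{Step 3 (conclusion).} Since $u$ satisfies \eqref{eq:weak phi}, \Cref{thm:uniqueness} gives that it is the unique such solution and that \eqref{eq:L1 estimate} holds, its boundary term being zero; the estimate also follows directly from \Cref{lem:uniform integrability}. I expect the main obstacle to be the rigorous handling of Step 1 and the Fubini exchange in Step 2 against test functions that are only in $\delta^\gamma L^\infty(\Omega)$ rather than in $L^2(\Omega)$ — this is precisely where the weighted $L^1$–$L^\infty$ duality and the contractivity and regularising bounds for $\Semi(t)$ established earlier are essential — together with careful bookkeeping of the two time substitutions so that the Duhamel integrals reassemble into $\overline\varphi$ evaluated at the correct times.
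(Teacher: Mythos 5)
Your proposal is correct and reaches the same conclusion, but the route is genuinely different from the paper's. The paper proceeds in two approximation stages: it first assumes $u_0\in L^2(\Omega)$ and $f\in L^2((0,T)\times\Omega)$, establishes the identity by expanding $u_0$, $f$, and $\phi$ in the eigenbasis and matching coefficients mode by mode (with a Fubini in time on each scalar coefficient), and then passes to general $L^1(\Omega,\delta^\gamma)$ data by density, invoking the continuity of $\Heat$ from \Cref{thm:continuity of H} and \Cref{lem:uniform integrability}. You instead work directly at the level of $L^1(\Omega,\delta^\gamma)$–$\delta^\gamma L^\infty(\Omega)$ duality: you extend the self-adjointness of $\Semi(t)$ to this weighted pairing by one density argument (your Step~1), and then carry out a single Fubini/change-of-variables computation on the Duhamel representation to reassemble the right-hand side of \eqref{eq:weak phi}. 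Your Step~1 is legitimate: both $\Semi(t):L^1(\Omega,\delta^\gamma)\to L^1(\Omega,\delta^\gamma)$ and $\Semi(t):\delta^\gamma L^\infty(\Omega)\to\delta^\gamma C(\overline\Omega)$ are established earlier, and since $\delta^\gamma L^\infty(\Omega)\subset L^2(\Omega)$, the $L^2$ self-adjointness applies to the approximating sequence. The Fubini in Step~2 is also sound because $\bigl|\Semi(t-\sigma)[f(\sigma,\cdot)]\bigr|\varphi_1\leq\Semi(t-\sigma)[|f(\sigma,\cdot)|]\varphi_1$ integrates to $e^{-\lambda_1(t-\sigma)}\int_\Omega|f(\sigma,\cdot)|\varphi_1$, which is jointly integrable over $\{0\le\sigma\le t\le T\}$. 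The trade-off: the paper's route defers all measure-theoretic care to the final density limit at the cost of two approximation layers, while yours front-loads the weighted-duality bookkeeping but avoids the intermediate $L^2$ stage entirely; your parenthetical ``alternative route'' via finite eigenfunction sums is essentially the paper's proof.
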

\begin{proof}
		\noindent \textbf{Step 1. Assume $u_0 \in L^2 (\Omega)$ and $f \in L^2 ((0,T) \times \Omega)$.}
	
	In this setting can write
	\begin{align*}
		u_0(x) = \sum_{{m}=1}^\infty \langle u_0, \varphi_m \rangle \varphi_m(x), \qquad
		f(t,x)  = \sum_{{m}=1}^\infty \langle f(t, \cdot), \varphi_m \rangle \varphi_m(x), \qquad
		\phi(t,x) = \sum_{{m}=1}^\infty \langle \phi (t, \cdot), \varphi_m \rangle \varphi_m(x),
	\end{align*}
	where the coefficients $\angles{u_0, \varphi_m}$ are $L^2$-summable and
	\begin{equation*}
		\sum_{m=1}^\infty \int_0^T \angles {f(t, \cdot) , \varphi_m} ^2 \diff t < \infty , \qquad  \sum_{m=1}^\infty \int_0^T \angles {\phi (t, \cdot) , \varphi_m} ^2 \diff t  < \infty.
	\end{equation*}
	Since we know there exists a unique solution due to \Cref{thm:uniqueness}, it is enough to show that our candidate satisfies the equation. Applying \eqref{thm:continuity of H} we know that $u \in L^2 ((0,T)\times \Omega) = L^2 ((0,T); L^2 (\Omega))$ and hence it is sufficient to work on the coefficients of the eigendecomposition.
	Therefore, we can write
	\begin{align*}
    \langle u(t,\cdot),  \varphi_{m} \rangle
=\angles{\Heat[u_0,f,0](t,\cdot),\varphi_m}
&=  e^{-\lambda_m t} \langle u_0, \varphi_m \rangle +  \left( \int_0^t e^{-\lambda_m (t-\sigma)} \langle f(\sigma, \cdot), \varphi_m \rangle \diff \sigma \right) \\	
		\langle \Heat [0, \phi ,0] (t,\cdot)  , \varphi_m \rangle
		&
		=  \left( \int_0^t e^{-\lambda_m (t-\sigma)} \langle \phi(\sigma, \cdot), \varphi_m \rangle \diff \sigma \right).
	\end{align*}
	For functions in $L^2 (\Omega)$ we have that
	\begin{equation*}
		\int_\Omega \left(  \sum_{k=1}^\infty a_k \varphi_k (x)  \right) \left( \sum_{m=1}^\infty b_m \varphi_m (x) \right) \diff x = \sum_{k,m = 1} ^\infty a_k b_m \int_\Omega \varphi_k (x) \varphi_m (x) \diff x = \sum_{m=1} ^\infty a_m b_m.
	\end{equation*}
	Therefore, in order to produce the left hand side of \eqref{eq:weak phi}, it suffices to %
integrate both sides in $t$
against the component of the test function $\phi$
in the same index $m$.
	Applying Fubini's theorem, we have that
		\begin{align*}
			\int_0^T  \langle u (t, \cdot), \varphi_{m} \rangle &  \langle  \phi(T-t,\cdot ),  \varphi_m \rangle  \diff t  \
			= 	\langle u_0 , \varphi_{m} \rangle  \left( \int_0^T e^{-\lambda_m (T-\sigma)} \langle \phi(\sigma, \cdot), \varphi_m \rangle \diff \sigma \right)\\
			& + \int_0^T \langle f (t, \cdot) , \varphi_{m} \rangle \left( \int_0^{T-t} e^{-\lambda_m (T-t-\sigma)} \langle \phi(\sigma, \cdot), \varphi_m \rangle \diff \sigma \right) \diff t .
	\end{align*}
	Therefore, summing up in $m$ yields
	\begin{equation*}
	\int_\Omega \int_0^T  u (t, x) \phi(T-t,x) \diff t  \diff x
	= 	\int_\Omega u_0 (x) \Heat[0,\phi,0] (T, x) \diff x
	+ \int_\Omega \int_0^T f(t, x)  \Heat[0,\phi,0] (T-t, x) \diff t \diff x.
	\end{equation*}

	\noindent \textbf{Step 2. General setting.}
		 Let $u_{0,k} \in L^2(\Omega)$ and $f_k \in L^2 ((0,T)\times \Omega)$ converge to $u_0$ and $f$ in the corresponding weighted $L^1$ spaces. Due to the continuity of $\Heat$, we have that $\Heat[u_{0,k},f_k,0] \to \Heat[u_0, f, 0]$ in $L^1 (\Omega, \delta^\gamma)$. This allows us to pass to the limit in \eqref{eq:weak phi}. The uniqueness is already known from \eqref{thm:uniqueness}.
	\end{proof}

\begin{remark}
	As mentioned in \Cref{rem:energy setting}, $ \Semi(t)$ is also defined in the energy spaces. Naturally, this is extended to $ \Heat[u_0,f,0](t,\,\cdot\,)$ as long as $u_0 \in H_\Ls^{-1} (\Omega)$ and $f \in L^1 (0,T; H_\Ls^{-1} (\Omega))$. The notion of weak solution needs to be extended by introducing duality products.
\end{remark}

\subsection{Theory for singular boundary data: \texorpdfstring{$h \ne 0$}{h=0}}

 We are finally ready to give a linear theory for singular boundary data. The argument is as follows: we will construct a solution by an interior approximation procedure, as in the elliptic setting. Due to \Cref{thm:uniqueness} this is the unique solution. Then, by applying Fubini we show that $u = \Heat[0,0,h]$. 	

	\begin{theorem}
		\label{thm:main h ne 0}
		Let $h \in L^1 (0,T; L^1 (\partial \Omega))$ and $u_0, f = 0$. Then $u = \Heat[0,0,h]$ given by \eqref{eq:general form of the solution} is the unique function in $L^1 (0,T; L^1 (\Omega, \delta^\gamma))$ such that \eqref{eq:weak phi} holds.
	\end{theorem}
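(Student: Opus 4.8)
Uniqueness is immediate from \Cref{thm:uniqueness}, so the work is to produce \emph{a} solution of \eqref{eq:weak phi} and to identify it with the candidate \eqref{eq:general form of the solution}. I would follow the plan announced just before the statement: (i) build a solution by an interior approximation, and (ii) use Fubini to identify it with $\Heat[0,0,h]$.

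\textbf{(i) Construction.} Take the concentrating data $f_j$ of \eqref{eq:concentration to boundary f_j} and set $u_j\defeq\Heat[0,f_j,0]$. By the coarea (tubular neighbourhood) formula, $\norm[L^1(\Omega,\delta^\gamma)]{f_j(t,\cdot)}=\frac{|\partial\Omega|}{|A_j|}\int_{A_j}|h(t,P_{\partial\Omega}(x))|\diff x\le C\norm[L^1(\partial\Omega)]{h(t,\cdot)}$ with $C$ independent of $j$, so $\{f_j\}$ is bounded in $L^1(0,T;L^1(\Omega,\delta^\gamma))$. By \Cref{thm:main h = 0}, $u_j\in L^1(0,T;L^1(\Omega,\delta^\gamma))$ satisfies \eqref{eq:weak phi} with data $(0,f_j,0)$, and by \Cref{lem:uniform integrability} the family $\{u_j\delta^\gamma\}$ is bounded and space--time equi-integrable in $L^1((0,T)\times\Omega)$; Dunford--Pettis then yields a subsequence with $u_j\delta^\gamma\rightharpoonup u\delta^\gamma$ weakly in $L^1((0,T)\times\Omega)$, $u\in L^1(0,T;L^1(\Omega,\delta^\gamma))$. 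Now pass to the limit in \eqref{eq:weak phi}: the left-hand side converges since $\phi(T-\cdot,\cdot)/\delta^\gamma\in L^\infty$; on the right only the $f_j$-term survives, and rewriting it as $\int_0^T\frac{|\partial\Omega|}{|A_j|}\int_{A_j}h(t,P_{\partial\Omega}(x))\,\dfrac{\Heat[0,\phi,0](T-t,x)}{\delta(x)^\gamma}\diff x\diff t$, the fact that $\Heat[0,\phi,0](T-t,\cdot)/\delta^\gamma\in C(\overline\Omega)$ with boundary trace $\D_\gamma\Heat[0,\phi,0](T-t,\cdot)$ and $C(\overline\Omega)$-norm bounded uniformly in $t$ (\Cref{thm:continuity of H}) lets the concentration mechanism of \cite{abatangelo+gc+vazquez2019}, applied slice-wise in $t$ and followed by dominated convergence in $t$ (the slices being dominated by $2C\norm[L^1(\partial\Omega)]{h(t,\cdot)}\lambda_1^{-1}\norm[L^\infty]{\phi/\delta^\gamma}$), identify the limit as $\int_0^T\int_{\partial\Omega}h(t,\zeta)\,\D_\gamma\Heat[0,\phi,0](T-t,\zeta)\diff\zeta\diff t$. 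Thus $u$ solves \eqref{eq:weak phi} with data $(0,0,h)$; by \Cref{thm:uniqueness} it is the unique such function, the whole sequence converges to it, and \eqref{eq:L1 estimate} for $h\ne0$ drops out of \Cref{lem:uniform integrability}.

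\textbf{(ii) Identification.} The candidate \eqref{eq:general form of the solution} is here $\Heat[0,0,h](t,x)=\int_0^t\int_{\partial\Omega}\D_\gamma\p(t-\sigma,\zeta,x)h(\sigma,\zeta)\diff\zeta\diff\sigma$. It belongs to $L^1(0,T;L^1(\Omega,\delta^\gamma))$: by Tonelli $\int_0^T\!\int_\Omega|\Heat[0,0,h]|\delta^\gamma\le\int_0^T\!\int_{\partial\Omega}|h(\sigma,\zeta)|\bigl(\int_0^{T-\sigma}\int_\Omega\D_\gamma\p(\tau,\zeta,x)\delta(x)^\gamma\diff x\,\diff\tau\bigr)\diff\zeta\diff\sigma$, and $\int_\Omega\D_\gamma\p(\tau,\zeta,x)\delta(x)^\gamma\diff x=\D_\gamma\Semi(\tau)[\delta^\gamma](\zeta)\le Ce^{-\lambda_1\tau}$ since $\Semi(\tau)[\delta^\gamma]\le C\Semi(\tau)[\varphi_1]=Ce^{-\lambda_1\tau}\varphi_1\le C'e^{-\lambda_1\tau}\delta^\gamma$. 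To finish I would pass to the limit directly in $u_j(t,x)=\int_0^t\int_\Omega\p(t-\sigma,x,y)f_j(\sigma,y)\diff y\diff\sigma$: for fixed interior $x$ the measures $f_j(\sigma,\cdot)\delta^\gamma\diff y$ concentrate on $\partial\Omega$ and $\p(t-\sigma,x,\cdot)/\delta^\gamma\in C(\overline\Omega)$ has trace $\D_\gamma\p(t-\sigma,x,\cdot)$, so each time-integrand converges and, after dominating in $\sigma$, $u_j(t,x)\to\Heat[0,0,h](t,x)$; together with the weak-$L^1(\delta^\gamma)$ limit of (i) this gives $u=\Heat[0,0,h]$. (Equivalently, and perhaps most cleanly, one checks directly that $\Heat[0,0,h]$ satisfies \eqref{eq:weak phi}: plugging the formula into the left-hand side and using Tonelli over $\{0<\sigma<t<T\}\times\Omega\times\partial\Omega$ produces $\int_0^T\int_{\partial\Omega}h(\sigma,\zeta)\bigl(\int_0^{T-\sigma}\int_\Omega\D_\gamma\p(\tau,\zeta,x)\phi(T-\sigma-\tau,x)\diff x\,\diff\tau\bigr)\diff\zeta\diff\sigma$, and by \eqref{eq:D gamma H 0 f 0} and the substitution $\tau\mapsto T-\sigma-\sigma'$ the inner bracket is exactly $\D_\gamma\Heat[0,\phi,0](T-\sigma,\zeta)$, i.e. the $h$-term of \eqref{eq:weak phi}; uniqueness then identifies this with $u$.)

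\textbf{Main obstacle.} The one delicate point is the behaviour of $\D_\gamma\p(\tau,\zeta,y)$ as $\tau\to0$, where the bound $\D_\gamma\p(\tau,\zeta,y)\le C(\tau)\delta(y)^\gamma$ of \Cref{thm:S boundary reg} has $C(\tau)\to\infty$. Every interchange above is arranged so that this blow-up never appears unintegrated: one always integrates the \emph{interior} spatial variable first — against $\delta(x)^\gamma$, or against $\phi(\cdot,x)$ — turning the inner quantity into a $\D_\gamma\Semi(\tau)[\,\cdot\,]$ that is bounded by $Ce^{-\lambda_1\tau}$. The remaining ingredient, convergence of the concentrating data against functions in $\delta^\gamma C(\overline\Omega)$, is the tubular-neighbourhood/coarea argument of the elliptic theory \cite{abatangelo+gc+vazquez2019}, here applied in time slices together with dominated convergence in $t$; the only new bookkeeping is that the relevant $\delta^\gamma C(\overline\Omega)$-norms ($\Heat[0,\phi,0](\tau,\cdot)/\delta^\gamma$ and $\D_\gamma\Heat[0,\phi,0](\tau,\cdot)$) are uniform in $\tau\in(0,T)$, which is exactly \Cref{thm:continuity of H}.
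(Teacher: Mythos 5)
Your proposal follows the paper's route almost exactly: concentrating data $f_j$, Dunford--Pettis compactness from \Cref{lem:uniform integrability}, slice-wise tubular-neighbourhood passage to the limit dominated in $t$, and a Fubini identification. The paper separates the argument into three steps (smooth non-negative $h$ and $\phi$; smooth sign-changing; general $L^1$ by approximation), whereas you fold everything into one pass by carrying absolute values; this works because the concentration mechanism only needs $h(t,\cdot)\in L^1(\partial\Omega)$ slice-wise together with $\Heat[0,\phi,0](T-t,\cdot)/\delta^\gamma\in C(\overline\Omega)$ uniformly in $t$, which is exactly what \Cref{thm:continuity of H} provides.

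There is one genuine gap, in your \emph{primary} identification route in part (ii). You propose to show $u_j(t,x)\to\Heat[0,0,h](t,x)$ for each fixed interior $x$, ``after dominating in $\sigma$.'' But the only general bound available is $\D_\gamma\p(\tau,\zeta,x)\leq C(\tau)\delta(x)^\gamma$ from \Cref{thm:S boundary reg}, with $C(\tau)\to\infty$ as $\tau\to 0$; and even the sharper estimate of \Cref{thm:S estimate} gives $C(\tau)\sim\tau^{-n/(2s-\gamma)}$, which is not integrable near $\tau=0$ because $n>2s>2s-\gamma$. So for fixed interior $x$ there is no dominating function in $\sigma$ available from the hypotheses, and this route does not close without model-specific heat-kernel bounds. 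This is precisely the pitfall your own ``main obstacle'' paragraph describes — holding $x$ fixed is the one operation that leaves the blow-up of $C(\tau)$ unintegrated. The fix is the alternative you put in parentheses: plug $\Heat[0,0,h]$ directly into \eqref{eq:weak phi} and justify Fubini with the Tonelli estimate $\int_\Omega\D_\gamma\p(\tau,\zeta,x)\delta(x)^\gamma\diff x\leq Ce^{-\lambda_1\tau}$, then invoke \Cref{thm:uniqueness}. That route is self-contained (it doesn't even need part (i)) and is exactly the identification the paper carries out; you should promote it from parenthetical to primary.
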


	\begin{proof}
\noindent {\bf Step 1: Both $h \in C^\infty([0,T] \times \partial \Omega)$ and $\phi \in \delta^\gamma L^\infty((0,T)\times\Omega)$ are non-negative.}
		Take $f_j$ as in \eqref{eq:concentration to boundary f_j}. Let $u_j \defeq \Heat[0,f_j,0]$.
		By \Cref{lem:uniform integrability} and Dunford--Pettis Theorem the sequence $u_j \delta^\gamma $ is precompact in $L^1( (0,T) \times \Omega )$ and hence, there exists $u$ such that up to a subsequence
		\begin{equation*}
		u_j \delta^\gamma \rightharpoonup u \delta^\gamma \quad \text{ in } L^1( (0,T) \times \Omega ).
		\end{equation*}
		
		Due to \Cref{thm:main h = 0}, we can write
		\begin{equation*}
		\begin{split}
		\int_\Omega \int_0^T u_j (t,x) \phi(T-t,x)   =   \int_0^T \frac{|\partial \Omega|}{|A_j|}\int_{A_j} h(t,P_{\partial \Omega } (x)) \frac {\Heat[0,\phi,0](T-t,x)}{\delta(x)^\gamma} \diff x \diff t , \\
		\end{split}
		\end{equation*}
		As in \cite{abatangelo+gc+vazquez2019}, it is easy to prove by using the tubular neighbourhood theorem that, for every $t \in (0,T)$,
		\begin{equation*}
		F_j (t) \defeq
		\frac{|\partial \Omega|}{|A_j|}\int_{A_j} h(t,P_{\partial \Omega } (x)) \frac {\Heat[0,\phi,0](T-t,x)}{\delta(x)^\gamma} \diff x
    \to
    \int_{\partial \Omega} h ( t , \zeta) \D_\gamma  \Heat[0,\phi, 0]  (T-t,\zeta) \diff \zeta = \vcentcolon F(t).
		\end{equation*}
		Hence, we have that
		\begin{equation*}
			F_j (t) \le C \left\| \frac { \Heat[0,\phi,0](T-t,\cdot)}{ \delta^\gamma} \right\|_{L^\infty (\Omega)} \int_{\partial \Omega} h(t, \zeta)  \diff \zeta .
		\end{equation*}
		Therefore, by the Dominated Convergence Theorem
		\begin{equation*}
			\int_0^T F_j (t) \diff t \to \int_0^T F(t) \diff t.
		\end{equation*}
		Let us now show that $u = \Heat[0,0,h]$ in the weak-dual sense \eqref{eq:weak phi}. We write the kernel expression and apply Fubini's theorem
		for non-negative functions
		
		to deduce that
		\begin{align*}
		\int_\Omega \int_0^T u(t,x)  \phi(T-t,x) \diff t \diff x &=   \int_0^T \int_{\partial \Omega} h(t, \zeta)  \D_\gamma \Heat[0,\phi,0](T-t,\zeta) \diff \zeta \diff t \\
		&=  \int_0^T \int_{\partial \Omega} h(t, \zeta)  \left( \int_0^{T-t} \int_\Omega \D_\gamma \p (T-t-\sigma,\zeta,y) \phi(\sigma, y) \diff \sigma \diff y \right) \diff \zeta \diff t \\
		&= \int_0^T \Heat[0,0,h] (t,x)  \phi(T-t,x) \diff t \diff x.
		\end{align*}

\noindent {\bf Step 2: Both $h \in C^\infty([0,T]\times\partial\Omega)$ and $\phi$ are sign-changing.}
		When $h$ and $\phi$ change sign, we can decompose them into their positive and negative parts, and apply {\bf Step 1}.
\medskip

\noindent {\bf Step 3: General case $h \in L^1((0,T)\times\partial\Omega)$.} We apply {\bf Step 2} to an approximating sequence for $h\in L^1 ((0,T) \times \partial \Omega)$, namely $h_j \in C^\infty([0,T]\times\partial\Omega)$ with $h_j\to h$ in $L^1 ((0,T) \times \partial \Omega)$.
		Since $\Heat$ and the weak formulation allow us to pass to the limit $j\to\infty$, we do so as in \Cref{thm:main h = 0}.
	\end{proof}

		\begin{remark}
		We present a formal computation to show that $\Heat[0,0,h]$ satisfies the weak formulation \eqref{eq:weak psi}. Assume that $h$ is regular.
		Fix a test function $\psi$. We define $u_k (t,x) \defeq \langle u(t,\cdot), \varphi_k \rangle$. Notice that
		\begin{align*}
		u_k (t,x) &=   \int_0^t e^{-\lambda_k (t-\sigma)}\left( \int_{\partial \Omega}  h(\sigma,\zeta ) \D_\gamma  {\varphi_k (\zeta)} \diff \zeta \right) \diff \sigma.
		\end{align*}
		Define also
		\begin{equation*}
		c_k (t) = \langle \psi_k(t, \cdot), \varphi_k \rangle .
		\end{equation*}
		Since $\psi(T,x) = 0$ we have that $c_k(T) = 0$. Clearly
		\begin{align*}
		\left \langle - \Green \left[ \frac{\partial \psi}{\partial t} (t, \cdot) \right] + \psi (t, \cdot) , \varphi_k \right \rangle =  -\lambda_k^{-1} c_k'(t)  + c_k (t)
		\end{align*}
		Hence
		\begin{align*}
		\int_\Omega \int_0^T &u _k(t,x) \left( - \Green \left[ \frac{\partial \psi}{\partial t}(t, \cdot) \right](x) + \psi (x) \right) \diff t \diff x\\
		&=   \left\{ \int_0^T \left(   \int_0^t e^{-\lambda_k (t-\sigma)}\left( \int_{\partial \Omega}  h(\sigma,\zeta ) \D_\gamma  {\varphi_k (\zeta)} \diff \zeta \right) \diff \sigma  \right) \left( -\lambda_k^{-1} c_k'(t)  + c_k (t) \right) \diff t \right \} \int_\Omega \varphi_k(x)^2 \diff x \\
		&=   \int_0^T  \int_0^t e^{-\lambda_k (t-\sigma)} \left( -\lambda_k^{-1} c_k'(t)  + c_k (t) \right) \left( \int_{\partial \Omega}  h(\sigma,\zeta ) \D_\gamma  {\varphi_k (\zeta)} \diff \zeta \right) \diff \sigma   \diff t\\
		&= \int_0^T \left( \int_t^T e^{-\lambda_k (t-\sigma)} \left( -\lambda_k^{-1} c_k'(t)  + c_k (t) \right)   \diff t \right) \left( \int_{\partial \Omega}  h(\sigma,\zeta ) \D_\gamma  {\varphi_k (\zeta)} \diff \zeta \right) \diff \sigma\\
		&= \int_0^T (- \lambda_k) c_k(t)   \left( \int_{\partial \Omega}  h(\sigma,\zeta ) \D_\gamma  {\varphi_k (\zeta)} \diff \zeta \right) \diff \sigma\\
		&= \int_0^T  \int_{\partial \Omega}  h(\sigma,\zeta ) \D_\gamma  \Green[  c_k (t) \varphi_k (t) ] \diff \zeta \diff \sigma.\\
		\end{align*}
		Therefore
		\begin{align*}
		\int_\Omega \int_0^T &\left( \sum_{k=1}^m \langle u(t, \cdot), \varphi_k \rangle \varphi_k (x) \right)  \left( - \Green \left[ \frac{\partial \psi}{\partial t}(t, \cdot) \right](x) + \psi (x) \right) \diff t \diff x\\
		&= \int_0^T  \int_{\partial \Omega}  h(\sigma,\zeta ) \D_\gamma  \Green\left[ \sum_{k=1}^m \langle \psi(t, \cdot), \varphi_k \rangle \varphi_k  \right] (\zeta)   \diff \zeta \diff \sigma.
		\end{align*}
	\end{remark}
	\subsection{General setting}
	\label{sec:proof main result general}
	Due to the linearity, joining \Cref{thm:uniqueness}, \Cref{thm:main h = 0} and \Cref{thm:main h ne 0} we recover the general which is equivalent to \Cref{thm:main}.
	\begin{theorem}
		We have that
	$$
		\Heat : L^1 (\Omega, \delta^\gamma ) \times L^1 (0,T;L^1 (\Omega, \delta^\gamma ) ) \times L^1 ( (0,T) \times \partial \Omega ) \longrightarrow L^1(0,T; L^1 (\Omega, \delta^\gamma ) )
	$$
		is continuous and we have that estimate \eqref{eq:L1 estimate}. For $(u_0, f,h)$ in this domain, $u = \Heat[u_0, f,h]$ the unique function in $L^1 (0,T; L^1 (\Omega, \delta^\gamma))$ such that \eqref{eq:weak phi} holds.
	\end{theorem}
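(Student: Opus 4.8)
The plan is simply to assemble the three pieces already in hand — \Cref{thm:uniqueness}, \Cref{thm:main h = 0} and \Cref{thm:main h ne 0} — using nothing beyond the linearity of the construction, so that no new analysis is required.

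First I would note that the candidate operator defined by \eqref{eq:general form of the solution} is, term by term, additive in the data: since the formula is $\Semi(t)[u_0]$ plus the interior Duhamel term plus the boundary Duhamel term, one has at once
\[
\Heat[u_0,f,h] = \Heat[u_0,f,0] + \Heat[0,0,h].
\]
By \Cref{thm:main h = 0}, for $u_0\in L^1(\Omega,\delta^\gamma)$ and $f\in L^1(0,T;L^1(\Omega,\delta^\gamma))$ the first summand lies in $L^1(0,T;L^1(\Omega,\delta^\gamma))$, solves \eqref{eq:weak phi} with boundary datum $0$, and obeys the estimate \eqref{eq:L1 estimate} with the boundary term absent. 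By \Cref{thm:main h ne 0}, for $h\in L^1((0,T)\times\partial\Omega)$ — which by Tonelli is the same space as $L^1(0,T;L^1(\partial\Omega))$, so that theorem applies verbatim — the second summand lies in $L^1(0,T;L^1(\Omega,\delta^\gamma))$ and solves \eqref{eq:weak phi} with $u_0=f=0$; applying \Cref{thm:uniqueness} to it with $u_0=f=0$ yields $\int_0^T\int_\Omega|\Heat[0,0,h]|\,\delta^\gamma\,\diff x\,\diff t\leq C\int_0^T\int_{\partial\Omega}|h|\,\diff\zeta\,\diff t$.

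Next, because both sides of the weak-dual identity \eqref{eq:weak phi} are linear in $(u_0,f,h)$ and in $u$, adding the two identities shows that $u\defeq\Heat[u_0,f,h]$ satisfies \eqref{eq:weak phi} for the full data; and $u$ belongs to $L^1(0,T;L^1(\Omega,\delta^\gamma))$ as a sum of two such functions. Adding the two $L^1$-bounds gives exactly \eqref{eq:L1 estimate}, i.e.\ the boundedness — hence, by linearity, the continuity — of $\Heat$ on the product space. Uniqueness is immediate, since \Cref{thm:uniqueness} already asserts that \eqref{eq:weak phi} admits at most one solution in $L^1(0,T;L^1(\Omega,\delta^\gamma))$ for prescribed data, so $\Heat[u_0,f,h]$ is \emph{the} solution. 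Finally, if $u_0,f,h\geq0$ then each summand is non-negative — the first by \Cref{thm:continuity of H} (or \Cref{thm:main h = 0}), the second because $\D_\gamma\p\geq0$ by \Cref{thm:S boundary reg} — so $u\geq0$; this also follows directly from the last sentence of \Cref{thm:uniqueness}.

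There is no genuine obstacle here: all of the substantive work was done in \Cref{sec:weak formulation} and \Cref{sec:existence}. The only points that deserve a word of care are purely bookkeeping — the identification of $L^1((0,T)\times\partial\Omega)$ with $L^1(0,T;L^1(\partial\Omega))$ so that \Cref{thm:main h ne 0} is applicable as stated, and the (trivial) observation that \eqref{eq:general form of the solution} is literally the sum of the two constituent operators, which legitimizes the decomposition above.
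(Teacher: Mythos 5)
Your proposal is correct and matches the paper's own argument exactly: the paper's entire proof is the single sentence preceding the theorem, ``Due to the linearity, joining \Cref{thm:uniqueness}, \Cref{thm:main h = 0} and \Cref{thm:main h ne 0} we recover the general [result],'' and you have simply spelled out that one-liner. The bookkeeping points you flag (the identification $L^1((0,T)\times\partial\Omega)=L^1(0,T;L^1(\partial\Omega))$, and the literal additivity of \eqref{eq:general form of the solution}) are both sound and worth the word of care you give them.
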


	\section{Agreement between the elliptic and parabolic theories}
	\label{sec:elliptic and parabolic}
	Here we prove that the solution of the parabolic problem with time-independent data converges to the solution of the elliptic problem as $t \to +\infty$.
	\begin{theorem}
		Let $u_0 \in L^1 (\Omega, \delta^\gamma)$, $f(t, x) = f(x) \in L^1 (\Omega, \delta^\gamma)$ and $h(t, x) = h(x) \in L^1 (\partial \Omega)$ then
		\begin{equation*}
			\Heat[u_0,f,h] (t,\cdot) \to \Green[f] + \Martin[h] \qquad \text{ in } L^1 (\Omega, \delta^\gamma) \text{ as } t \to +\infty.
		\end{equation*}
	\end{theorem}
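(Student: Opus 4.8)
The plan is to recognise $\Green[f]+\Martin[h]$ as a \emph{stationary} solution of the parabolic problem and then to reduce the statement to the exponential decay of $\Semi(t)$ in $L^1(\Omega,\delta^\gamma)$ proved in \Cref{prop:decay of L1 delta}.

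First I would split, using the linearity built into \eqref{eq:general form of the solution},
$\Heat[u_0,f,h](t,\cdot)=\Semi(t)[u_0]+\Heat[0,f,0](t,\cdot)+\Heat[0,0,h](t,\cdot)$, and rewrite the last two pieces. Because $f$ is time-independent one has $\Green[f]=\Heat[\Green[f],f,0](t,\cdot)$ for every $t$ (noted in \Cref{sec:boundary estimates}; it is checked as in Step~1 of the proof of \Cref{thm:main h = 0} by verifying $\langle\Heat[\Green[f],f,0](t,\cdot),\varphi_m\rangle=\lambda_m^{-1}\langle f,\varphi_m\rangle$ for $f\in L^2(\Omega)$ and extending by density, $\Green$ and $\Heat$ being continuous on $L^1(\Omega,\delta^\gamma)$), that is, $\Heat[0,f,0](t,\cdot)=\Green[f]-\Semi(t)[\Green[f]]$. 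Because $h$ is time-independent, the identity $\Martin[h]=\Heat[\Martin[h],0,h](t,\cdot)$, which is \eqref{eq:singular data equality} (computed formally in \Cref{rem:satisfies boundary condition}, and rigorous once uniqueness is in hand, i.e.\ by \Cref{thm:uniqueness}), gives $\Heat[0,0,h](t,\cdot)=\Martin[h]-\Semi(t)[\Martin[h]]$.

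Putting these together, $\Heat[u_0,f,h](t,\cdot)=\Green[f]+\Martin[h]+\Semi(t)\bigl[u_0-\Green[f]-\Martin[h]\bigr]$. The function $w\defeq u_0-\Green[f]-\Martin[h]$ lies in $L^1(\Omega,\delta^\gamma)$: this is the hypothesis for $u_0$, it holds for $\Green[f]$ since $\Green$ is continuous on $L^1(\Omega,\delta^\gamma)$, and it holds for $\Martin[h]$ because $\M(x,\zeta)=\D_\gamma\G(\zeta,x)\asymp|x-\zeta|^{-(n-2s+2\gamma)}\delta(x)^\gamma$ gives $\Martin\colon L^1(\partial\Omega)\to L^1(\Omega,\delta^\gamma)$ continuously (after integrating the locally integrable singularity $|x-\zeta|^{-(n-2s)}$; in particular $u^\star\asymp\delta^{2s-\gamma-1}\in L^1(\Omega,\delta^\gamma)$). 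Then \Cref{prop:decay of L1 delta} yields
\[
\bigl\|\Heat[u_0,f,h](t,\cdot)-\Green[f]-\Martin[h]\bigr\|_{L^1(\Omega,\delta^\gamma)}
=\bigl\|\Semi(t)[w]\bigr\|_{L^1(\Omega,\delta^\gamma)}
\le Ce^{-\lambda_1 t}\|w\|_{L^1(\Omega,\delta^\gamma)}\xrightarrow[t\to\infty]{}0,
\]
which proves the statement, in fact with an exponential rate.

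The last paragraph is routine; the step I expect to require real care — and which I would settle first — is the rigorous proof of $\Martin[h]=\Heat[\Martin[h],0,h]$. The clean way is to verify that the time-independent function $u\equiv\Martin[h]$ satisfies the weak-dual formulation \eqref{eq:weak phi} with data $(\Martin[h],0,h)$, and then invoke uniqueness. Applying the elliptic characterisation \eqref{eq:Martin weak formulation} to the test functions $\psi=\Heat[0,\phi,0](T,\cdot)$ and $\psi=\int_0^T\phi(T-t,\cdot)\,\diff t$ (both in $\delta^\gamma L^\infty(\Omega)$) reduces the required equality to the operator identity
\[
\D_\gamma\Green\Bigl[\int_0^T\phi(T-t,\cdot)\,\diff t\Bigr]
=\D_\gamma\Green\bigl[\Heat[0,\phi,0](T,\cdot)\bigr]
+\int_0^T\D_\gamma\Heat[0,\phi,0](T-t,\cdot)\,\diff t,
\]
which follows — for $\phi$ smooth, so that $\Heat[0,\phi,0]$ is classical, and then by density — from $\Ls\Heat[0,\phi,0]=\phi-\partial_t\Heat[0,\phi,0]$ together with the commutation of the bounded operator $\D_\gamma\Green$ with the time integral. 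All the delicate bookkeeping with the singular boundary datum is concentrated in this identity.
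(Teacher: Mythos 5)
Your proof is correct and follows essentially the same route as the paper: both identify $\Green[f]+\Martin[h]$ as a stationary solution via $\Heat[\Green[f],f,0]=\Green[f]$ and $\Heat[\Martin[h],0,h]=\Martin[h]$, reduce the difference to $\Semi(t)[u_0-\Green[f]-\Martin[h]]$, and invoke \Cref{prop:decay of L1 delta}. You add detail the paper leaves implicit (the check that $\Martin[h]\in L^1(\Omega,\delta^\gamma)$ and a sketch of the rigorous verification of $\Heat[\Martin[h],0,h]=\Martin[h]$ via \eqref{eq:weak phi} and uniqueness, where the paper simply calls this step immediate), but the argument is the same.
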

	\begin{proof}
		Due to the weak-dual formulation, it is immediate to see that $\Heat[\Green[f],f,0] = \Green[f]$ and $\Heat[\Martin[h],0,h] = \Martin[h]$. Hence,
		\begin{equation*}
			\Heat[u_0, f,h] - \Green[f] - \Martin[h] = 	\Heat[u_0 - \Green[f] - \Martin[h] ,0,0]  =   \Semi(t) \left[ u_0 - \Green[f] - \Martin[h]  \right].
		\end{equation*}
		From \Cref{prop:decay of L1 delta} we have that
		\begin{equation*}
		 \|  \delta^\gamma \Semi(t) \left[ u_0 - \Green[f] - \Martin[h]  \right] 	  \|_{L^1 (\Omega)} \to 0,
		\end{equation*}
		and this concludes the proof.
	\end{proof}

	\section{Main examples of operators \texorpdfstring{$\Ls$}{L}}
\label{sec:examples}
\subsection{Restricted fractional Laplacian (RFL)}

	This is the main example as far as the literature is concerned. The operator is given by
	\begin{equation*}
		\RFL u(x) = \text{PV} \int_{\mathbb R^d} \frac{u(x) -u(y)}{|x-y|^{n+2s}} \diff y\,,
	\end{equation*}
	where $u$ is extended by $0$ outside $\Omega$. There is a large literature. Thus, hypotheses  \eqref{eq:G estimate}, \eqref{eq:G delta Linf to delta C}, and \eqref{eq:S submarkovian} were already checked in \cite{bonforte+figalli+vazquez2018}, and  \eqref{eq:Green normal} %
in \cite{abatangelo+gc+vazquez2019}, and it is proved that the corresponding exponent is $\gamma = s \in (0,1)$. Moreover,
from \cite{Chen2010, Bogdan2010}, we have for any $T > 0$,
	\begin{equation}\label{eq:RFL-p-bd}
		\p (t,x,y) \overset T \asymp
		\begin{dcases}
			t^{-\frac{n}{2s}} \left( 1 \wedge \frac{t^{\frac 1 {2s}}}{|x-y|} \right)^{n + 2s} \left( 1 \wedge \frac{\delta(x)}{t^{\frac{1}{2s}}} \right)^{s } \left( 1 \wedge \frac{\delta(y)}{t^{\frac{1}{2s}}} \right)^{s },  			& t < T, \\
			e^{-\lambda_1 t} \delta(x)^{s} , \delta(y)^s,  & t \ge T,
		\end{dcases}
	\end{equation}
	where $\lambda_1$ is the smallest  eigenvalue of $\RFL$.  (Hereafter, $\overset T \asymp$ means that the constant in the
equivalence  may depend on $T$.)
	This proves  \eqref{eq:S boundary regularity}.
	
	Concerning regularity, in \cite{Fdz-Real+RosOton2016} the authors showed that
	\begin{equation*}
	\left\| 	\frac{\Semi (t) u_0 }{\delta ^s} \right\|_{C^{s-\varepsilon} (\overline \Omega)} \le C(t_0) \| u _0 \|_{L^2 (\Omega)}, \qquad \forall t \ge t_0.
	\end{equation*}
	This guarantees \eqref{eq:S boundary regularity}. Their argument is based on Weyl's law, which is already known \cite{BlumGetoor1959,Geisinger2014,Grubb2015}.
	We provide some further information. Since the $s$-normal derivative of the heat kernel is known to exist by \Cref{thm:S boundary reg}, it satisfies the two-sided estimate  %
	\[
    \D_s\p(t,\zeta, y)
	\overset T \asymp
	\begin{dcases}
	t^{ -\frac {n+s}{2s}} \left( 1 \wedge \frac{t^{\frac 1 {2s}}}{|\zeta-y|} \right)^{ n + 2s} \left( 1 \wedge \frac{\delta(y)}{t^{\frac{1}{2s}}} \right)^{s },  			& t < T, \\
	e^{-\lambda_1 t} \delta(y)^{s},
	& t \ge T.
	\end{dcases}
	\]

Furthermore, we have the following	

    \begin{lemma}[Regularisation of RFL heat semigroup]
    For any $t>0$, the RFL semigroup map $\Semi(t)$ is continuous from $L^1(\Omega,\delta^s)  \to \delta^s C^{s-\varepsilon}(\overline\Omega)$.  Furthermore, $L^1 (\Omega, \delta^s)$ is the largest set of admissible integrable data in the sense that, if $u_0 \ge 0$ then we have the Hopf inequality
    \begin{equation*}
    	\frac{ \Semi(t) u_0 (x) } {\delta(x)^s} \asympT \int_\Omega u_0 (y) \delta(y)^s \diff y.
    \end{equation*}
    \end{lemma}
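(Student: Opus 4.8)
The plan is to read off both assertions from the two-sided heat kernel bound \eqref{eq:RFL-p-bd}, the analogous bound for $\D_s\p$, and the $L^2\to\delta^sC^{s-\varepsilon}$ regularisation of \cite{Fdz-Real+RosOton2016} recalled above. \textbf{Step 1 (a $t$-dependent two-sided bound on $\p$).} First I would show that for each fixed $t>0$ there are constants, depending only on $t,n,s,\Omega$, with $\p(t,x,y)\asympT\delta(x)^s\delta(y)^s$ for all $x,y\in\Omega$. The upper bound is immediate from \eqref{eq:RFL-p-bd}: for $t<T$ one has $t^{-\frac n{2s}}\bigl(1\wedge\tfrac{t^{\frac1{2s}}}{|x-y|}\bigr)^{n+2s}\le t^{-\frac n{2s}}$ and $\bigl(1\wedge\tfrac{\delta(x)}{t^{\frac1{2s}}}\bigr)^{s}\le t^{-\frac12}\delta(x)^s$, symmetrically in $y$, while for $t\ge T$ it is the second line of \eqref{eq:RFL-p-bd}. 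For the lower bound, boundedness of $\Omega$ gives $|x-y|\le\diam\Omega$ and $\delta(x)\le\diam\Omega$, so for $t<T$
\[
\Bigl(1\wedge\tfrac{t^{\frac1{2s}}}{|x-y|}\Bigr)^{n+2s}\ge\Bigl(1\wedge\tfrac{t^{\frac1{2s}}}{\diam\Omega}\Bigr)^{n+2s},
\qquad
\Bigl(1\wedge\tfrac{\delta(x)}{t^{\frac1{2s}}}\Bigr)^{s}\ge\min\bigl\{t^{-\frac12},(\diam\Omega)^{-s}\bigr\}\,\delta(x)^s,
\]
and likewise in $y$, while for $t\ge T$ the lower bound is again the second line of \eqref{eq:RFL-p-bd}. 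The same manipulation applied to the stated estimate for $\D_s\p$ gives $\D_s\p(t,\zeta,y)\asympT\delta(y)^s$ for $\zeta\in\partial\Omega$ (not needed for the statement, but worth recording). This step is the only delicate one: the cutoff factors $(1\wedge\,\cdot\,)$ are not comparable to powers of $\delta$ near the diagonal, and one genuinely uses boundedness of $\Omega$ to absorb them into $t$-dependent constants.

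\textbf{Step 2 (Hopf inequality and optimality).} For $0\le u_0$ and $t>0$ fixed, integrating the bound of Step 1 against $u_0$ yields
\[
\frac{\Semi(t)[u_0](x)}{\delta(x)^s}=\int_\Omega\frac{\p(t,x,y)}{\delta(x)^s}\,u_0(y)\diff y\asympT\int_\Omega u_0(y)\,\delta(y)^s\diff y,
\]
which is precisely the claimed Hopf inequality. In particular, if $0\le u_0\notin L^1(\Omega,\delta^s)$ the lower bound forces $\Semi(t)[u_0]\equiv+\infty$ for every $t>0$; this sharpens \Cref{rem:Hopf for t large}, where the conclusion was only for large $t$, and shows that $L^1(\Omega,\delta^s)$ is the largest admissible class of integrable data.

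\textbf{Step 3 (regularisation).} I would factor $\Semi(t)=\Semi(t/2)\circ\Semi(t/2)$. The upper bound of Step 1 (equivalently, intrinsic ultracontractivity \eqref{eq:S estimate}) gives $|\Semi(t/2)[u_0](x)|\le C(t)\,\delta(x)^s\,\|u_0\|_{L^1(\Omega,\delta^s)}$, and since $\delta^s\in L^\infty(\Omega)\subset L^2(\Omega)$ this shows $\Semi(t/2)\colon L^1(\Omega,\delta^s)\to L^2(\Omega)$ is continuous. Composing with $\Semi(t/2)\colon L^2(\Omega)\to\delta^sC^{s-\varepsilon}(\overline\Omega)$, which is the estimate of \cite{Fdz-Real+RosOton2016} applied with $t_0=t/2$, gives the continuity of $\Semi(t)\colon L^1(\Omega,\delta^s)\to\delta^sC^{s-\varepsilon}(\overline\Omega)$ for every $t>0$. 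Everything here beyond Step 1 is a routine assembly of estimates already in hand.
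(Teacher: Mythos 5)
Your proof is correct, and its core (Steps 2 and 3) follows essentially the same path as the paper. One remark on Step 1: it is more work than needed. Since $T$ is a free parameter in \eqref{eq:RFL-p-bd} (the constants in $\overset T\asymp$ may depend on $T$), the paper simply fixes $T=t/2$, which forces the time $t$ under consideration into the regime $t\ge T$ and hence into the second line of \eqref{eq:RFL-p-bd}, where the bound is already of the form $\p(t,x,y)\asymp e^{-\lambda_1 t}\delta(x)^s\delta(y)^s$. This eliminates all the case-splitting and manipulation of the cutoff factors $(1\wedge\cdot)$ that you do for $t<T$. That said, your derivation is sound, and your observation that boundedness of $\Omega$ is used to absorb the cutoffs is a correct piece of awareness; it is just unnecessary when $T$ is chosen cleverly. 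Your Step 3, factoring $\Semi(t)=\Semi(t/2)\circ\Semi(t/2)$ through $L^2(\Omega)$ and invoking the regularisation of Fern\'andez-Real and Ros-Oton for the second factor, is exactly the right argument; the paper actually leaves this half of the lemma implicit, stating only the Hopf inequality part of the proof, so you have filled a small gap. Your Step 2 and the observation that it sharpens \Cref{rem:Hopf for t large} to all $t>0$ match the paper's intent.
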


    \begin{proof}
    	Take $T = t / 2$ in \eqref{eq:RFL-p-bd}. Then, we have that
    	\begin{equation*}
    	\left| 	\frac{\Semi (t) [u_0] (x)}{\delta (x)^s} \right| \le C(t) e^{-\lambda_1 t} \int_\Omega |u_0(y)| \delta(y)^s \diff y,
        \qquad  \forall u_0\in L^1(\Omega,\delta^s).
    	\end{equation*}
    	For non-negative $u_0$ we take advantage of the lower bound of the kernel.
    \end{proof}

	\subsection{Spectral fractional Laplacian (SFL)}
	Let $\lambda_k [-\Delta]$ be the eigenvalues of the usual Laplacian, with eigenfunctions $\varphi_k$. Then for any $s\in(0,1)$ we define
	\begin{equation*}
		\SFL u(x) = \sum_{k=1}^\infty  (\lambda_k[-\Delta])^s  \langle u , \varphi_k \rangle \varphi_k (x),
	\end{equation*}
 whenever the right hand side converges.
	Hypotheses  \eqref{eq:G estimate}, \eqref{eq:G delta Linf to delta C}, and \eqref{eq:S submarkovian}, were already shown to hold in \cite{bonforte+figalli+vazquez2018}, and  \eqref{eq:Green normal} was checked in \cite{abatangelo+gc+vazquez2019}, with
 parameter $\gamma = 1$.  These facts  come from the previous literature.
	
	There are different ways to verifying \eqref{eq:S boundary regularity} in this setting. First, through the eigenvalues of the SFL are the $s$-power of those of the usual Laplacian,  we know that  the energy spaces are 
	\begin{equation*}
	H^k (\Omega) = \left \{  u \in L^2(\Omega) : \sum_{m=1}^\infty  (\lambda_m[-\Delta])^{k}  \langle u, \varphi_m \rangle^2  < +\infty  \right \}.
	\end{equation*}
	Hence $\Semi(t) : L^2 (\Omega) \longrightarrow H^k (\Omega)$ for any  $k>0$. By Sobolev embedding theorem, we also have $\Semi(t): L^2(\Omega) \longrightarrow \delta C(\overline \Omega)$.
	
	Another way is through the estimates.  From  \cite[Theorems 3.1 and 3.9]{Song2004} (see also \cite{Song2017}) we have an additional exponential correction (which is only relevant as $t \to 0$): for any $T > 0$ and $ 0 < t \le T $,

	\begin{multline*}
	\nonumber
	C_1(T)\left( \frac{ \delta(x) \delta(y) }{t} \wedge 1 \right)t^{-\frac{n}{2}} \exp \left( - \frac {C_2(T) |x-y|^2 }{ t} \right)
\\
	\le \p(t,x,y) \\
\le
	\nonumber
	C \left( \frac{ \delta(x) \delta(y) }{t} \wedge 1 \right)t^{-\frac{n}{2}} \exp \left( - \frac { |x-y|^2 }{6t} \right).
	\end{multline*}

 Note that these estimates are independent of the fractional order $2s$. 
	Thus

	\begin{gather*}
	C_1(T) \delta(y) t^{-\frac{n+2}{2}} \exp \left( - \frac {C_2(T) |\zeta -y|^2 }{ t} \right)
	\le \D_1 \p(t,\zeta,y) \le
	C{\delta(y)}t^{-\frac{n+2}{2}} \exp \left( - \frac { |\zeta -y|^2 }{6t} \right).
	\end{gather*}

	Again, if $u_0 \ge 0$ we have that
	\begin{equation*}
		\frac{ \Semi(t) [u_0 ](x) } {\delta(x)} \asympT \int_\Omega u_0 (y) \delta(y) \diff y.
	\end{equation*}
	Hence, the optimal  class of  data is precisely $L^1 (\Omega, \delta)$.

	\subsection{Censored fractional Laplacian (CFL)}
	We define the operator by the singular integral expression
	\begin{equation*}
				(-\Delta)_\CFL^s  u(x) = \text{PV} \int_{\Omega} \frac{u(x) -u(y)}{|x-y|^{n+2s}} \diff y.
	\end{equation*}
Note that the value of $u$ outside $\overline\Omega$ is irrelevant.
	This operator is also sometimes called regional fractional Laplacian.
Again,  hypotheses \eqref{eq:G estimate}, \eqref{eq:G delta Linf to delta C} and \eqref{eq:S submarkovian} were  already checked in \cite{bonforte+figalli+vazquez2018}, and  \eqref{eq:Green normal} in \cite{abatangelo+gc+vazquez2019}.  The exponents are given by  $s\in(\frac12,1)$, $\gamma = 2s - 1\in(0,1)$.
Recently, it has been shown in \cite{Chen2020} that the case $s \in (0,\frac 1 2]$ does not admit viscosity solution, and this suggests that no natural analogous problem exists in this range.
In \cite{Chen2009} the authors prove there exists a heat kernel  $\p$ that  satisfies the estimates
\begin{equation*}
\p (t,x,y) \asymp
\begin{dcases}
t^{-\frac{n}{2s}} \left( 1 \wedge \frac{t^{\frac{1}{2s}}}{|x-y|} \right)^{n + 2s} \left( 1 \wedge \frac{\delta(x)}{t^{\frac{1}{2s}}} \right)^{2s-1 } \left( 1 \wedge \frac{\delta(y)}{t^{\frac{1}{2s}}} \right)^{2s-1 },  			& t < T, \\
e^{-\lambda_1 t} \delta(x)^{2s-1} \delta(y)^{ 2s-1},  & t \ge T.
\end{dcases}
\end{equation*}
Using it with $T=t/2$, we know in particular that $\p(t,x,y)\overset {t} \asymp \delta(x)^{2s-1}\delta(y)^{2s-1}$, therefore \eqref{eq:S boundary regularity} holds and we have that
	\[
    \D_{2s-1}\p(t,\zeta, y)
	\overset T \asymp
	\begin{dcases}
	t^{-\frac {n+2s-1}{2s}}
    \left( 1 \wedge \frac{t^{\frac 1 {2s}}}{|\zeta-y|}
    \right)^{n + 2s}
    \left( 1 \wedge \frac{\delta(y)}{t^{\frac{1}{2s}}}
    \right)^{2s-1},  			& t < T, \\
	e^{-\lambda_1 t} \delta(y)^{2s-1},
	& t \ge T,
	\end{dcases}
	\]
as well as
	\begin{equation*}
\frac{ \Semi(t) [u_0 ](x) } {\delta(x)^{2s-1}} \asympT \int_\Omega u_0 (y) \delta(y)^{2s-1} \diff y.
\end{equation*}
so the optimal set of data is $L^1 (\Omega, \delta^{2s-1})$.
In this setting, even a one-sided Weyl's law was not known.

\subsection{Exploring new  examples}

Note that the integro-differential operator $\Ls$ can be reconstructed from the Green's function $\G$ whenever \eqref{eq:G estimate} holds. Indeed, if \ $0\leq\G\leq C|x-y|^{-(n-2s)}$, then $\Ls$ can be shown to have a discrete spectrum consisting of a non-decreasing divergent sequence of positive Dirichlet eigenvalues $(\lambda_m)_{m\geq 1}$, with corresponding eigenfunctions $(\varphi_m)_{m\geq 1}$ that form an orthonormal basis of $L^2(\Omega)$ (see for instance \cite[Remark 2.3]{chan+gc+vazquez2020} and \cite{bonforte+figalli+vazquez2018}). Then $\Ls$ can be recovered spectrally as
\[
\Ls u(x)
=\sum_{m=1}^{\infty}
    \lambda_m
    \angles{u,\varphi_m}
    \varphi_m(x),
\qquad\forall u\in H_{\Ls}^1(\Omega).
\]

\begin{itemize}
\item
Under \eqref{eq:G estimate}, $\G$ is not necessarily continuous:
\[
\G(x,y)=|x-y|^{-(n-2s)} \left( \frac{\delta(x) \delta(y)}{|x-y|^2} \wedge 1 \right)^{\gamma}
\left(1+\chi_{A}(x,y)\right),
\]
where $A\subset \Omega\times\Omega$ is any non-empty proper subset.
We do not know if \eqref{eq:G delta Linf to delta C} holds, but it does not seem natural.

\item
When $\G$ is continuous, \eqref{eq:Green normal} does not necessarily hold \cite{abatangelo+gc+vazquez2019}:
\[
\G(x,y)=|x-y|^{-(n-2s)} \left( \frac{\delta(x) \delta(y)}{|x-y|^2} \wedge 1 \right)^{\gamma}
\left(
2+\sin\dfrac{1}{\delta(x)}
\right)
\left(
2+\sin\dfrac{1}{\delta(y)}
\right).
\]
\item
Under all the assumptions \eqref{eq:G estimate}, \eqref{eq:G delta Linf to delta C}, \eqref{eq:S submarkovian} and \eqref{eq:Green normal}, $\Green$, or equivalently $\Semi(t)$, does not necessarily regularizes beyond $\delta^\gamma C(\overline\Omega)$:
\[
\G^{(k)}(x,y)=|x-y|^{-(n-2s)} \left( \frac{\delta(x) \delta(y)}{|x-y|^2} \wedge 1 \right)^{\gamma}
\left(
2+\sin \exp^{(k)} \dfrac{1}{|x-y|^2}
\right).
\]
where $\exp^{(k)}$ denotes the $k$-fold composition of the exponential function. In this case, even the eigenfunctions are not expected to be H\"{o}lder continuous or even continuous in any reasonably quantitative way.
\end{itemize}

\section{Comments, extensions and open problems}

\begin{itemize}

\item A similar theory should hold when $2s\ge  n=1$. In this case, along the diagonal the Green's function is logarithmically singular when $s=\frac12$ and  is regular when $s\in(\frac12,1)$.

\item It seems reasonable to expect that \eqref{eq:Green normal} can be recovered from the rest of the information. The approximations
\begin{equation*}
	\G_k (x,y) = \int_{\frac 1 k}^k \p (t,x,y) \diff t
\end{equation*}	
converge to $\G$ from below and $\G_k (x,y) / \delta(x)^\gamma$ is continuous at the boundary.  It seems reasonable $y$ fixed the convergence should be uniform near the boundary since
\begin{equation*}
	\left \|  \frac{ \G_k (\cdot, y) } {\delta^\gamma } - \frac{ \G (\cdot, y) } {\delta^\gamma }  \right  \|_{L^\infty (U)} \le \int_0^{\frac 1 k} 	\left \|  \frac{ \p (t,\cdot, y) } {\delta^\gamma }  \right  \|_{L^\infty (U)} \diff t + \int_k^{\infty} 	\left \|  \frac{ \p (t,\cdot, y) } {\delta^\gamma }  \right  \|_{L^\infty (U)} \diff t.
\end{equation*}
The second integral is controlled due to \Cref{thm:S boundary reg}. We expect the first to be controlled since for $x \ne y$ formally $\p (0,x,y) = 0$. However, we have not found a rigorous proof of this fact.
	
\item The heat kernel estimates in \Cref{thm:S estimate} for small time is still suboptimal, in comparison to the model examples. According to its proof, one obtains the sharp small-time upper bound away from the diagonal and up to the boundary provided that the following weighted Hardy--Littlewood--Sobolev inequality holds,
    \[
    \int_{\Omega}\int_{\Omega}
        \dfrac{
            \varphi_1(x)
            g(x)
            g(y)
            \varphi_1(y)^{-1}
        }{
            |x-y|^{n-2s}
        }
    \diff y\diff x
    \leq
    \norm[L^{\frac{2n}{n+2s}}(\Omega)]{g},
    \]
    for all $g\in L^{\frac{2n}{n+2s}}(\Omega)$. Its validity would imply
    \[
    \p(t,x,y)
    \leq Ct^{\frac{n}{2s}}\delta(x)^\gamma\delta(y)^\gamma,
        \qquad \forall t>0,\,x,y\in\Omega.
    \]

\item A second approach to heat kernel estimates for small time is the following. When $\Ls$ is sectorial, the contour integral formulation relates the heat kernel to the resolvent and hence to its Green's function. %
    For this, one would relate the hypotheses, mainly \eqref{eq:G estimate}, to  two-sided estimates for the resolvent. Let $\G_\lambda$ be the Green function of $(L+\lambda)^{-1}$. Then $v(x)=v_y(x)=\G_\lambda(x,y)-\G(x,y)$ satisfies
    \[
    \Ls v+\lambda v=-\lambda G(\cdot,y)
    \in L^p(\Omega),
    \]
    for some (small) $p>1$. Thus \cite{chan+gc+vazquez2020} implies the existence of $v$ which is less singular than $G$ and we should get
    \[
    G_\lambda(x,y)\asymp G(x,y)
        \qquad \forall x\neq y\in\Omega.
    \]
    One way to make it rigorous would be to pass to the weak-dual formulation. Notice that another argument is needed to show a suitable decay when $\lambda$ is large.

\item The full Weyl's law \eqref{eq:Weyl full}, with an upper bound as well as the exact asymptotic constant, is yet to be obtained for general operators. In fact, it is not very convenient to use Fourier analytic techniques in our setting where assumptions are mainly made on the Green's function of $\Ls$. A related interesting problem is to generalize the 1956  theorem by Payne, P\'{o}lya and Weinberger on the spectral gaps \cite{Payne-Polya-Weinberger1956} (see also \cite[Chapter III, \S 7]{Schoen1994}): the Dirichlet eigenvalues $(\lambda_k)_{k=1}^{\infty}$ of the classical Laplacian in a bounded domain $\Omega\subset\re^n$ satisfy $\lambda_{k+1}-\lambda_k\leq \frac{4}{nk}\sum_{i=1}^{k}\lambda_i$. An extension of this estimate into the fractional setting would give some sort of control on the eigenvalue growth. 

\item A possible continuation of this work is the study of the $\Ls$ with lower-order terms, including Schr\"{o}dinger operators.

\item  Another continuation of this work is the study of fractional powers of the heat operator, or time-fractional equations. This kind of operators is  not yet much studied, but one can expect that once the regularity properties are known, our framework will provide a linear theory in a general setting.
\end{itemize}

\section*{Acknowledgments}
HC has received funding from the European Research Council under the Grant
Agreement No 721675. The research of  DGC and JLV was partially supported by grant PGC2018-098440-B-I00 from the Ministerio de Ciencia, Innovación y Universidades (MICINN) of the Spanish Government. JLV is an Honorary Professor at Univ.\ Complutense. We would like to thank Gerd Grubb for her valuable comments on the first draft of the manuscript.

\printbibliography
	\end{document}